\theoremstyle{plain}
\numberwithin{equation}{section}
\newtheorem{theorem}{Theorem}[section]
\newtheorem{lemma}[theorem]{Lemma}
\newtheorem{proposition}[theorem]{Proposition}
\newtheorem{corollary}[theorem]{Corollary}
\newtheorem{definition}[theorem]{Definition}
\theoremstyle{remark}
\newtheorem{remark}[theorem]{Remark}
\newtheorem*{claim*}{Claim}
\newtheorem*{example*}{Example}
\newtheorem{example}[theorem]{Example}
\newtheorem*{remark*}{Remark}
\newcommand{\R}{\mathbf{R}}
\newcommand{\Rn}{\R^n}
\newcommand{\Sphere}{\mathbf{S}}
\newcommand{\Ball}{\mathbf{B}}
\DeclareMathOperator{\vol}{vol}
\newcommand{\graph}{\mathop{\rm Graph}}
\newcommand{\antigraph}{\mathop{\rm Antigraph}}
\newcommand{\dom}{\mathop{\rm Dom}}
\newcommand{\ran}{\mathop{\rm Ran}}
\newcommand{\diam}{\mathop{\rm Diam}}
\newcommand{\dist}{\mathop{\rm dist}}
\newcommand{\intr}{\mathop{\rm int}}
\newcommand{\cl}{\mathop{\rm cl}}
\newcommand{\loc}{{\mathop{\rm loc}}}
\newcommand{\Lip}{{\mathop{\rm Lip}}}
\newcommand{\normal}{{\hat n}}
\newcommand{\I}{{\tt I}}
\newcommand{\0}{{0}}
\newcommand\cexp{\hbox{$c$-Exp}}
\newcommand\Exp{\hbox{-Exp}}
\newcommand\cdexp{\hbox{$c^*$-Exp}}
\newcommand\blank{}
\newcommand\el{l}
\begin{document}

\title[Continuity, curvature, and optimal transportation]
{Continuity, curvature, and the general covariance of optimal transportation}

\author{Young-Heon Kim and Robert J. McCann
}

\address{Department of Mathematics, University of Toronto\\
  Toronto, Ontario Canada M5S 2E4}

\email{yhkim@math.toronto.edu, mccann@math.toronto.edu
}
\date{\today}

\thanks{This research was supported in part by Natural Sciences and Engineering
Research Council of Canada Grant 217006-03 and United States National Science Foundation
Grant DMS-0354729.
\\ \indent
\copyright 2007 by the authors.
}

\subjclass[2000]{49N60, 35J70, 58E17, 90B06}

\begin{abstract}
Let $M$ and $\bar M$ be $n$-dimensional manifolds equipped with suitable Borel
probability measures $\rho$ and $\bar \rho$.  Ma, Trudinger \& Wang gave
sufficient
conditions on a transportation cost $c \in C^4(M \times \bar M)$ to guarantee
smoothness of the optimal map pushing $\rho$ forward to $\bar \rho$;
the necessity of these conditions was deduced by Loeper. The present manuscript
shows the form of these conditions to be largely dictated by the covariance of the question;
it expresses them via non-negativity of the sectional curvature
of certain null-planes in a novel but natural pseudo-Riemannian
geometry which the cost $c$ induces on the product space $M \times \bar M$.

H\"older continuity of optimal maps was established for rougher
mass distributions by Loeper,  still relying on a key result of Trudinger \& Wang
which required certain structure on the domains and the cost.
We go on to develop this theory 
for mass distributions on
differentiable
manifolds
--- recovering Loeper's Riemannian examples such as the round sphere as particular cases ---
give a direct proof of the key result mentioned above,
and revise
Loeper's H\"older continuity argument to make it logically independent
of all earlier works, while
extending it to 
less restricted geometries and 
cost functions even for subdomains $M$ and $\bar M$ of $\Rn$.
We also give new examples of geometries satisfying the hypotheses --- obtained using
submersions and tensor products ---
and some connections to spacelike Lagrangian submanifolds in symplectic geometry.
\end{abstract}
\maketitle

\section{Introduction}

Let $M$ and $\bar M$ be Borel subsets of compact separable metric spaces,
in which their closures are denoted by $\cl M$ and $\cl \bar M$.
Suppose $M$ and $\bar M$ are equipped with
Borel probability measures $\rho$ and $\bar \rho$,
and let $c:\cl(M \times \bar M) \longrightarrow \R \cup \{+\infty\}$ be a lower semicontinuous transportation
cost defined on the product space.
The optimal transportation problem of Kantorovich \cite{Kantorovich42}
is to find the measure $\gamma \ge 0$ on $M \times \bar M$ which
achieves the infimum
\begin{equation}\label{Kantorovich}
W_c(\rho,\bar\rho)
:= \min_{\gamma \in \Gamma(\rho,\bar\rho)} \int_{M \times \bar M} c(x,\bar x) d\gamma(x,\bar x).
\end{equation}
Here $\Gamma(\rho,\bar\rho)$ denotes the set of joint probabilities having the same left and
right marginals as $\rho \otimes \bar\rho$.  It is not hard to check that this
minimum is 
attained;
any minimizing measure $\gamma \in \Gamma(\rho,\bar\rho)$ is then called {\em optimal}.
Each feasible $\gamma \in \Gamma(\rho,\bar\rho) $ can be thought of as a weighted relation
pairing points $x$ distributed like $\rho$ with points $\bar x$ distributed like $\bar\rho$;
optimality implies this pairing also minimizes the average value of the specified cost
$c(x,\bar x)$ of transporting each point $x$ to its destination $\bar x$.

The optimal transportation problem of Monge \cite{Monge81} amounts to finding a Borel map
$F:M \longrightarrow \bar M$, and an optimal measure $\gamma$ vanishing outside
$\graph(F) := \{(x,\bar x) \in M \times \bar M \mid \bar x=F(x)\}$.
When such a map $F$ exists,  it is called an {\em optimal map} between
$\rho$ and $\bar\rho$;   in this case,  the relation $\gamma$ is single-valued,
so that $\rho$-almost every point $x$ has a unique partner $\bar x=F(x)$, and
optimality can be achieved in (\ref{Kantorovich}) without subdividing the mass at such
points $x$ between different destinations. 
Although Monge's problem is more subtle to solve
than Kantorovich's,  when $M$ is a smooth manifold 
and $\rho$ vanishes on every Lipschitz submanifold of lower dimension,
a {\em twist} condition ({\bf (A1)}, Definition \ref{D:twist} below)
on the cost function $c(x,\bar x) $ guarantees existence and uniqueness of an optimal map $F$,
as well as uniqueness of the optimal measure $\gamma$; see Gangbo \cite{Gangbo95},
Gangbo \& McCann \cite{GangboMcCann96},
Carlier \cite{Carlier03}, Ma, Trudinger \& Wang \cite{MaTrudingerWang05},
or the reviews in 
\cite{ChiapporiMcCannNesheim07p} 
\cite{Villani06}.
One can then
ask about the smoothness of the optimal map $F:M \longrightarrow \bar M$.

For Euclidean distance squared $c(x,\bar x) = |x-\bar x|^2/2$, this regularity question was
resolved using geometric ideas by Caffarelli \cite{Caffarelli92} \cite{Caffarelli92b}
\cite{Caffarelli96b},  and also by Delano\"e in the plane \cite{Delanoe91} and by
Urbas in higher dimensions \cite{Urbas97}, who formulated it as an oblique boundary
value problem and applied the continuity method with a priori estimates.
Convexity of $\bar M \subset \Rn$ necessarily plays
a crucial role.  Delano\"e investigated regularity of optimal
transport with respect to Riemannian distance squared on a compact manifold \cite{McCann01},
but completed his program only for nearly flat manifolds \cite{Delanoe04}, an
improvement on Cordero-Erausquin's result from the torus \cite{Cordero-Erausquin99T},
though his criterion for nearness to flat depends on the measures $\rho$ and $\bar \rho$.
Under suitable conditions on $\rho$ and $\bar \rho$ and domains $M$ and $\bar M \subset \Rn$,
Ma, Trudinger \& Wang \cite{MaTrudingerWang05} \cite{TrudingerWang07p} developed estimates
and a continuity method approach to a class of cost functions $c \in C^4(M \times \bar M)$
which satisfy a mysterious structure condition comparing third and fourth derivatives.
Adopting the notation defined in the following section,  they express this condition
in the form
\begin{equation}\label{TWA3w}
\sum_{1 \le i,j \le n} (-c_{ij\bar k\bar \el} + c_{ij \bar a} c^{\bar a b} c_{\bar k \bar \el b}) c^{\bar k e} c^{\bar \el f}
p_i p_j q_{e} q_{f}
\ge C|p|^2|q|^2
\qquad {\rm if}\ p \perp q,
\end{equation}
for some constant $C \ge 0$ and each pair of orthogonal vectors $p, q \in \Rn$.
Here summation on $\bar a,b,e,f,\bar k,\bar \el$ is implicit but the sum
on $i,j$ is written explicitly for consistency with our later notation.
Loeper \cite{Loeper07p}
confirmed their structure condition to be necessary for continuity of $F$,
as well as being sufficient for its
smoothness \cite{MaTrudingerWang05} \cite{TrudingerWang07p} \cite{TrudingerWang08p}.
Loeper furthermore offered a
direct argument giving an explicit H\"older exponent for $F$,
largely avoiding the continuity method,
except that it relied on central results of Delano\"e, Loeper,
Ma, Trudinger, and Wang, to establish key technical lemmata
on compact Riemannian manifolds \cite{Delanoe04} \cite{MaTrudingerWang05}
such as the sphere~\cite{DelanoeLoeper06}, and on Euclidean domains \cite{TrudingerWang07p}.

Although the tensorial nature of condition (\ref{TWA3w}) was known
to both Trudinger and Loeper,  it was not emphasized in their
manuscripts.\footnote{Personal communications with Neil Trudinger and Gregoire Loeper.
This observation is made en passant in \cite{Trudinger06}, while
an assertion and proof of this fact has been included in the revised version
of \cite{Loeper07p},  which was communicated to Villani and recorded in \cite{Villani06}
independently of the present work.
Although Loeper was motivated to call the expression appearing in (\ref{TWA3w})
a {\em cost-sectional curvature} by his discoveries in the Riemannian setting
(Example \ref{E:Riemannian}),
it does not seem to have been noted by previous authors that the manifest covariance
of the regularity question largely dictates the form of its necessary and sufficient
answer,  by requiring this answer to
be expressible in terms of coordinate independent quantities such as
geodesics and curvature.}
Moreover, the Euclidean structure provides a deceptive identification between
vectors with covectors which may obscure the geometrical content of (\ref{TWA3w}),
by suggesting that the Euclidean orthogonality of two vectors may be relevant,
rather than orthogonality of a vector and covector pair.
On the other hand,  the Euclidean metric plays no role in the question being
addressed, namely the smoothness of maps $F:M \longrightarrow \bar M$ which are
optimal for the cost $c$.


Our present purpose is first to extend the theory of Loeper (and in principle,
of Ma, Trudinger \& Wang) to the transportation problem set on a pair of smooth manifolds
$M$ and $\bar M$, by finding a manifestly covariant expression of
Ma, Trudinger \& Wang's structure condition (\ref{TWA3w}), as the
sectional curvature non-negativity of certain null planes
in a pseudo-Riemannian metric on $M \times \bar M$
explored here for the first time; and second
to give an elementary and direct geometrical
proof of the key ingredients which Loeper requires,
Theorems \ref{T:second stab} and \ref{T:supporting c-convex} below,
logically independent of the methods and results of Delano\"e \cite{Delanoe04},
Delano\"e \& Loeper \cite{DelanoeLoeper06},
Ma, Trudinger \& Wang \cite{MaTrudingerWang05},
Trudinger \& Wang \cite{TrudingerWang07p},
or their subsequent work 
\cite{TrudingerWang08p}.
This makes Loeper's proof of H\"older continuity of optimal maps
self-contained,  including maps minimizing distance squared between
mass distributions whose Lebesgue densities satisfy bounds above for $\rho(\cdot)$
and below for $\bar\rho(\cdot)$ on the round sphere $M= \bar M = {\mathbf S}^n$.
As a byproduct of our approach,  we are able to relax various
geometric hypotheses on $M, \bar M$ and the cost $c$
required in previous works;  some of these relaxations were also obtained simultaneously
and independently by Trudinger \& Wang using a different approach \cite{TrudingerWang08p},
which we learned of while this paper was still in a preliminary form \cite{KimMcCann07}.

An important feature of our theory is in its geometric and global nature.
In combination with our results from \cite{KimFibration}, this allows us to
extend the conclusions of the key Theorems \ref{T:second stab} and \ref{T:supporting c-convex}
to new settings, such as the Riemannian distance squared on the product
$M = \bar M ={\mathbf S}^{n_1} \times \cdots \times  {\mathbf S}^{n_k} \times \R^l$
of round spheres, or Riemannian submersions thereof.
This is a genuine advantage of our work over previous
approaches \cite{TrudingerWang07p} \cite{TrudingerWang08p} \cite{Loeper07p};
see Example~\ref{E:new examples} and Remark~\ref{R:new results on products}


Since terms like $c$-convex, $c$-subdifferential,
and notations like $\partial^c u$ are 
used inconsistently
through the literature, and because we wish to recast the entire
conceptual framework into a pseudo-Riemannian setting, we often depart from
the notation and terminology developed by Ma, Trudinger, Wang and Loeper.
Instead,  we have tried to make the mathematics accessible
to a different readership, by choosing language intended to convey
the geometrical structure of the problem and its connection
to classical concepts in differential geometry not overly specialized
to optimal transportation or fully nonlinear differential equations.
This approach has the advantage of inspiring certain intuitions about the
problem which are quite distinct from those manifested in the previous approaches,
and has a structure somewhat reminiscent of symplectic or complex geometry.
Although we were initially surprised to discover that the intrinsic geometry
of optimal transportation is pseudo-Riemannian,  with hindsight we explain
why this must be the case, and make some connections to the theory of Lagrangian
submanifolds in the concluding remarks of the paper.

The outline of this paper is as follows.  In the next section we introduce
the pseudo-Riemannian framework and use it to adapt
the relevant concepts and structures from Ma, Trudinger \& Wang's
work on Euclidean domains to manifolds whose only geometric
structure arises from a cost function $c:M \times \bar M \longrightarrow \R \cup \{+\infty\}$.
Since Morse theory prevents a smooth cost from satisfying the desired hypothesis {\bf (A1)}
on a compact manifold, we deal from the outset with functions which may fail to be smooth on a
small set --- such as the cut-locus in the Riemannian setting \cite{McCann01} \cite{Loeper07p},
see Example \ref{E:Riemannian}, or the diagonal in the reflector antenna problem
\cite{GlimmOliker03} \cite{Wang04} \cite{CaffarelliGutierrezHuang},
see Example \ref{E:reflector antenna}.
This is followed by Section \ref{S:main results},
where we motivate and state the main theorem proved here: a version of Loeper's
global geometric characterization of (\ref{TWA3w}) which we call the
double-mountain above sliding-mountain maximum principle.  In the same section
we illustrate how this theorem and the pseudo-Riemannian framework shed new
light on a series of familiar examples from Ma, Trudinger, Wang and Loeper,
including those discussed above,  and new ones formed from these by quotients and tensor
products of, e.g., round spheres of different sizes, in Example \ref{E:new examples}.
Section~\ref{S:proofs} contains the proofs
which relate our definitions to theirs and establish the main theorem.
A level set argument is required to handle the more delicate case in which the
positivity in (\ref{TWA3w}) is not strict.
A last section offers some perspective on these results
and their connection to optimal transportation. We have included a series of
appendices which give a complete account of Loeper's theory of H\"older
continuity of optimal mappings,  illustrating how our main result makes
this theory self-contained,  and simplifying the argument at a few points.
In particular, we give a unified treatment of the Riemannian sphere and reflector
antenna problems,  using the fact that the mapping is continuous in the former
to deduce the fact that it avoids the cut-locus \cite{DelanoeLoeper06},
instead of the other way around.

It is a pleasure to thank Neil Trudinger, Xu-Jia Wang,
Gregoire Loeper, and Cedric Villani for many fruitful
discussions, and for providing us with preprints of their inspiring works.  Useful
comments were also provided by John Bland, Robert Bryant, Philippe Delano\"e,
Stamatis Dostoglou, Nassif Ghoussoub, Gerhard Huisken, Tom Ilmanen, Peter Michor,
Truyen Nguyen, and Micah Warren. We thank Jeff Viaclovsky for helpful references,  and
Brendan Pass for a careful reading of the manuscript. We are grateful to the Mathematical Sciences
Research Institute in
Berkeley, where parts of this paper were written,  and to Adrian Nachman, Jim Colliander, and
the 2006-07 participants Fields Analysis Working Group, for the stimulating environment which
they helped to create.

\section{Pseudo-Riemannian framework}
\label{S:pseudo-Riemannian}

Fix manifolds $M$ and $\bar M$ which, if not compact, are continuously embedded in separable
metrizable spaces where their closures $\cl M$ and $\cl \bar M$ are compact.
%
Equip $M$ and $\bar M$ with Borel probability measures $\rho$ and $\bar \rho$, and
a lower semicontinuous cost function $c:\cl(M \times \bar M) \longrightarrow \R \cup \{+\infty\}$,
and a subdomain $N \subset M \times \bar M$ of the product manifold. Visualize the relation
$N$ as a multivalued map and denote its inverse by $N^* := \{(\bar x,x) \mid (x,\bar x) \in N\}$.
We call $\bar N(x) := \{ \bar x \in \bar M \mid (x,\bar x) \in N\}$ the
set of destinations {\em visible} from $x$,  and
$N(\bar x) := \{ x \in M \mid (x,\bar x) \in N\}$ the set of sources
{\em visible} from $\bar x$.  We define the reflected cost $c^*(\bar x,x):=c(x,\bar x)$
on $\bar M \times M$.
In local coordinates $x^1,\ldots,x^n$ on $M$ and $x^{\bar 1},\ldots,x^{\bar n}$
on $\bar M$,  we use the notation such as $c_i = \partial c/\partial x^i$ and
$c_{\bar i} =\partial c/\partial x^{\bar i}$ to denote the partial derivatives
$Dc = (c_1, \ldots, c_n)$ and $\bar D c = (c_{\bar 1},\ldots, c_{\bar n})$ of the cost,
and $c_{i \bar j} = \partial^2 c/\partial x^{\bar j}\partial x^{i}$
to denote the mixed partial derivatives,  which commute with each other and form
the coefficients in the $n \times n$ matrix $\bar D D c$.
When $c_{i \bar j}$ is invertible its inverse matrix will be denoted by $c^{\bar j k}$.
The same notation is used for tensor indices,  with repeated indices being summed from
$1$ to $n$ (or $n+1$ to $2n$ in the case of barred indices), unless otherwise noted.

Let $T_x M$ and $T_x^* M$ denote the tangent and cotangent spaces to $M$ at $x$.
Since the manifold $N \subset M \times \bar M$ has a product structure,  its tangent
and cotangent spaces split canonically: $T_{(x,\bar x)} N = T_x M \oplus T_{\bar x} \bar M$
and $T^*_{(x,\bar x)} N = T^*_x M \oplus T^*_{\bar x} \bar M$.  For $c(x,\bar x)$ sufficiently smooth,
this canonical splitting of the one-form $dc$ will be denoted by
$dc =  D c \oplus \bar D c$.
Similarly,  although
the Hessian of $c$ is not uniquely defined until a metric has been selected on $N$,
the cross partial derivatives $\bar D D c$ at $(x,\bar x) \in N$ 
define an unambiguous linear map from vectors at $\bar x$ to covectors at $x$;
the adjoint $(\bar D D c)^\dagger = D \bar D c$ of this map takes
$T_{x} M$ to $T^*_{\bar x} \bar M$.  Thus
\begin{equation}\label{metric}
h := \frac{1}{2}\left(
\begin{array}{cc}
0 & -\bar D D c \\
-D \bar D c & 0
\end{array}
\right)
\end{equation}
gives a symmetric bilinear form on the tangent space $T_{(x,\bar x)} N$ to the
product.
Let us now adapt the assumptions of Ma, Trudinger \& Wang \cite{MaTrudingerWang05}
\cite{TrudingerWang07p} to manifolds:\\
${\bf (A0) (Smoothness)}\ c \in C^4(N).$

\begin{definition}\label{D:twist}{\bf (Twist condition)}
A cost $c \in C^1(N)$ is called {\em {twisted}} if \\
{\bf (A1)} for all $x \in M$ the map $\bar x \longrightarrow -  D c(x,\bar x)$ from
$\bar N(x) \subset \bar M$ to $T^*_{x} M$ is injective.\\
If $c$ is twisted on $N \subset M \times \bar M$ and $c^*(\bar x,x) = c(x,\bar x)$
is twisted on $N^*=\{(\bar x, x)\mid (x,\bar x) \in N\}$ we say $c$ is {\em bi-twisted}.
\end{definition}

\begin{definition}\label{D:non-degenerate}{\bf (Non-degeneracy)}
A cost $c \in C^2(N)$ is {\em non-degenerate} if \\
{\bf (A2)} for all $(x,\bar x)\in N$ the linear map $\bar D D c : T_{\bar x} \bar M \longrightarrow T^*_{x} M$
is bijective.
\end{definition}

Though {\bf (A1)} will not be needed until the appendices (and must extend
to $\cl N$ there),
for suitable probability measures $\rho$ and $\bar \rho$ on $M$ and $\bar M$  the
twist condition  alone is enough to guarantee the Kantorovich
infimum (\ref{Kantorovich}) is uniquely attained,  as well as existence of an
optimal map $F:M \longrightarrow \bar M$
\cite{Gangbo95} \cite{Carlier03} \cite{MaTrudingerWang05},
as reviewed in \cite{Villani06} \cite{ChiapporiMcCannNesheim07p}.
It implies the dimension of $\bar M$ cannot exceed that of $M$,  while {\bf (A2)}
forces these two dimensions to coincide.
The non-degeneracy condition {\bf (A2)}
ensures the map $\bar x \longrightarrow -  D c(x,\bar x)$
acts as a local diffeomorphism from $\bar N(x) \subset \bar M$ to a subset of $T_x^* M$
(which becomes global if the cost is twisted, in which case its inverse is
 called the \emph{cost-exponential} \cite{Loeper07p}, Definition \ref{c-exp} below),
and that $h(\cdot,\cdot)$ defined
by (\ref{metric}) is a non-degenerate symmetric
bilinear form on $T_{(x,\bar x)} N$. Although $h$ is not positive-definite,  it defines a
pseudo-Riemannian metric on $N$, which might also be denoted by
$d\ell^2 = - c_{i \bar j} dx^i dx^{\bar j}$.  The signature of this metric is zero, 
since in any choice of coordinates on $M$ and $\bar M$,
the eigenvalues of $h$ come in $\pm \lambda$ pairs due to
the structure (\ref{metric}); the corresponding eigenvectors are
$p \oplus \bar p$ and $(-p) \oplus \bar p$ in $T_{(x,\bar x)}N = T_x M \oplus T_{\bar x}\bar M$.
Non-degeneracy ensures there are no zero eigenvalues.  A vector 
is called {\em null} if $h(p\oplus \bar p,p \oplus \bar p)=0$.
A submanifold $\Sigma \subset N$ is called {\em null} if all its tangent vectors
are null vectors, and {\em totally geodesic} if each geodesic curve tangent
to $\Sigma$ at a point is contained in $\Sigma$ locally.  The submanifolds
$\{x\} \times \bar N(x)$ and $N(\bar x) \times \{\bar x\}$
are examples of null submanifolds in this geometry, and will turn out to be totally geodesic.
Assuming $c \in C^4(N)$,  we can use the Riemann curvature tensor ${R_{i'j'k' \el'}}$
induced by $h$ on $N$ to define the sectional curvature of a two-plane $P \wedge Q$
at $(x,\bar x) \in N$:
\begin{equation}\label{sectional curvature}
\sec_{(x,\bar x)} P \wedge Q = \sec_{(x,\bar x)}^{(N,h)} P \wedge Q
= 
\sum_{i'=1}^{2n} \sum_{j'=1}^{2n} \sum_{k'=1}^{2n} \sum_{\el'=1}^{2n}
R_{i'j'k' \el'} P^{i'} Q^{j'} P^{k'} Q^{\el'}.
\end{equation}
In this geometrical framework,  we reformulate the mysterious structure condition
(\ref{TWA3w}) of Ma, Trudinger \& Wang \cite{MaTrudingerWang05} \cite{TrudingerWang07p}
from the Euclidean setting,
which was necessary for continuity of optimal maps \cite{Loeper07p}
and sufficient for regularity \cite{MaTrudingerWang05} \cite{TrudingerWang07p}.
The reader is able to recover their condition
from ours by computing the Riemann curvature tensor (\ref{Riemann curvature}).
Note that we do not normalize our sectional curvature definition
(\ref{sectional curvature}) by dividing by the customary quantity $h(P,P) h(Q,Q) - h(P,Q)^2$,
since this quantity vanishes in the case of most interest to us,
namely $P = p \oplus \0$ orthogonal to $Q = \0 \oplus \bar p$, which means
$p \oplus \bar p$ is null.

\begin{definition}{\bf (Regular costs and cross-curvature)}
A cost $c \in C^4(N)$ is {\em weakly regular}
on $N$ if it is non-degenerate and for every $(x,\bar x) \in N$,\\
{\bf (A3w)} $\sec_{(x,\bar x)} (p \oplus \0) \wedge (\0 \oplus \bar p) \ge 0$
for all null vectors $p \oplus \bar p \in T_{(x,\bar x)} N$.\\
A weakly regular cost function is \emph{strictly regular} on $N$
if equality in {\bf (A3w)} implies $p =\0$ or $\bar p =\0$, in which case
we say {\bf (A3s)} holds on $N$.  We refer to the quantity appearing in
(\ref{full cross-curvature}) as the {\em cross-curvature},  and
say a weakly regular cost --- and the pseudo-metric (\ref{metric}) it induces on $N$ --- are
{\em non-negatively cross-curved} if
\begin{equation}\label{full cross-curvature}
\sec_{(x,\bar x)} (p \oplus \0) \wedge (\0 \oplus \bar p) \ge 0
\end{equation}
holds for all $(x,\bar x) \in N$ and $p\oplus \bar p \in T_{(x,\bar x)} N$, not necessarily null.
The cost $c$ and geometry $(N,h)$ are said to be
\emph{positively cross-curved} if, in addition, equality in
(\ref{full cross-curvature}) implies $p=\0$ or $\bar p =\0$.
\end{definition}

If $\cl M \subset\subset M'$ and $\cl \bar M \subset\subset {\bar M'}$ are
contained in larger manifolds
and {\bf (A0), (A2)} and {\bf (A3s/w)} all hold on some neighbourhood
$N' \subset M' \times {\bar M'}$ containing $N \subset \subset N'$
compactly, we say $c$ is strictly/weakly regular on $\cl N$. 
If, in addition {\bf (A1)} holds on $N'$,
we say $c$ is twisted on $\cl N$. 

The nullity condition on $p \oplus \bar p$ distinguishes weak regularity of the cost
from non-negative cross-curvature: this distinction is important in
Examples \ref{E:reflector antenna} and \ref{E:new examples}
among others; see also Trudinger \& Wang \cite{TrudingerWang07p}.
Non-negative cross-curvature is in turn a weaker condition
than $\sec^{(N,h)} \ge 0$, which means
$\sec_{(x,\bar x)} (p \oplus \bar q) \wedge (q \oplus \bar p) \ge 0$ for all
$(x,\bar x) \in N$ and $p \oplus \bar q, q\oplus \bar p \in T_{(x,\bar x)}N$.
As a consequence of Lemma \ref{L:curvature tensor},
and due to the special form of the pseudo-metric,
$\sec^{(N,h)} \ge 0$ is equivalent to requiring
non-negativity of the cross-curvature operator as a quadratic form on the vector space $T_xM \wedge T_{\bar x}\bar M$, i.e.,
\begin{equation}\label{equivalent to sectional curvature non-negativity}
R_{i\bar j k \bar \el} (p^i p^{\bar j} - q^i q^{\bar j})(p^k p^{\bar \el} - q^k q^{\bar \el})
\ge 0.
\end{equation}


\begin{example}[Strictly convex boundaries]\label{E:convex graphs}
Let $\Omega \subset \R^{n+1}$ and $\Lambda \subset \R^{n+1}$ be bounded convex
domains with $C^2$-smooth boundaries.   Set $M = \partial \Omega$,
$\bar M =\partial \Lambda$, and $c(x,\bar x) = |x-\bar x|^2/2$.
We claim the pseudo-metric (\ref{metric}) is non-degenerate and that $\sec^{(N,h)} \ge 0$
on $N:= \{ (x,\bar x) \in \partial \Omega \times \partial \Lambda \mid
\normal_\Omega(x) \cdot \normal_\Lambda(\bar x) >0 \}$, where
$\normal_\Omega(x)$ denotes the outer normal to $\Omega$ at $x$.
Indeed, fixing $(x, \bar x) \in N$,  parameterize $M$ near $x$
as a graph $X \in \Rn \longrightarrow (X,f(X)) \in \partial \Omega$
over the hyperplane orthogonal to $\normal_\Omega(x) + \normal_\Lambda(\bar x)$,
and $\bar M$ near $\bar x$ by a convex graph $\bar X \in \Rn \longrightarrow (\bar X,g(\bar X))$ over the
same hyperplane.  This choice of hyperplane guarantees $|\nabla f(X)|<1$ and
$|\nabla g(\bar X)|<1$ nearby,  so in the canonical coordinates 
and inner product on $\R^n$,  {\bf (A2)--(A3w)} follow from a computation of
Ma, Trudinger \& Wang \cite{MaTrudingerWang05} which yields the cross-curvature
\begin{equation}\label{convex boundary cross-curvature}
\sec^{(N,h)}_{(x,\bar x)} (p \oplus \0) \wedge (\0 \oplus \bar p)
= (p^i f_{ik} p^k) (p^{\bar j} g_{\bar j \bar l} p^{\bar l})/(2+ 2 \nabla f \cdot \nabla g)
\ge 0.
\end{equation}
In fact, we can also deduce the stronger conclusion
$\sec^{(N,h)} \ge 0$ as in (\ref{equivalent to sectional curvature non-negativity}):
\begin{align}
&(2+ 2 \nabla f \cdot \nabla g) \;
\sec^{(N,h)}_{(x,\bar x)}
(p \oplus \bar q) \wedge (q \oplus \bar p)
\nonumber \\&= \langle p D^2 f p\rangle \langle \bar p D^2 g \bar p \rangle
             + \langle q D^2 f q\rangle \langle \bar q D^2 g \bar q \rangle
            -2 \langle p D^2 f q\rangle \langle \bar p D^2 g \bar q \rangle
\nonumber \\&\ge \left(\sqrt{\langle p D^2 f p\rangle \langle \bar p D^2 g \bar p \rangle}
           - \sqrt{\langle q D^2 f q\rangle \langle \bar q D^2 g \bar q \rangle} \right)^2.
\label{CS complete square}
\end{align}
Noting $\normal_\Omega((X,f(X))) = (\nabla f(X),-1)$, Ma, Trudinger \& Wang's
computation shows nondegeneracy {\bf (A2)} fails at the boundary of $N$
where $\normal_\Omega(x) \cdot \normal_\Lambda(\bar x) =0$ implies
the denominator of (\ref{convex boundary cross-curvature}) is zero.
Gangbo \& McCann \cite{GangboMcCann00} showed the cost is twisted on $N$ 
provided $\Lambda$ is strictly convex, but
cannot be twisted on any larger domain in
$M \times \bar M$.
If both $\Omega$ and $\Lambda$ are $2$-uniformly convex,
meaning that the Hessians $D^2 f$ and $D^2 g$ are positive definite,
the conditions for equality in (\ref{CS complete square})
show the sectional curvature of $(N,h)$ to be positive.
  The resulting strict regularity {\bf (A3s)}
underlies Gangbo \& McCann's proof of continuity for each of the multiple mappings which
--- due to the absence of twisting {\bf (A1)} --- are required to
support the unique optimizer $\gamma \in \Gamma(\rho,\bar \rho)$ in this geometry.
Here the probability measures $\rho$ and $\bar \rho$
are assumed mutually absolutely continuous with respect to surface measure on $\Omega$ and $\Lambda$,
both having densities bounded away from zero and infinity.  For contrast,  observe
in this case that the same computations show that although nondegeneracy {\bf (A2)} also
holds on the set $N_-:= \{ (x,\bar x) \in \partial \Omega \times \partial \Lambda \mid
\normal_\Omega(x) \cdot \normal_\Lambda(\bar x) <0 \}$, this time both surfaces can
be expressed locally as graphs over $\normal_\Omega(x) - \normal_\Lambda(\bar x)$
but {\bf (A3w)} fails at each point $(x,\bar x) \in N_-$: indeed, the cross-curvatures
of $N_-$ are all negative because $D^2 f >0 > D^2 g$ have opposite signs.
\end{example}

Let us now exploit the geodesic structure which the pseudo-metric $h$ induces on
$N \subset M \times \bar M$ to recover Ma, Trudinger \& Wang's notions concerning
$c$-convex domains \cite{MaTrudingerWang05} in our setting.

\begin{definition}\label{D:c-convexity}
{\bf (Notions of convexity)}
A subset $W \subset N \subset M \times \bar M$ is {\em geodesically convex} if each
pair of points in $W$ is linked by a curve in $W$ satisfying the geodesic equation on $(N,h)$.
This definition is extended to subsets $W \subset \cl N$ by allowing geodesics
in $N$ which have endpoints on $\partial N$.
We say $\bar \Omega \subset \cl \bar M$ {\em appears convex} from $x \in M$
if $\{x\} \times \bar \Omega$ is geodesically convex 
and $\bar \Omega \subset \cl \bar N(x)$.
We say $W \subset M \times \bar M$ is {\em vertically convex} if
$\bar W(x) := \{\bar x \in \bar M \mid (x,\bar x) \in W \}$ appears convex from $x$ for each
$x \in M$.  
We say $\Omega \subset \cl M$ {\em appears convex} from $\bar x \in \bar M$
if $\Omega \times \{\bar x\}$ is geodesically convex 
and $\Omega \subset \cl N(\bar x)$.
We say $W \subset M \times \bar M$ is {\em horizontally convex} if
$W(\bar x) := \{x\in M \mid (x,\bar x) \in W\}$ appears
convex from $\bar x$ for each $\bar x \in \bar M$. If $W$ is both vertically and horizontally convex,
we say it is {\em bi-convex}.
\end{definition}

For a non-degenerate twisted cost {\bf (A0)--(A2)},
Lemma \ref{L:c-segments are geodesics} shows $\bar \Omega \subset \bar N(x)$
appears convex from $x$ if and only if $Dc(x,\bar \Omega)$ is convex in $T^*_x M$;
similarly for a bi-twisted cost
$\Omega \subset N(\bar x)$ appears convex from $\bar x$ if and only if
$\bar Dc(\Omega,\bar x)$ is convex in $T^*_{\bar x} \bar M$.
This leads immediately to notions of apparent {\em strict} convexity, and
apparent {\em uniform} convexity for such sets, and shows our definition of {\em apparent}
convexity is simply an adaptation to manifolds of the
$c$-convexity and $c^*$-convexity of Ma, Trudinger \& Wang \cite{MaTrudingerWang05}:
$\Omega$ is $c^*$-convex in their language with respect
to $\bar x$ if it appears convex from $\bar x$;
$\bar \Omega$ is $c$-convex with respect $x$ if it appears convex
from $x$;  and $\Omega$ and $\bar \Omega$ are $c^*$- and $c$-convex with respect to
each other if $N=\Omega \times \bar\Omega \subset \R^{2n}$ is bi-convex,
meaning  $Dc(x,\bar N(x)) \subset T^*_x M$ and $\bar Dc(N(\bar x),\bar x) \subset T^*_{\bar x}\bar M$
are convex domains for each $(x,\bar x) \in N$.

\begin{remark}\label{R:one dimension}
In dimension $n=1$,  strict regularity {\bf (A3s)} follows vacuously
from non-degeneracy {\bf (A2)}, since $p \oplus \bar p$ null implies
$p=\0$ or $\bar p=\0$.  For this reason we generally discuss $n \ge 2$ hereafter.
Note however, for $n=1$ and $c \in C^4(N)$ non-degenerately twisted,
$N$ is bi-convex if and only if $N(\bar x)$ and $\bar N(x)$ are homeomorphic
to intervals.  In local coordinates $x^1$ and $\bar x^1$ on $N$,
non-degeneracy implies $c_{1\bar 1} = \mp e^{\pm \lambda(x^1,\bar x^1)}$.
Comparing
$
-c_{11\bar 1\bar 1} + c_{11\bar 1} c^{\bar 1 1} c_{1 \bar 1 \bar 1} = \lambda_{1 \bar 1} |c_{1 \bar 1}|
$
with (\ref{Riemann curvature}) shows
\begin{equation}\label{1d costs}
c(x^1,\bar x^1)= \mp \int_{x_0}^{x^1}\int_{\bar x_0}^{\bar x^1} e^{\pm \lambda(s,t)} ds dt
\end{equation}
induces a pseudo-metric $h$ on $N$ for which
$
\sec_{(x,\bar x)} (p \oplus \bar 0) \wedge (\0 \oplus \bar p)
$
has the same sign as $\partial^2 \lambda/\partial x^1 \partial \bar x^1$
whenever $p \ne \0 \ne \bar p$.  If $(N,h)$ is connected its cross-curvature
will have therefore a definite sign if $\lambda(x,\bar x)$ is non-degenerate, and a
semidefinite sign if $\lambda(x,\bar x)$ is twisted.  Moreover, the sign of the cross-curvature,
sectional curvature,  and curvature operator all coincide on a product of one-dimensional
manifolds, although this would not necessarily be true on a product of surfaces or
higher-dimensional manifolds.
\end{remark}


\begin{definition}{\bf ($c$-contact set)}
Given $\Omega \subset \cl M$, $u:\Omega \longrightarrow \R \cup \{+\infty\}$, and
$c:\cl(M \times \bar M) \longrightarrow \R \cup \{+\infty\}$,
we define  $\dom c := \{ (x,\bar x) \in \cl(M \times \bar M) \mid c(x,\bar x)<\infty \}$,
the $c$-contact set $\blank\partial_\Omega^c u (x)
:= \{ \bar x \in \cl \bar M \mid (x,\bar x) \in \partial_\Omega^{c} u\}$,
and $\blank\partial^c u= \blank\partial^c_{\cl M} u$,
where
\begin{equation}\label{c-subdifferential}
\blank\partial_\Omega^{c}u := \{(x,\bar x) \in \dom c \mid
u(y) + c(y,\bar x) \ge u(x) + c(x,\bar x) \ {\rm for\ all}\ y \in \Omega\}.
\end{equation}
We define $\dom \partial^c_\Omega u := \{ x \in \Omega \mid \partial^c_\Omega u(x) \ne \emptyset \}$
and $\dom \partial^c u := \dom \partial^c_{\cl M} u$.
\end{definition}

\section{Main Results and Examples}
\label{S:main results}

A basic result of Loeper \cite{Loeper07p}
states that a cost satisfying {\bf (A0)-(A2)} on a bi-convex domain
$N = M \times \bar M \subset \R^{n} \times \R^n$ is
weakly regular {\bf (A3w)} if and only if
$\blank\partial^c u(x)$ appears convex from $x$ for each function
$u:M \longrightarrow \R \cup \{+\infty\}$ and each $x \in M$.
His necessity argument is elementary and direct,  but for sufficiency he appeals
to a result of Trudinger \& Wang which required $c$-boundedness
of the domains $M$ and $\bar M$ in the original version of \cite{TrudingerWang07p}.
The same authors gave another proof of sufficiency for strictly regular costs
in \cite{TrudingerWang08p},  and removed the $c$-boundedness
restriction in the subsequent revision of \cite{TrudingerWang07p}.
Our main result is a direct proof of this sufficiency,
found independently but simultaneously with \cite{TrudingerWang08p},
under even weaker conditions on the cost function and domain geometry.
In particular,  the manifolds $M$ and $\bar M$ in Theorem \ref{T:second stab}
need not be equipped with global coordinate charts or Riemannian metrics,
the open set $N \subset M \times \bar M$ need not have a product structure,
and the weakly regular cost need neither be twisted nor strictly regular.
This freedom proves useful in Examples \ref{E:convex graphs}
and 
\ref{E:new examples} and Remark~\ref{R:new results on products}.

\begin{theorem}\label{T:second stab}
{\bf (Weak regularity connects $c$-contact sets)}
Use a cost $c:\cl(M \times \bar M) \longrightarrow \R \cup \{+\infty\}$
with non-degenerate restriction $c \in C^4(N)$ to define a pseudo-metric
(\ref{metric}) on a horizontally convex domain $N \subset M \times \bar M$.
Fix $\Omega \subset \cl M$, $x \in M$, and a set $\bar \Omega \subset \cl\bar N(x)$
which appears convex from $x$.
Suppose $\cap_{0 \le t \le 1} N(\bar x(t))$ is 
dense in $\Omega$
for each geodesic
$t \in \mathopen[0,1\mathclose] \longrightarrow (x,\bar x(t)) \in \{x\} \times \bar \Omega$,
and $c:\cl (N) \longrightarrow \R \cup \{+\infty\}$ is continuous.
If $c$ is weakly regular {\bf (A3w)} on $N$, then
$\bar \Omega \cap \blank\partial_{\Omega}^c u(x)$ is connected
(and in fact appears convex from $x$) for each 
$u:\Omega \longrightarrow \R \cup \{+\infty\}$ with $x \in \dom u$.

\end{theorem}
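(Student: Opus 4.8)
The plan is to reduce the statement to a one‑dimensional maximum principle along $c$‑segments --- the double‑mountain above sliding‑mountain maximum principle --- and then to extract that principle from {\bf (A3w)} by recognizing the relevant second derivative as a cross‑curvature. First I would unwind the definitions. We may assume $\bar\Omega\cap\partial^c_\Omega u(x)$ contains two distinct points $\bar x_0\ne\bar x_1$, the other cases being trivial. Since $\bar\Omega$ appears convex from $x$, these are joined by a geodesic $t\in[0,1]\mapsto(x,\bar x(t))\in\{x\}\times\bar\Omega$ whose interior lies in $N$; by Lemma~\ref{L:c-segments are geodesics} it is a $c$‑segment, so $p(t):=-Dc(x,\bar x(t))$ is affine in $T^*_xM$. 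It suffices to prove $\bar x(t)\in\partial^c_\Omega u(x)$ for every $t$, since then $\{x\}\times(\bar\Omega\cap\partial^c_\Omega u(x))$ is geodesically convex and contained in $\{x\}\times\cl\bar N(x)$, so $\bar\Omega\cap\partial^c_\Omega u(x)$ appears convex from $x$ and in particular is connected. Using continuity of $c$ on $\cl N$ and $c\in C^4(N)$ to see $(x,\bar x(t))\in\dom c$, this membership is equivalent to the inequalities $g_y(t):=c(x,\bar x(t))-c(y,\bar x(t))\le u(y)-u(x)$ for all $y\in\Omega$; and since $\bar x_0,\bar x_1\in\partial^c_\Omega u(x)$ give exactly $g_y(0),g_y(1)\le u(y)-u(x)$ for every $y$, the whole theorem reduces to the bound $g_y(t)\le\max\{g_y(0),g_y(1)\}$ for all $t\in[0,1]$ and $y\in\Omega$.

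Next I would prove this maximum principle first for $y$ in the dense set $S:=\bigcap_{0\le t\le1}N(\bar x(t))$, where $(y,\bar x(t))\in N$ throughout, so $g_y$ is $C^4$ on $(0,1)$ and continuous on $[0,1]$; the case $y=x$ gives $g_y\equiv 0$, so assume $y\ne x$. If the bound failed, $g_y$ would exceed $\max\{g_y(0),g_y(1)\}$ at some interior $t_*$, forcing $g_y'(t_*)=0$ and $g_y''(t_*)\le0$. Differentiating $g_y$ twice and eliminating $\ddot{\bar x}(t_*)$ via the geodesic ($c$‑segment) equation $c_{i\bar k}\ddot{\bar x}^{\bar k}+c_{i\bar k\bar l}\dot{\bar x}^{\bar k}\dot{\bar x}^{\bar l}=0$ expresses $g_y''(t_*)$ through the third and fourth $\bar M$‑derivatives of $c$ at $(x,\bar x(t_*))$ together with the covector $c_{\bar k}(x,\bar x(t_*))-c_{\bar k}(y,\bar x(t_*))$. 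I would then bring in the $c$‑segment $s\mapsto x_s$ from $y$ to $x$ relative to $\bar x(t_*)$ --- available because horizontal convexity of $N$ makes $N(\bar x(t_*))$ appear convex from $\bar x(t_*)$ and $x,y\in N(\bar x(t_*))$ --- and integrate along it, rewriting $g_y''(t_*)$ as an integral over $s$ of the Ma--Trudinger--Wang quantity of (\ref{TWA3w}) evaluated along the pair $\dot x_s$, $\dot{\bar x}(t_*)$ at $(x_s,\bar x(t_*))$. By Lemma~\ref{L:curvature tensor} and (\ref{Riemann curvature}) this quantity is a positive multiple of $\sec_{(x_s,\bar x(t_*))}(p\oplus\0)\wedge(\0\oplus\bar p)$ for $p\propto\dot x_s$ and $\bar p\propto\dot{\bar x}(t_*)$, and the critical condition $g_y'(t_*)=0$ is precisely the nullity of $p\oplus\bar p$; hence {\bf (A3w)} forces $g_y''(t_*)\ge0$. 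Under {\bf (A3s)} this already yields a contradiction (here $\dot{\bar x}(t_*)\ne\0$ and $y\ne x$ keep $p$ and $\bar p$ nonzero, so the sectional curvature is strictly positive), so $g_y$ has no interior maximum and the principle holds. To remove the density restriction, for arbitrary $y\in\Omega$ choose $y_k\in S$ with $y_k\to y$; continuity of $c$ on $\cl N$ gives $g_{y_k}\to g_y$ pointwise on $[0,1]$, so the principle for $y_k$ passes to the limit, and with the endpoint inequalities this gives $g_y(t)\le u(y)-u(x)$ for every $y\in\Omega$, i.e. $\bar x(t)\in\partial^c_\Omega u(x)$, finishing the proof.

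The main obstacle is the weakly regular case, where the cross‑curvature above may vanish along the geodesic, giving only $g_y''(t_*)=0$; the bare statement ``$g_y''\ge0$ at every critical point of $g_y$'' is by itself too weak to yield the maximum principle (a flat interior maximum is not excluded). This is where the level‑set argument anticipated in the introduction is needed: at a rightmost maximum $t_1$ of $g_y$ one analyses the order of vanishing of $g_y-\max g_y$, differentiating the identity that expresses successive derivatives of $g_y$ in terms of the cross‑curvature and its derivatives, restricted to the closed subset along the $c$‑segment on which that curvature degenerates, and applying {\bf (A3w)} to the whole family of null planes swept out there; this forces the relevant higher derivatives of $g_y$ at $t_1$ to carry the sign incompatible with $t_1$ being an interior maximum. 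Carrying out this level‑set analysis --- together with the routine coordinate computation of (\ref{Riemann curvature}) and some care with the boundary behaviour near $\partial N$ --- is where the real work lies.
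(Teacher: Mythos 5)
Your overall reduction is exactly the paper's: pick $\bar x_0,\bar x_1\in\bar\Omega\cap\partial^c_\Omega u(x)$, pass to the geodesic $t\mapsto(x,\bar x(t))$ supplied by apparent convexity, reduce to the one-variable maximum principle $g_y(t)\le\max\{g_y(0),g_y(1)\}$, prove it first for $y$ in the dense set $\bigcap_t N(\bar x(t))$ and then extend by continuity, and for the dense $y$ use the horizontal $c$-segment $s\mapsto x_s$ from $x$ to $y$ together with Lemma~\ref{L:mixed fourth derivative} and the nullity forced by $g_y'(t_*)=0$ (which, as in the paper, is established via parallel transport of $\0\oplus\dot{\bar x}(t_*)$ along the $s$-geodesic so that the nullity holds for every $s$, not just at the endpoint) to conclude $g_y''(t_*)\ge 0$. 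That part matches Proposition~\ref{P:critical implies min} and the proof of Theorem~\ref{T:second stab} closely.

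Where you go off track is the weakly regular case. You correctly observe that ``$g_y'(t_*)=0\Rightarrow g_y''(t_*)\ge0$'' alone is not a maximum principle, and that a level-set argument is needed. But the sketch you give --- analysing the order of vanishing of $g_y-\max g_y$ at a rightmost interior maximum $t_1$ by differentiating the cross-curvature identity, and hoping some higher derivative of $g_y$ at $t_1$ has the wrong sign --- would not work: if $g_y$ were constant on an interval (which is exactly the degenerate scenario that makes the weak case hard), \emph{every} derivative of $g_y$ vanishes at $t_1$, and no jet analysis of the single function $g_y$ can produce a contradiction. The paper's level-set argument (Lemma~\ref{L:level set} and Theorem~\ref{T:maximum principle}) is of a different nature: it is a statement about the whole family $\{g_y\}_y$ at once. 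One sets $g(t,y):=\partial_t f(t,y)$, notes that $Dg=-D\bar Dc\,\dot{\bar x}\ne\0$ by nondegeneracy, so the rising set $S^+(t)=\{y:g(t,y)\ge 0\}$ has a smooth boundary $S(t)$ evolving with outward normal velocity $(\partial_t g)/|Dg|$; Proposition~\ref{P:critical implies min} says $\partial_t g=\partial^2_t f\ge 0$ on $S(t)$, so $S^+(t)$ is monotone nondecreasing in $t$. For a fixed $y$, that monotonicity is precisely what converts the weak sign at critical points into the statement ``once $g_y$ stops decreasing it never decreases again,'' hence $g_y\le\max\{g_y(t_0^+),g_y(t_1^-)\}$ --- with no appeal to strict positivity. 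So while your structural reduction and the computation of $g_y''$ are sound, your proposed finish to the weakly regular case is a genuinely different idea that, as stated, does not close the argument; you would need to replace it with the set-evolution argument of Lemma~\ref{L:level set} and Theorem~\ref{T:maximum principle}.
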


To motivate the proof of this theorem and its relevance to the economics of transportation,
consider the optimal division  of mass $\rho(\cdot)$
between two target points $\bar y,\bar z \in \bar M$ in ratio $(1-\epsilon)/\epsilon$.
This corresponds to the minimization (\ref{Kantorovich}) with
$\bar \rho(\cdot) = (1-\epsilon) \delta_{\bar y}(\cdot) + \epsilon \delta_{\bar z}(\cdot)$.
If $c(x,\bar x)$ is the cost of transporting each commodity unit from $x$ to $\bar x$,
a price differential $\lambda$ between the fair market value of the same commodity at $\bar z$
and $\bar y$ will tend to balance demand $\epsilon$ and $1-\epsilon$ with supply
$\rho(\cdot)$, given the relative proximity of $\bar z$ and $\bar y$ to
the producers $\rho(\cdot)$ distributed throughout $M$. 
Here proximity is measured by transportation cost.  Since each producer
will sell his commodity at $\bar z$ or $\bar y$,  depending on which of these two
options maximizes his profit,  the economic equilibrium and optimal solution will
be given, e.g.\ as in \cite{GangboMcCann96},
by finding the largest $\lambda \in \R$ such that
\begin{eqnarray}\label{double mountain}
u(x) &=& \max \{\lambda- c(x,\bar y), -c(x,\bar z) \}
\\ {\rm yields} \qquad \epsilon &\le& \rho \left[\{x \in M \mid u(x) = - c(x,\bar z)  \}\right].
\nonumber
\end{eqnarray}
Producers in the region $\{x \in M \mid \lambda -c(x,\bar y)> - c(x,\bar z)\}$ will
choose to sell their commodities at $\bar y$, while producers in the region where the
opposite inequality holds will choose to sell their commodities at $\bar z$;
points $x_0\in M$ on the hypersurface $c(x_0,\bar y) - c(x_0,\bar z) = \lambda$ of equality
are indifferent between the two possible sale destinations $\bar y$ and $\bar z$.
We call this hypersurface the {\em valley of indifference}, since it corresponds
to a crease in the graph of the function $u$.
Loeper's observation is that for optimal mappings to be continuous,  each
point $x_0$ in the valley of indifference between $\bar y$ and $\bar z$ must also be indifferent
to a continuous path of points $\bar x(t)$ linking $\bar y=\bar x(0)$ to $\bar z=\bar x(1)$;
otherwise, he constructs a measure $\rho$ concentrated near $x_0$
for which the optimal map to a mollified version of $\bar \rho$ exhibits a discontinuous
jump,  since arbitrarily close producers will choose to supply very different consumers.
Indifference means one can choose $\lambda(t)$ such that
\begin{equation}\label{sliding mountain}
u(x) \ge \max_{0<t<1} \lambda(t) - c(x,\bar x(t))
\end{equation}
for all $x \in M$, with equality at $x_0$ for each $t \in [0,1]$.
When the path connecting $\bar y$ to $\bar z$ exists,
this equality forces $\lambda(t) = c(x_0,\bar x(t)) - c(x_0,\bar z)$;
it also forces the path $t \in [0,1] \longrightarrow (x_0,\bar x(t))$ to be a geodesic
for the pseudo-metric (\ref{metric}) on $(N,h)$, so the path $\{\bar x(t) \mid 0 \le t \le 1 \}$
appears convex from $x_0$.

We think of a function of the form $x \longrightarrow \lambda -c(x,\bar y)$ as defining the
elevation of a mountain on $M$,  {\em focused at} (or {indexed by})
$\bar y \in \bar M$. The function $u(x)$ of (\ref{double mountain})
may be viewed as a {\em double mountain}, while the maximum (\ref{sliding mountain}) may be viewed
as the upper envelope of a one-parameter family of mountains which slide as their foci
$\bar x(t)$ move from $\bar y$ to $\bar z$.
The proof of the preceding theorem
relies on the fact that the sliding mountain stays
beneath the double mountain (while remaining tangent to it at $x_0$),
if the cost is weakly regular.
In the applications below, we
take $\Omega = \cl M$ and $\bar \Omega = \cl \bar N(x)=\cl \bar M$ tacitly.


\begin{proof}[Proof of Theorem \ref{T:second stab}]
Let $c\in C^4(N)$ be weakly regular on some
horizontally convex domain $N \subset M \times \bar M$.
Fix $u:\Omega \longrightarrow \R \cap \{+\infty\}$ and $x \in \dom u$
with $\bar y, \bar z \in \bar \Omega \cap \blank\partial_\Omega^c u(x)$.
This means $u(y) \ge u(x) - c(y,\bar z) + c(x,\bar z)$ for all $y \in \Omega$,
the right hand side takes an unambiguous value in $\R \cup \{-\infty\}$,
and the same inequality holds with $\bar y$ in place of $\bar z$.
Apparent convexity of $\bar \Omega$ from $x$ implies there exists a geodesic
$t \in \mathopen]0,1\mathclose[ \longrightarrow (x,\bar x(t))$ in $(N,h)$
with $\bar x(t) \in \bar \Omega$
which extends continuously to $\bar x(0)=\bar y$ and $\bar x(1) = \bar z$.
The desired connectivity can be
established by proving $\bar x(t) \in \blank\partial_\Omega^c u(x)$ for each
$t \in \mathopen]0,1\mathclose[$,  since this means
$\bar \Omega \cap \blank\partial_\Omega^c u(x)$ appears
convex from $x$.

Horizontal convexity implies $N(\bar x(t))$ appears convex from $\bar x(t)$ for each $t\in[0,1]$,
so agrees with the illuminated set $V(x,\bar x(t)) = N(\bar x(t))$ of Definition \ref{D:visible set}.
For any $y \in  \cap_{0 \le t \le 1} N(\bar x(t))$,
the sliding mountain lies below the double mountain, i.e.,
$f(t,y) := -c(y,\bar x(t)) + c(x,\bar x(t)) \le \max \{f(0^+,y),f(1^-,y)\}$,
according to Theorem \ref{T:maximum principle} and Remark \ref{R:Lambda is open}.
Note that $f:[0,1] \times \Omega \longrightarrow \R \cup \{-\infty\}$ is a continuous
function, since $t \in [0,1] \to (x,\bar x(t)) \in \cl N$ and
$c:\cl(N)\longrightarrow \R \cup \{+\infty\}$ are continuous and their composition
is real-valued.
We therefore replace $0^+$ by $0$ and $1^-$ by $1$ and extend the inequality to
all $y \in \Omega$ using the density of $\cap_{0 \le t \le 1} N(\bar x(t))$.
On the other hand,  $\bar x(t) \in \partial_\Omega^c u(x)$ holds if and only
if $u(y) \ge u(x) + f(t,y)$ for each $y \in \Omega$ and $t \in [0,1]$.
Since $u(y) \ge u(x) + \max\{f(0,y),f(1,y)\}$ by hypothesis,  we have established
apparent convexity of $\bar \Omega \cap \partial^c_\Omega u(x)$ from $x$ and
the proof is complete.
\end{proof}

\begin{remark}[H\"older continuity]
Since connectedness and apparent convexity
survive closure, we may replace $\bar \Omega \cap \partial_\Omega^c u(x)$
by its closure (often $\partial^c u(x)$) without spoiling the result.
The apparent convexity of $\partial^c u(x)$ from $x$ hints at
a kind of monotonicity for the correspondence
$x \in M \longrightarrow \partial^c u(x)$.
A strict form of this monotonicity can be established when the cost is strictly
regular {(\bf A3s)} (Proposition \ref{P:A3S and support}),
and was exploited by Loeper to prove H\"older continuity
$F \in C^{1/(4n-1)}(M; \cl \bar M)$ of the optimal map between
densities $\rho \in L^\infty(M) $ and $\bar \rho$ with
$(1/\bar \rho) \in L^\infty(M)$ for costs which are strictly regular and
bi-twisted on the closure of a bi-convex domain
$M \times \bar M \subset \subset \R^{2n}$.
Details of his argument and conclusions are given in the appendices below.
\end{remark}


\begin{remark}[On the relevance of twist and apparent convexity to the converse]
In the absence of the twist condition,  we have
defined apparent convexity by the existence of a geodesic, which need not be extremal
or unique.  When {\bf (A3w)} fails in this general setting,
Loeper's converse argument
shows the existence of a geodesic segment
with endpoints in $(\{x\} \times \bar N(x)) \cap \partial^c u$
but which departs from this set at some points in between.
Since the twist condition and apparent convexity
imply the existence and uniqueness of geodesics
linking points in $\{x\} \times \bar N(x)$, for a twisted cost the {\em if } statement in
Theorem \ref{T:second stab} becomes necessary as well as sufficient,
a possibility which was partly anticipated in
Ma, Trudinger \& Wang \cite{MaTrudingerWang05}.
\end{remark}

\begin{remark}[Product Domains]
If $N = M \times \bar M$ the hypotheses and conclusions become simpler to state because
$N(\bar x)=M$ and $\bar N(x) = \bar M$ for each $(x,\bar x) \in N$.
If, in addition
the product
$N = M \times \bar M \subset \R^{2n}$ is a bounded Euclidean domain, we recover
the result proved by Loeper \cite{Loeper07p} based on the regularity results of Trudinger
\& Wang \cite{TrudingerWang07p}, whose hypotheses were relaxed after \cite{TrudingerWang08p}.
\end{remark}

\begin{example}[The reflector antenna and conformal geometry]\label{E:reflector antenna}
The restriction of the cost function $c(x,\bar x) = - \log |x-\bar x|$
from $\R^{n} \times \R^{n}$ to the unit sphere
$M = \bar M= \Sphere^{n-1} := \partial \Ball^{n}_1(\0)$ arises
in conjunction \cite{Wang04} \cite{GlimmOliker03} with the reflector antenna problem
studied by 
Caffarelli, Glimm, Guan, Gutierrez, Huang, Kochengin, Marder, Newman, Oliker,
Waltmann, Wang, and Wescott, among others.
It induces a metric $h$ known to satisfy {\bf (A0)-(A3s)} with
$N = (M \times \bar M) \setminus \Delta$.  Note however,
that $c(x,\bar x)$ actually defines a pseudo-metric $h^{(\infty,\infty)}$ on the larger space
$\Rn \times \Rn \setminus \Delta$ which is almost but not quite bi-convex.
Here $\Delta := \{(y,y) \mid y \in \Rn \cup \{\infty \} \}$ denotes the diagonal.
For fixed
$a \ne \bar a \in \R^{n}$,
$$\tilde c^{(a,\bar a)}(x,\bar x)
:= - \frac{1}{2}\log \frac{|x - \bar x|^2|a-\bar a|^2}{|x - \bar a|^2|a - \bar x|^2}
$$
induces a pseudo-metric $h^{(a,\bar a)}$ which coincides with $h^{(\infty,\infty)}$
on the set where both can be defined.
Moreover, $\tilde c^{(a,\bar a)}$ extends smoothly to
$(\tilde M_a \times \tilde M_{\bar a}) \setminus \Delta$
where $\tilde M = \Rn \cup \{\infty\}$ is the Riemann sphere
and $\tilde M_a = \tilde M \setminus \{a\}$.
Furthermore $h^{(a,\bar a)}$ is independent of $(a,\bar a)$,  so has a (unique)
extension $\tilde h$ to $\tilde N := (\tilde M \times \tilde M) \setminus \Delta$
which turns out to satisfy {\bf (A2)-(A3s)} on the bi-convex set $\tilde N$;
as in \cite{Loeper07p} \cite{MaTrudingerWang05} \cite{TrudingerWang07p},
this can be verified using the alternate characterization
of {\bf (A3w/s)} via concavity (/ $2$-uniform concavity) of the restriction of the function
\begin{equation}\label{A3 in TM}
q^* \in T^*_x \tilde M \longrightarrow p^i p^j c_{ij}(x,\cexp_x q^*)
\end{equation}
to the nullspace of $p \in T_x \tilde M$ in $\dom \cexp_x$; here the $\cexp$
map is defined at \eqref{dom c-exp}.  The nullspace condition is crucial,  since
the value of this function is given by
\begin{equation}\label{A3 in log}
p^i p^j f_{ij} |_{(Df)^{-1}(-q^*)} = 2 (q^*_i p^i)^2 -|p|^2|q^*|^2,
\end{equation}
where the left-hand expression in (\ref{A3 in log}) coincides with the right-hand
expression in (\ref{A3 in TM}) for general costs of the form $c(x,\bar x) = f(x-\bar x)$.
Homogeneity of the resulting manifold $(\tilde N, \tilde h)$ follows
from the symmetries $\tilde N = (F \times F)(\tilde N)$ and
$$
\tilde c^{(a,\bar a)}(x,\bar x) = \tilde c^{(F(a), F(\bar a))}(F(x),F(\bar x))
$$
under simultaneous translation $F(x) = x-y$ by $y \in \Rn$
or inversion $F(x) = x/|x|^2$ of both factor manifolds; note
the identity $|x -\bar x| = |x||\bar x|||x|^{-2}x - |\bar x|^{-2}\bar x|$.  This simplifies
the verification of {\bf (A2)-(A3s)}, since it means infinity plays no distinguished role.
Bi-convexity of $\tilde N$ follows
from the fact that the projection of the null geodesic
through
$(z,\bar x)$ and $(y,\bar x)$ onto $M$ is given by a portion of the circle in
$\Rn \cup \{\infty\}$ passing through $z,y$ and $\bar x$ --- the unique arc of this
circle (or line) stretching
from $z$ to $y$ which does not pass through $\bar x$.  From homogeneity it suffices to
compute this geodesic in the case $(z,\bar x)=(\infty,0)$;
we may further take $y=(1,0,\ldots,0)$ using
invariance under simultaneous rotations
$F(x) = \Lambda x$ by $\Lambda \in O(n)$ and dilations $F(x) = \lambda x$
by $\lambda>0$.  This calculation demonstrates that
$\cap_{0 \le t \le 1} \tilde N(\bar x(t))$ is the complement of a circular arc
--- hence a dense subset of $\tilde M$ --- for each geodesic
$t \in [0,1] \longrightarrow (x,\bar x(t)) \in \tilde N$.
Notice that if $x, \bar x(0), \bar x(1)$ all lie on the sphere $M=\bar M$,
then so does $\bar x(t)$ for each $t \in [0,1]$.
After verifying that $h$ coincides with the restriction
of $\tilde h$ to the codimension-2 submanifold $N$,
we infer for $\bar x \in \partial \Ball_1^{n}(\0)$ that
$N(\bar x) \times \{\bar x\}$ is a totally geodesic hypersurface in
$\tilde N(\bar x) \times \{\bar x\}$,  so
the horizontal and vertical geodesics, bi-convexity, and strict regularity of
$(N,h)$
are inherited directly from the geometry of $(\tilde N,\tilde h)$,
the strict regularity via Lemma~\ref{L:mixed fourth derivative}.
Although we lack a globally defined smooth cost on $\tilde N$,
we have one on $N$,
so the hypotheses and hence the conclusions of
Theorems \ref{T:second stab} and \ref{T:maximum principle}
are directly established in the reflector antenna problem,
whose geometry is also clarified:  the vertical geodesics are
products of points on $M$ with circles on $\bar M$,  where by circle
we mean the intersection of a two-dimensional plane with $\bar M = \partial \Ball^n_1(\0)$.
The geodesics would be the same for the negation
$c_+(x,y) = + \log |x-y|$ of this cost, which satisfies {\bf (A0)-(A2)} on the bi-convex
domain $N$, but violates {\bf (A3w)} for the same reason that $c$ satisfies
{\bf (A3s)}.
Using the Euclidean norm on our coordinates, the $c$-exponential (\ref{dom c-exp})
is given by $\bar x = c \Exp_x (q^*) := x - q^*/|q^*|^2$,  the optimal map takes the form
$F(x) = c \Exp_x Du(x)$, 
and the resulting
degenerate elliptic Monge-Amp\`ere type equation
(\ref{prescribed determinant})--(\ref{Monge-Ampere type})
on $\Rn$, expressed in coordinates, is
\begin{equation}\label{sigma-n Yamabe}
\det [u_{ij}(x) + 2 u_i(x)u_j(x) - \delta_{ij} |Du(x)|^2] =
\frac{|Du(x)|^{2n}\rho(x)}{\bar \rho(x-Du(x)/|Du(x)|^2)}.  
\end{equation}
Here $\bar \rho (\bar x) = O(|\bar x|^{-2n})$ as $\bar x \to \infty$ since $\bar \rho$
transforms like an $n$-form under coordinate changes.  Using the isometries
above to make $Du(x_0)=0$ at a point $x_0$ of interest,  a slight perturbation
of the standard Monge-Amp\`ere equation is recovered nearby.
The operator under the determinant (\ref{sigma-n Yamabe}) is proportional to the Schouten
tensor of a conformally flat metric $ds^2 = e^{-4u} \sum_{i=1}^n (dx^i)^2$,
so a similar equation occurs in Viaclovsky's $\sigma_n$-version of the Yamabe
problem \cite{Viaclovsky00a} \cite{Viaclovsky00b},
which has been studied by the many authors in
conformal geometry surveyed in \cite{Viaclovsky06p} and \cite{Trudinger06}.
\end{example}

\begin{example}[Riemannian manifolds]\label{E:Riemannian}
Consider a Riemannian manifold $(M=\bar M,g)$.  Taking the cost function to be
the square of the geodesic distance $c(x,\bar x)=d^2(x,\bar x)/2$ associated to $g$,
induces a pseudo-metric tensor (\ref{metric}) on the domain
$N$ where $c(x,\bar x)$ is smooth,
i.e. the complement of the cut locus. 
Moreover, the cost exponential (Definition \ref{c-exp})
reduces  \cite{McCann01} to the Riemannian exponential
\begin{equation}\label{Riemannian exponential}
\cexp_x p^* = \exp_x p
\end{equation}
with the metrical identification $p^* = g(p,\cdot)$
of tangent and cotangent space.
A curve $t \in [0,1] \longrightarrow x(t) \in N(\bar x)$
through $\bar x$
is a geodesic in $(M,g)$
if and only if the curve $\tau(t) = (x(t),\bar x)$ is a
(null) geodesic in $(N,h)$, according to Lemma \ref{L:c-segments are geodesics}.
On the diagonal $x = \bar x$, we compute 
$h(p\oplus \bar p, p \oplus \bar p) = g(p,\bar p)$,
meaning the pseudo-Riemannian space $(N,h)$ contains
an isometric copy of the Riemannian space $(M,g)$ 
along its diagonal $\Delta := \{(x,x) \mid x \in M\}$.
The symmetry $c(x,\bar x) = c(\bar x,x)$ shows $\Delta$ 
to be embedded in $N$ as a totally geodesic submanifold, and nullity of
$p \oplus \bar p \in T_{(x,x)}N$ reduces to orthogonality of $p$ with $\bar p$.
This perspective 
illuminates
Loeper's observation \cite{Loeper07p} that negativity of one Riemannian sectional
curvature at any point on $(M,g)$ violates weak regularity {\bf (A3w)} of the cost.
Indeed, the comparison of (\ref{Riemannian law of cosines}) with
Lemma \ref{L:mixed fourth derivative} allows us to recover the fact that along
the diagonal, cross-curvatures in $(N,h)$ are proportional to Riemannian curvatures in $(M,g)$:
\begin{equation}\label{diagonal cross-curvature}
\sec^{(N,h)}_{(x,x)} (p \oplus \0)\wedge (\0 \oplus \bar p) =
\frac{4}{3}\sec^{(M,g)}_x p \wedge \bar p.
\end{equation}
\end{example}

\begin{example}[The round sphere]\label{E:sphere}
In the case of the sphere $M=\bar M = \Sphere^n$ equipped
with the standard round metric,  the cut locus consists of pairs $(x,\bar x)$
of antipodal points $d(x,\bar x) = \diam M$.  Denote its complement
by $N= \{(x,\bar x) \mid d(x,\bar x)<\diam M\}$,  where
$c(x,\bar x)= d^2(x,\bar x)/2$ is smooth. In this case, the identification
(\ref{Riemannian exponential}) of cost exponential with Riemannian
exponential implies bi-convexity of $N$,  since the cut locus forms a circle
(hypersphere if $n >2$)
in the tangent space $T_x \Sphere^n$, and the verification of ({\bf A3s}) both on and
off the diagonal was carried out by Loeper \cite{Loeper07p}.  In fact more is true:
$(N,h)$ is non-negatively cross-curved, as we verify in \cite{KimFibration}.
Given an $h$-geodesic
$t \in [0,1] \longrightarrow (x,\bar x(t)) \in N$,  we find
$\cap_{0 \le t \le 1} N(\bar x(t))$ exhausts $\Sphere^n$ except for the
antipodal curve to the exponential image
$t \in [0,1] \longrightarrow \bar x(t)$ of a line segment in $T_x \Sphere^n$.
Although this curve does not generally lie on a great circle
or even a circle, its complement is dense in $\Sphere^n$, whence
the double-mountain above sliding-mountain property and
Theorem \ref{T:second stab} follow.
\end{example}


\begin{example}[Positive versus negative curvature]
To provide more insight into Loeper's results ---
continuity of optimal maps on the round sphere versus discontinuous optimal maps
on the saddle or hyperbolic plane ---  consider dividing a smooth positive
density $\rho$ (say uniform on some disk of volume $\sum_{i=1}^3 \epsilon_i = 1$)
optimally between three points $\bar x_1, \bar x_2, \bar x_3$ on a geodesic through its center:
$\bar \rho(\cdot) = \sum_{i=1}^3 \epsilon_i \delta_{\bar x_i}(\cdot)$.
The solution to this problem is given, e.g.\ \cite{GangboMcCann96}, by finding
constants $\lambda_1,\lambda_2,\lambda_3 \in \R$ for which the function
\begin{eqnarray}\label{triple peak}
u(x)=\max_{1 \le i \le 3} \{u_i(x)\} &{\rm given\ by}& u_i(x) :=
-\lambda_i -c(x,\bar x_i) \\ \nonumber
{\rm solves}\
\epsilon_i=\rho[\Omega_i] &{\rm with} & \Omega_i := \{x \in M \mid u(x)=u_i(x)\}
\end{eqnarray}
for $i=1,2,3$.  The regions $\Omega_i$ are illustrated in Figure 1
for the case where cost $c(x,\bar x) = d^2(x,\bar x)/2$ is proportional
to Riemannian distance squared on the (a) round sphere,
(b) Euclidean plane, (c) hyperbolic disc.

\resizebox{12cm}{!}{\includegraphics{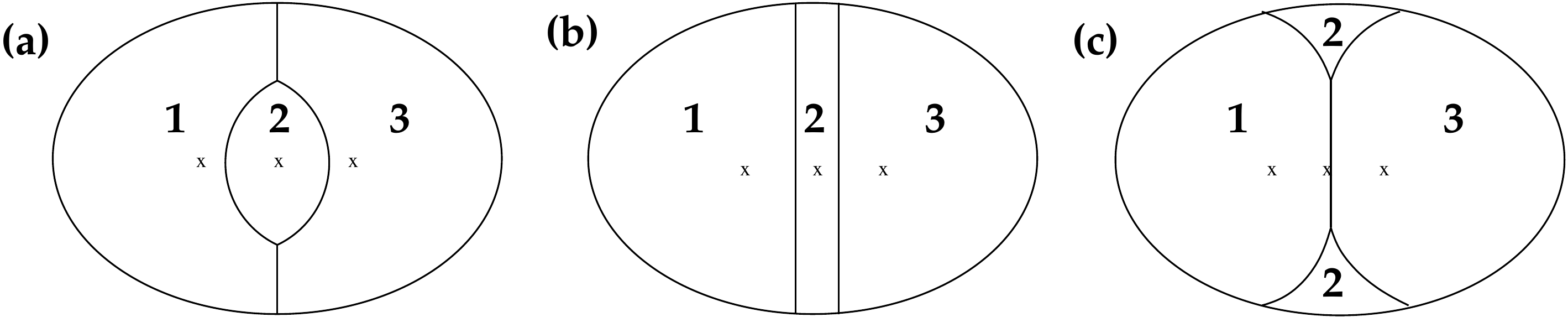}}
$$\centerline{Figure 1}$$

\noindent
Notice that only in the third
case is one of the regions $\Omega_2$ disconnected.
Now consider transporting a smoothly smeared and positive approximation $\bar \rho_\eta(\cdot)$
of the discrete source $\bar \rho(\cdot)$, obtained by mollifying on scale $\eta>0$, to the uniform
measure $\rho(\cdot)$.
For $\eta>0$ sufficiently small, points $\bar x$ near $\bar x_2$ will need to map into
a $\delta$-neighbourhood of $\Omega_2$.  It is not hard to believe the results of
Loeper's calculation for case (c),  namely that if $\eta>0$ is small the map from
$\bar \rho_\eta(\cdot)$ to $\rho(\cdot)$ will
have a discontinuity near $\bar x_2$ separating the regions which map into disjoint
$\delta$-neighbourhoods separating the two disconnected components of $\Omega_2$.
\end{example}

\begin{example}[New examples from old: perturbations, submersions, and products]
\label{E:new examples}
It remains interesting to find more general sufficient conditions
on a Riemannian manifold $(M,g)$ and function $f$ for the pseudo-metric
$h$ induced on the complement of the cut locus $N \subset M \times M$ by
$c(x,\bar x) = f(d(x,\bar x))$ to be strictly or weakly regular.
It is clear that slight perturbations of an {\bf (A3s)} cost remain
strictly regular if the perturbation is smooth and small enough.
Delano\"e \& Ge have quantified this observation on the round sphere \cite{DelanoeGe}.
It is also possible to deduce that the local properties such as
{\bf (A3s/w)} and non-negative cross-curvature
and the global property such as the conclusion of Theorem~\ref{T:second stab}
all survive Riemannian submersion \cite{KimFibration}, holding for example on quotients
of the round sphere under discrete \cite{DelanoeGe} \cite{KimFibration} or continuous
\cite{KimFibration} group actions, including in particular all spaces of constant
positive curvature \cite{DelanoeGe}, and the Fubini-Study metric on ${\bf CP}^n$
\cite{KimFibration}.
On the other hand,  an example constructed by one of us shows that
positive but non-constant sectional curvature of the underlying manifold $(M =\bar M,g)$
does not guarantee weak regularity of the cost $c=d^2/2$ away from the diagonal in
$N \subset M \times M$
\cite{KimCounterexample}.  As a final important example,
consider two manifolds
$N_+ \subset M_+ \times \bar M_+$ and $N_- \subset M_- \times \bar M_-$ equipped with
cost functions $c_\pm \in C^4(N_\pm)$ inducing
pseudo-metrics $d\ell_\pm^2 = - c_{i \bar j} d x_\pm^i dx_\pm^{\bar j}$.
As a consequence of Lemma \ref{L:mixed fourth derivative},
the product metric $d\ell^2 = d\ell_+^2 + d\ell_-^2$ corresponding
to the cost function $c_+(x_+,\bar x_+) + c_-(x_-,\bar x_-)$ on
$N = (M_+ \times M_-) \times (\bar M_+ \times \bar M_-)$
is non-negatively cross-curved (\ref{full cross-curvature}) if both $(N_\pm,h_\pm)$ are.
Although it is not true that weak regularity of the factors
implies the same for the product,
many of the known examples of weakly regular costs
(including those of Examples \ref{E:convex graphs}, \ref{E:sphere}
 and the sumbersions above) actually turn out to be non-negatively
cross-curved  \cite{KimFibration},  so this product construction
becomes a fruitful source of new examples.   Furthermore, since geodesics in the product
are products of geodesics,  bi-convexity of
the factors $(N_\pm,h_\pm)$ implies bi-convexity of the product manifold $(N,h)$.
Because a product geodesic may have constant factors,
it is not hard to show that the non-negatively curved product manifolds
$(N,h)$ always fail to be strictly regular even when both factor manifolds $(N_\pm,h_\pm)$
are positively cross-curved.  Thus tensor products of positively cross-curved costs
yield a new source of weakly regular
costs that fail to be strictly regular --- the very simplest example of
which is given by arbitrary sums $c(x,\bar x) = \sum_{k=1}^n c(x^k, x^{\bar k}; k)$ of
$k=1,\ldots,n$ positively cross-curved costs $c(s,t;k)$ as in (\ref{1d costs})
on bi-convex subdomains $N_1,\ldots, N_n \subset \R^2$. 
\end{example}

\begin{remark}\label{R:new results on products}(Products ${\mathbf S}^{n_1} \times \cdots \times {\mathbf S}^{n_k} \times \R^l$ and their Riemannian submersions)
The conclusion of Theorem~\ref{T:second stab} holds for the distance squared cost on the
Riemannian product $M = \bar M$ of round spheres
${\mathbf S}^{n_1} \times \cdots \times {\mathbf S}^{n_k}\times \R^l$ --- or its Riemannian
submersions --- by combining the preceding example with the result of \cite{KimFibration}.
The weak regularity {\bf (A3w)} of the
cost on $N = M \times \bar M \setminus \{\hbox{\rm cut locus}\}$
and the biconvexity of $N$ are  satisfied as in Example~\ref{E:new examples}.
The density condition of $\cap_{0\le t \le 1}N(\bar x(t))$ follows also easily since the cut
locus of one point in this example is a smooth submanifold of codimension greater than or
equal to $2$. This new global result illustrates an advantage of our method over
other approaches \cite{TrudingerWang07p} \cite{TrudingerWang08p} \cite{Loeper07p},
which would require a regularity result for optimal maps (or some a priori estimates)
to obtain the conclusion of Theorem~\ref{T:second stab}. To implement such
approaches for the manifolds of this example, one would need to establish that an optimal map
remains \emph{uniformly away from the cut locus}, as is currently known only
for a single sphere $M = \bar M = {\mathbf S}^n$ from work of
Delan\"oe \& Loeper \cite{DelanoeLoeper06}
(alternately \cite{Loeper07p} or Corollary~\ref{C:UAC}).
To the best of our knowledge, no one has yet succeeded in establishing regularity results
for this product example,  though one could try to obtain them from
Theorems~\ref{T:second stab} and \ref{T:supporting c-convex}, by
extending the approach we develop successfully in Appendix~\ref{S:spherical continuity}
for a single sphere $M=\bar M = {\mathbf S}^n$.
\end{remark}


\section{Proof of Main Results}
\label{S:proofs}


Let us begin by establishing coordinate representations of the Christoffel
symbols and Riemann curvature tensor for the pseudo-metric (\ref{metric}).

\begin{lemma}\label{L:curvature tensor}
{\bf (Riemann curvature tensor and Christoffel symbols)}
Use a non-degenerate cost $c \in C^4(N)$ to define a pseudo-metric
(\ref{metric}) on the domain $N \subset M \times \bar M$.
In local coordinates $x^1,\ldots,x^n$ on $M$ and
$x^{\bar 1},\ldots,x^{\bar n}$ on $\bar M$, the only non-vanishing
Christoffel symbols are
\begin{equation}
{\Gamma_{i j}}^{m} = c^{m {\bar k}} c_{{\bar k} i j} \qquad {\rm and}\qquad
{\Gamma_{{\bar i} {\bar j}}}^{{\bar m}} = c^{{\bar m} k} c_{k {\bar i} {\bar j}}.
\label{nonvanishing Christoffel symbols}
\end{equation}
Furthermore, the components of the Riemann curvature tensor (\ref{sectional curvature})
vanish except when the number of barred and unbarred indices is equal, in which
case the value of the component can be inferred from $R_{ij\bar k \bar \el}=0$ and
\begin{equation}\label{Riemann curvature}
2 R_{i {\bar j} {\bar k} \el} =
c_{i {\bar j} {\bar k} \el} - c_{\el i {\bar f}} c^{a {\bar f}} c_{a {\bar j} {\bar k}}.
\end{equation}
\end{lemma}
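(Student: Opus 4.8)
The plan is to compute everything directly in the given product coordinates, exploiting the block off-diagonal structure of the pseudo-metric $h$ in (\ref{metric}). Write $I,J,\dots$ for general indices running over $1,\dots,n,\bar 1,\dots,\bar n$, and recall that the only nonzero components of the metric are $h_{i\bar j} = h_{\bar j i} = -\tfrac12 c_{i\bar j}$, so that the only nonzero components of the inverse metric are $h^{i\bar j} = h^{\bar j i} = -2 c^{\bar j i}$ (using that $c^{\bar j k}$ is the inverse matrix of $c_{i\bar j}$). First I would plug this into the Koszul formula ${\Gamma_{IJ}}^{M} = \tfrac12 h^{MK}(\partial_I h_{JK} + \partial_J h_{IK} - \partial_K h_{IJ})$ and simply track which index-type combinations can survive. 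Because $h_{IJ}$ is nonzero only when $I,J$ have opposite bar-type, and $h^{MK}$ likewise, a short case check shows the only surviving Christoffel symbols are ${\Gamma_{ij}}^{m}$ and ${\Gamma_{\bar i\bar j}}^{\bar m}$; all the "mixed" symbols ${\Gamma_{i\bar j}}^{M}$ vanish because the three terms in the Koszul bracket are each derivatives of $c_{i\bar j}$ with respect to a coordinate of the same bar-type as one of the fixed indices, and these cancel or are projected out by $h^{MK}$. For the surviving one, $h^{mK}$ is nonzero only for $K=\bar k$, so ${\Gamma_{ij}}^{m} = \tfrac12 h^{m\bar k}(\partial_i h_{j\bar k} + \partial_j h_{i\bar k} - \partial_{\bar k} h_{ij})$; the last term is zero since $h_{ij}=0$, and $\partial_i h_{j\bar k} = -\tfrac12 c_{ij\bar k}$, giving ${\Gamma_{ij}}^{m} = (-2c^{\bar k m})(-\tfrac12)(\tfrac12 c_{ij\bar k}+\tfrac12 c_{ij\bar k}) = c^{m\bar k}c_{\bar k ij}$, as claimed (using symmetry of mixed partials). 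The barred identity follows by the symmetry $i\leftrightarrow\bar i$.

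Next I would feed these Christoffel symbols into the curvature formula ${R^{I}}_{JKL} = \partial_K {\Gamma_{LJ}}^{I} - \partial_L {\Gamma_{KJ}}^{I} + {\Gamma_{KM}}^{I}{\Gamma_{LJ}}^{M} - {\Gamma_{LM}}^{I}{\Gamma_{KJ}}^{M}$ and then lower the first index with $h$ to get $R_{IJKL}$. The key bookkeeping observation is that each nonzero $\Gamma$ has either all-unbarred lower/upper indices of one type or all-barred of the other; differentiating $\Gamma$ in any direction preserves its index-type signature, and each $\Gamma\Gamma$ product pairs an unbarred-type symbol with a barred-type symbol (since the contracted index $M$ must match the upper index of one factor and a lower index of the other). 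Carefully counting bar-types then forces $R_{IJKL}$ to vanish unless exactly two of the four indices are barred; in particular $R_{ijk\bar\el}$, $R_{ijkl}$, and the fully-barred analogues all vanish, and $R_{ij\bar k\bar\el}=0$ as well (this needs the slightly finer check that the two candidate $\Gamma$-derivative terms have index types $\partial_{\bar k}{\Gamma_{\bar\el j}}^{\cdot}$ which is not of a surviving type since ${\Gamma_{\bar\el j}}^{\cdot}=0$). That reduces everything to computing $R_{i\bar j\bar k\el}$.

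For that component I would specialize: lowering gives $R_{i\bar j\bar k\el} = h_{i\bar m}{R^{\bar m}}_{\bar j\bar k\el}$, and in ${R^{\bar m}}_{\bar j\bar k\el}$ only the pieces built from the surviving Christoffels contribute, namely $\partial_{\bar k}{\Gamma_{\el\bar j}}^{\bar m} - \partial_\el{\Gamma_{\bar k\bar j}}^{\bar m}$-type terms together with the appropriate $\Gamma\Gamma$ terms; since ${\Gamma_{\el\bar j}}^{\bar m}=0$, one term drops, and expanding $\partial_\el(c^{\bar m k}c_{k\bar k\bar j})$ by the product rule, using $\partial_\el c^{\bar m k} = -c^{\bar m a}c^{b k}c_{a b\bar f}\cdots$ — more precisely $\partial_\el c^{\bar m k} = -c^{\bar m a} c_{a\el\bar f} c^{\bar f k}$ — and combining with the remaining $\Gamma\Gamma$ contribution, the cubic-in-$c$ terms organize into $-c_{\el i\bar f}c^{a\bar f}c_{a\bar j\bar k}$ after contracting with $h_{i\bar m}=-\tfrac12 c_{i\bar m}$ and clearing the factor of $2$; the remaining quartic term is $c_{i\bar j\bar k\el}$. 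This yields $2R_{i\bar j\bar k\el} = c_{i\bar j\bar k\el} - c_{\el i\bar f}c^{a\bar f}c_{a\bar j\bar k}$. I expect the main obstacle to be precisely this last step: getting the factors of $2$ from $h$ versus $h^{-1}$ and the signs in $\partial c^{\bar m k}$ to cancel correctly, and matching the symmetrization so that exactly the stated combination survives (in particular verifying the cubic terms do not also produce a $c_{\el\bar j\bar f}c^{a\bar f}c_{ai\bar k}$-type contribution, which must cancel against a $\Gamma\Gamma$ term). Everything else is routine index-type bookkeeping.
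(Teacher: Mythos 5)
Your proposal is correct and follows essentially the same route as the paper: the Koszul formula plus the off-diagonal block structure of $h$ to isolate the Christoffel symbols ${\Gamma_{ij}}^m = c^{m\bar k}c_{\bar k ij}$ and ${\Gamma_{\bar i\bar j}}^{\bar m}$, index-type bookkeeping to kill all curvature components except those with two barred and two unbarred indices (and the separate check that $R_{ij\bar k\bar\ell}=0$), and a direct expansion of the single surviving derivative term $-\partial\,(c^{\bar m a}c_{a\bar j\bar k})$ followed by lowering an index with $h=-\tfrac12 c_{i\bar m}$ to produce $2R_{i\bar j\bar k\ell}=c_{i\bar j\bar k\ell}-c_{\ell i\bar f}c^{a\bar f}c_{a\bar j\bar k}$. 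The only cosmetic differences are that you lower the first index of ${R^{\bar m}}_{\bar j\bar k\ell}$ where the paper lowers the last index of ${R_{i\bar j\bar k}}^{\bar m}$, and that the ``remaining $\Gamma\Gamma$ contribution'' you allude to in the final step in fact vanishes identically, exactly as in the paper.
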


\begin{proof}
From
$$
{\Gamma_{i j}}^{m} := \frac{1}{2} h^{m k} (h_{k j, i} + h_{i k, j} - h_{i j, k})
+ \frac{1}{2} h^{m \bar k} (h_{\bar k j, i} + h_{i \bar k, j} - h_{i j, \bar k}),
$$
and the analogous definition with $i,j,$ and/or $m$ replaced by $\bar i,\bar j$ and $\bar m$
respectively, the off-diagonal form (\ref{metric}) of the pseudo-metric and the equality of mixed partials
implies the only non-vanishing Christoffel symbols are given by
\begin{eqnarray}
\nonumber
{\Gamma_{i j}}^{m}
&=& -\frac{1}{2} c^{m {\bar k}} (-c_{{\bar k} j i} - c_{i {\bar k} j} + 0)
\\ &=& c^{m {\bar k}} c_{{\bar k} i j}
\nonumber
\end{eqnarray}
and $\Gamma_{\bar i \bar j}^{\bar k} = c^{\bar m k} c_{k \bar i \bar j}$.
Since the only nonvanishing Christoffel
symbols are given by (\ref{nonvanishing Christoffel symbols}),  it is not
hard to compute the relevant components of Riemann's curvature tensor in the
coordinates we have chosen:
\begin{eqnarray}
\nonumber
{R_{i{\bar j}{\bar k}}}^{{\bar m}}
&=& - \frac{\partial}{\partial x^i} \Gamma_{{\bar j}{\bar k}}^{{\bar m}}
  + \frac{\partial}{\partial x^{\bar j}} \Gamma_{i{\bar k}}^{{\bar m}}
  + \Gamma_{{\bar k} i}^{{f}} \Gamma_{{\bar j} {f}}^{{\bar m}}
  - \Gamma_{{\bar k} {\bar j}}^{{f}} \Gamma_{i {f}}^{{\bar m}}
  + \Gamma_{{\bar k} i}^{{\bar f}} \Gamma_{{\bar j} {\bar f}}^{{\bar m}}
  - \Gamma_{{\bar k} {\bar j}}^{{\bar f}} \Gamma_{i {\bar f}}^{{\bar m}}
\\ &=& - \frac{\partial}{\partial x^i}(c^{{\bar m} a} c_{a {\bar j} {\bar k}})
\nonumber
\\ &=& - c^{{\bar m} b} c_{i {\bar j} {\bar k} b} + c^{{\bar m} b} c_{b i {\bar f}} c^{a {\bar f}} c_{a {\bar j} {\bar k}}.
\nonumber
\end{eqnarray}
Using the pseudo-metric (\ref{metric}) to lower indices yields (\ref{Riemann curvature}),
and the other non-vanishing components of the Riemann tensor can then be deduced
from the well-known symmetries
$-R_{\bar j i \bar k \el} = R_{i \bar j \bar k \el} = R_{\bar k \el i \bar j}
= -R_{\el \bar k i \bar j}$.  To see that the remaining components all vanish,
it suffices to repeat the analysis starting from the analogous definitions of
${R_{ij\bar k}}^{\bar m}$, ${R_{ij\bar k}}^{m}$, and ${R_{ijk}}^{\bar m}$ .
\end{proof}

\begin{remark}[Vanishing curvatures]
The vanishing of $R_{ijk\el}$, $R_{ijk \bar \el}$, $R_{i \bar j \bar k \bar \el}$,
$R_{\bar i \bar j \bar k \bar \el}$, and $R_{ij \bar k \bar \el}=0$ imply that
$(\wedge^2 T_{}M) \oplus (\wedge^2 T_{}\bar M)$
lies in the null space of curvature operator viewed as a quadratic form on
$\wedge^2 T_{} N = (\wedge^2 T_{}M) \oplus (\wedge^2 T_{}\bar M)
\oplus (T_{}M \wedge T_{}\bar M)$.
Strict / weak regularity of the cost, and the signs of the
cross-curvature (\ref{full cross-curvature}), and sectional curvature
(\ref{equivalent to sectional curvature non-negativity}) 
are therefore all
determined by the action of this operator on the $n^2$ dimensional vector bundle
$T_{}M \wedge T_{} \bar M$.  Since the cone of null vectors is nonlinear,
it is not obvious whether {\bf (A3w)} implies non-negativity of all
cross-curvatures of $(N,h)$,  but Trudinger \& Wang \cite{TrudingerWang07p}
prove, as in Example \ref{E:reflector antenna}, that this is not so. 
\end{remark}

We next recall an important map 
of Ma, Trudinger \& Wang
\cite{MaTrudingerWang05}, 
called the {\em cost-exponential} by Loeper \cite{Loeper07p}.

\begin{definition}\label{c-exp}{\bf (Cost exponential)}
If $c \in C^2(N)$ is twisted {\bf (A1)}, we define the {\em $c$-exponential} on
\begin{align}\label{dom c-exp}\nonumber
\dom(c\text{-Exp}_{x} ) :=& - D c(x, \bar N(x)) \\
=& \{ p^* \in T^*_{x} M \  | \ p^* = - D c(x, \bar x) \hbox{ \ \ for some $\bar x \in \bar N(x)$}  \}
\end{align}
by $\cexp_{x} p^* = \bar x$ if $p^*=- D c(x,\bar x)$.
Non-degeneracy {\bf (A2)} then implies the $c$-exponential is a diffeomorphism from
$\dom(\cexp_{x}) \subset T^*_{x} M$ onto $\bar N(x) \subset \bar M$.  If $c\in C^2(N)$
is non-degenerate but not twisted and $q^*= -Dc(x,\bar y)$,  the implicit function theorem
implies a single-valued branch of $\cexp_x$ 
taking values near $\bar y$ is defined by the same formula
in a small neighbourhood of $q^*$,  though it no longer extends to be a global
diffeomorphism of $\dom(\cexp_{x})$ onto $\bar N(x)$.
\end{definition}

\begin{lemma}\label{L:c-segments are geodesics}
{\bf (the $c$-segments of \cite{MaTrudingerWang05} are geodesics)}
Use a non-degenerate cost $c \in C^4(N)$ to define a pseudo-metric
(\ref{metric}) on the domain $N \subset M \times \bar M$.
Fix ${x} \in M$. If $p^*,q^* \in \dom(\cexp_{x}) \subset T_{x}^*M$ are close enough
together there will be a branch of $\cexp_{x}$ defined on the line segment
joining $p^*$ to $q^*$.  Then the curve
$s \in [0,1] \longrightarrow %
\sigma(s) := (x,\cexp_{x} ((1-s)p^* + s q^*))$
is an affinely parameterized null geodesic in (N,h).
Conversely, every geodesic segment in the totally geodesic submanifold $\{x\} \times \bar N(x)$
can be parameterized locally in this way.
\end{lemma}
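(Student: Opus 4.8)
The plan is to verify directly that the curve $\sigma(s) = (x, \cexp_x((1-s)p^* + sq^*))$ satisfies the geodesic equation $\ddot\sigma^{k'} + \Gamma^{k'}_{i'j'}\dot\sigma^{i'}\dot\sigma^{j'} = 0$ using the explicit Christoffel symbols from Lemma~\ref{L:curvature tensor}, and then to argue the converse by a dimension-counting/uniqueness argument. First I would observe that $\sigma(s)$ lies entirely in the submanifold $\{x\} \times \bar N(x)$, so in the local coordinates $(x^i, x^{\bar j})$ the unbarred components $\sigma^i(s) \equiv x^i$ are constant; hence $\dot\sigma^i = \ddot\sigma^i = 0$ and only the barred components $\bar x(s) := \cexp_x((1-s)p^* + sq^*)$ vary. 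The geodesic equations then reduce: the unbarred equations read $\ddot\sigma^m + \Gamma^m_{\bar i\bar j}\dot\sigma^{\bar i}\dot\sigma^{\bar j} = 0$, but by (\ref{nonvanishing Christoffel symbols}) the symbol $\Gamma^m_{\bar i\bar j}$ vanishes (the only nonvanishing ones with an unbarred upper index are $\Gamma^m_{ij}$), so these hold automatically; the barred equations read $\ddot{\bar x}^{\bar m} + \Gamma^{\bar m}_{\bar i\bar j}\dot{\bar x}^{\bar i}\dot{\bar x}^{\bar j} = 0$ with $\Gamma^{\bar m}_{\bar i\bar j} = c^{\bar m k}c_{k\bar i\bar j}$.

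So the crux is to show $\ddot{\bar x}^{\bar m} + c^{\bar m k}c_{k\bar i\bar j}(x,\bar x(s))\,\dot{\bar x}^{\bar i}\dot{\bar x}^{\bar j} = 0$. To see this I would differentiate the defining relation of the cost-exponential: by definition of $\cexp_x$, the point $\bar x(s)$ satisfies $-c_k(x, \bar x(s)) = (1-s)p^*_k + sq^*_k =: \ell_k(s)$, an affine function of $s$. Differentiating in $s$ gives $-c_{k\bar i}(x,\bar x(s))\,\dot{\bar x}^{\bar i} = \dot\ell_k = q^*_k - p^*_k$, a constant; differentiating once more yields $-c_{k\bar i\bar j}\dot{\bar x}^{\bar i}\dot{\bar x}^{\bar j} - c_{k\bar i}\ddot{\bar x}^{\bar i} = 0$, i.e. $c_{k\bar i}\ddot{\bar x}^{\bar i} = -c_{k\bar i\bar j}\dot{\bar x}^{\bar i}\dot{\bar x}^{\bar j}$. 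Contracting with $c^{\bar m k}$ and using $c^{\bar m k}c_{k\bar i} = \delta^{\bar m}_{\bar i}$ (non-degeneracy, {\bf (A2)}) gives exactly $\ddot{\bar x}^{\bar m} = -c^{\bar m k}c_{k\bar i\bar j}\dot{\bar x}^{\bar i}\dot{\bar x}^{\bar j}$, which is the geodesic equation. The parameterization is affine because we used the natural parameter $s$ throughout. Nullity of the geodesic is then immediate: $h(\dot\sigma,\dot\sigma) = -c_{i\bar j}\dot\sigma^i\dot\sigma^{\bar j} = 0$ since $\dot\sigma^i = 0$; alternatively it follows from the structure (\ref{metric}) and the fact that $\{x\}\times\bar N(x)$ is a null submanifold, as already noted in Section~\ref{S:pseudo-Riemannian}.

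For the converse, I would argue that $\{x\}\times\bar N(x)$ is totally geodesic: a geodesic of $(N,h)$ tangent to it at a point has, by the structure of the Christoffel symbols just used, the property that its unbarred acceleration $\ddot\sigma^m = -\Gamma^m_{\bar i\bar j}\dot\sigma^{\bar i}\dot\sigma^{\bar j} = 0$ whenever $\dot\sigma^i = 0$, so if $\dot\sigma^i(0) = 0$ then $\dot\sigma^i(s) \equiv 0$ and the geodesic stays in the submanifold; this shows it is totally geodesic, as claimed in the statement. Then given any geodesic segment $\tau(s) = (x,\bar x(s))$ inside $\{x\}\times\bar N(x)$, set $p^* := -c_k(x,\bar x(0))$ and note from $c_{k\bar i}\ddot{\bar x}^{\bar i} + c_{k\bar i\bar j}\dot{\bar x}^{\bar i}\dot{\bar x}^{\bar j} = 0$ that the function $s \mapsto -c_k(x,\bar x(s))$ has vanishing second derivative, hence is affine in $s$, hence equals $(1-s)p^* + sq^*$ for $q^* := -c_k(x,\bar x(1))$ (on a short enough segment where a single branch of $\cexp_x$ is defined); applying that branch recovers $\bar x(s) = \cexp_x((1-s)p^* + sq^*)$ up to affine reparameterization. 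The main obstacle I anticipate is purely bookkeeping: making sure the branch of $\cexp_x$ is consistently defined along the whole segment (handled by the ``close enough together'' hypothesis and the implicit function theorem discussion in Definition~\ref{c-exp}), and being careful that ``locally'' in the converse means we may need to subdivide a long geodesic into short pieces each lying in the domain of a single branch. No deep estimate is needed; the computation is a two-line differentiation of the cost-exponential's defining identity.
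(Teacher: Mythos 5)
Your proposal is correct and follows essentially the same route as the paper's proof: differentiate the defining identity $-c_k(x,\bar x(s)) = (1-s)p^*_k + sq^*_k$ twice, contract with the inverse matrix $c^{\bar m k}$ to recover the geodesic equation via the Christoffel symbols of Lemma~\ref{L:curvature tensor}, read off nullity from the off-diagonal form of $h$, and for the converse integrate $\frac{d^2}{ds^2}c_k(x,\bar x(s))=0$ twice to identify $p^*,q^*$ and invert through a local branch of $\cexp_x$. The paper's verification that $\{x\}\times\bar N(x)$ is totally geodesic likewise rests on $\Gamma^m_{\bar i\bar j}=\Gamma^m_{i\bar j}=0$ and ODE uniqueness, just as you argue.
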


\begin{proof}
Given  $s_0 \in [0,1]$,
introduce coordinates on $M$ and $\bar M$ around $\sigma(s_0)$ so that nearby, the curve
$\sigma(s)$ can be represented in the form $(x^1,\ldots,x^n,x^{\bar 1}(s),\ldots,x^{\bar n}(s))$.
Differentiating the definition of the cost exponential
\begin{equation}\label{c-segment}
0 = (1-s)p^*_i + sq^*_i + c_i(\sigma(s))
\end{equation}
twice with respect to $s$ yields
\begin{equation}\label{pre-acceleration along c-segment}
0 
= c_{i {\bar j}} \ddot x^{\bar k} + c_{i {\bar j} {\bar k} } \dot x^{\bar j} \dot x^{\bar k}
\end{equation}
for each $i=1,\ldots,n$.  Multiplying by the inverse matrix $c^{{\bar m} i}$
to $c_{i {\bar j}}$ yields
\begin{equation}\label{acceleration along c-segment}
0 = \ddot x^{\bar m} + c^{{\bar m} i} c_{i {\bar j} {\bar k}} \dot x^{\bar j} \dot x^{\bar k},
\end{equation}
always summing on repeated indices.  Together with $\ddot x^m= 0=\dot x^m$,
we claim these reduce to the geodesic equations
\begin{eqnarray}\label{general geodesic}
0 &=& \ddot x^m + {\Gamma_{i j}}^m \dot x^i \dot x^j + {\Gamma_{\bar i j}}^m \dot x^{\bar i} \dot x^j
                + {\Gamma_{i \bar j}}^m \dot x^i \dot x^{\bar j} + {\Gamma_{\bar i \bar j}}^m \dot x^{\bar i} \dot x^{\bar j} \\
0 &=& \ddot x^{\bar m} + {\Gamma_{i j}}^{\bar m} \dot x^i \dot x^j + {\Gamma_{\bar i j}}^{\bar m} \dot x^{\bar i} \dot x^j
                + {\Gamma_{i \bar j}}^{\bar m} \dot x^i \dot x^{\bar j} + {\Gamma_{\bar i \bar j}}^{\bar m} \dot x^{\bar i} \dot x^{\bar j}
\nonumber
\end{eqnarray}
on $(N,h)$.
Indeed, this follows since the only non-vanishing Christoffel symbols
are given by (\ref{nonvanishing Christoffel symbols}).
Comparing (\ref{acceleration along c-segment}) with (\ref{general geodesic}),
we see $\sigma(s)$ is an affinely parameterized geodesic near $\sigma(s_0)$;
it is null since
$h(\dot \sigma,\dot \sigma) =0$ from the off-diagonal form of (\ref{metric}).

Conversely,  any geodesic segment in $(N,h)$ which lies in $x \times \bar N(x)$ can
be parameterized affinely on $s \in [0,1]$.  Near $s_0 \in [0,1]$ it then satisfies
(\ref{pre-acceleration along c-segment}), whence
\begin{equation}\label{cotangent line}
0 = \frac{d^2}{ds^2} c_{i}(x,\bar x(s)).
\end{equation}
Integrating twice,  the constants of integration determine $p^*$ and $q^* \in T^*_x M$
such that (\ref{c-segment}) holds locally. Thus $(1-s_0)p^* + s_0 q^* \in \dom(\cexp_x)$.
Choosing a branch of the cost exponential defined near this point and
equalling $D_i c(x,\bar x(s_0))$ there,  we deduce $(x,\bar x(s)) = \cexp_x ((1-s)p^* + sq^*)$
for $s$ near $s_0$ from the definition of this branch.

Finally,  to see that $\{x\} \times \bar N(x)$ is totally geodesic,  take any point
$\bar x \in \bar N(x)$ and tangent vector $\bar p \in T_{\bar x} \bar M$.  Setting $x^m(s) = x^m$ to be constant
solves half of the geodesic equations,  since $\Gamma^{m}_{{\bar i} j} =0 = \Gamma^m_{\bar i \bar j}$.
we can still solve the remaining $n$ components of the geodesic equation
(\ref{acceleration along c-segment}) for small $s \in \R$,
subject to the initial conditions $\bar x(0) = \bar x$ and $\dot {\bar x}(0) = q$, to find a geodesic
which remains in the $n$-dimensional submanifold $\{x\} \times \bar N(x)$ for short times.
\end{proof}

The next lemma gives a non-tensorial expression of the sectional curvature
in our pseudo-Riemannian geometry $(N,h)$.  In the context of Example \ref{E:Riemannian},
it can be viewed as a generalization of the asymptotic formula for the Riemannian distance
between two arclength parameterized geodesics $x(s)$ and $\bar x(t)$ near a point
of intersection $x(0)=\bar x(0)$ at angle $\theta$:
\begin{equation}\label{Riemannian law of cosines}
d^2(x(s),\bar x(t)) = s^2 + t^2 - 2 st \cos\theta - \frac{k}{3} s^2 t^2 \sin^2\theta + O((s^2+t^2)^{5/2})
\end{equation}
where the Riemannian curvature $k$ of the $2$-plane $\dot x(0) \wedge \dot {\bar x}(0)$
on $(M = \bar M, g)$ gives the leading order correction to the law of cosines.
Though we do not need it here,  the proof of the next lemma can also be adapted to establish
an expansion analogous to (\ref{Riemannian law of cosines}) for general costs
$c(x(s),\bar x(t))$; the zeroth and first order terms do not vanish,  but the coefficients
of $s^2 t$ and $s^3 t$ are zero due to the geodesy of $s\in [0,1] \longrightarrow \sigma(s)$.
Remarkably however, to determine the coefficient of $s^2 t^2$ in the lemma below
requires only one (in fact, either one) and not both of the two curves to be geodesic.

\begin{lemma}\label{L:mixed fourth derivative}
{\bf (Non-tensorial expression for curvature)}
Use a non-degenerate cost $c \in C^4(N)$ to define a pseudo-metric
(\ref{metric}) on the domain $N \subset M \times \bar M$.
Let $(s,t) \in \mathopen[-1,1\mathclose]^2 \longrightarrow (x(s),\bar x(t)) \in N$
be a surface
containing two curves $\sigma(s)=(x(s), \bar x(0))$ and
$\tau(t) = (x(0),\bar x(t))$ through 
$(x(0),\bar x(0))$.  Note $\0 \oplus \dot {\bar x}(0)$ defines a parallel vector-field along
$\sigma(s)$. If $s \in [-1,1] \longrightarrow \sigma(s) \in N$ is a geodesic in $(N,h)$
then
\begin{equation}\label{mixed fourth derivative}
- 2 \frac{\partial^4}{\partial s^2 \partial t^2}\bigg|_{s=0=t}
c(x(s),\bar x(t)) = \sec_{(x(0),\bar x(0))} 
\frac {d\sigma}{ds} \wedge \frac {d \tau}{dt}.
\end{equation}
\end{lemma}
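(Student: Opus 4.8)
The plan is to compute the fourth mixed partial $\partial_s^2 \partial_t^2 c(x(s),\bar x(t))$ directly in coordinates, taking advantage of the geodesy of $\sigma$ to kill unwanted terms, and then match the result against the tensorial sectional-curvature formula (\ref{sectional curvature}) evaluated on the pair $d\sigma/ds \oplus \0$, $\0 \oplus d\tau/dt$ using Lemma~\ref{L:curvature tensor}. First I would choose local coordinates $x^1,\ldots,x^n$ near $x(0)$ and $x^{\bar 1},\ldots,x^{\bar n}$ near $\bar x(0)$, write $v^i = \dot x^i(0)$ and $w^{\bar j} = \dot{\bar x}^{\bar j}(0)$, and observe that by the product structure the only surviving contractions in $\partial_s^2 \partial_t^2 c$ are those hitting each curve exactly twice; the terms involving $\ddot x$ and $\ddot{\bar x}$ must be tracked carefully. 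Differentiating $c(x(s),\bar x(t))$ twice in $t$ at $t=0$ gives $c_{\bar j \bar k}(x(s),\bar x(0)) w^{\bar j} w^{\bar k} + c_{\bar m}(x(s),\bar x(0)) \ddot{\bar x}^{\bar m}(0)$, and then differentiating twice more in $s$ at $s=0$ produces, schematically, $c_{ij\bar j\bar k} v^i v^j w^{\bar j} w^{\bar k} + c_{i\bar j\bar k} \ddot x^i(0) w^{\bar j} w^{\bar k} + c_{ij\bar m} v^i v^j \ddot{\bar x}^{\bar m}(0) + c_{i\bar m} \ddot x^i(0)\ddot{\bar x}^{\bar m}(0)$, plus terms with higher $s$-derivatives of $\bar x$ that vanish because $\bar x(t)$ does not depend on $s$.

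The key simplification is that $\sigma$ is a geodesic, so by Lemma~\ref{L:curvature tensor} (only Christoffel symbols ${\Gamma_{ij}}^m = c^{m\bar k}c_{\bar k ij}$ and the barred analogue are nonzero) the geodesic equation along $\sigma$, where only $x^i$ varies, reads $\ddot x^m(0) = -{\Gamma_{ij}}^m v^i v^j = -c^{m\bar k} c_{\bar k ij} v^i v^j$. Substituting this into the expression above, the $c_{i\bar m}\ddot x^i \ddot{\bar x}^{\bar m}$ term and the $c_{ij\bar m} v^i v^j \ddot{\bar x}^{\bar m}$ term combine: note $c_{ij\bar m} v^i v^j = -c_{i\bar m}\,(\text{something})$ is not quite right, but rather one uses $c_{i\bar m}\ddot x^i(0) = -c_{i\bar m} c^{i\bar k} c_{\bar k ab} v^a v^b = -\delta_{\bar m}^{\bar k} c_{\bar k ab} v^a v^b = -c_{\bar m ab} v^a v^b$ to see that the $c_{i\bar m}\ddot x^i \ddot{\bar x}^{\bar m}$ contribution cancels against part of one of the other terms. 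After the dust settles I expect
\[
-2 \,\partial_s^2\partial_t^2 c\big|_{0} = \big(c_{i\bar j\bar k\el} - c_{\el i \bar f} c^{a\bar f} c_{a\bar j\bar k}\big) v^i v^\el w^{\bar j} w^{\bar k} = 2 R_{i\bar j\bar k\el} v^i v^\el w^{\bar j} w^{\bar k},
\]
using (\ref{Riemann curvature}), and this matches the sectional curvature $\sec (d\sigma/ds \wedge d\tau/dt)$ from (\ref{sectional curvature}) because, by the vanishing pattern of $R$ in Lemma~\ref{L:curvature tensor}, the only surviving index assignments in the sum defining $\sec P\wedge Q$ for $P = v\oplus\0$, $Q = \0\oplus w$ are precisely those of type $R_{i\bar j\bar k\el}$ (and permutations related by the symmetries $-R_{\bar j i\bar k\el} = R_{i\bar j\bar k\el} = R_{\bar k\el i\bar j}$), which collect into the stated contraction.

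The main obstacle I anticipate is bookkeeping: correctly enumerating which of the many terms in $\partial_s^2\partial_t^2 c$ vanish by the product structure versus by geodesy, and getting the combinatorial factors right when the four surviving Riemann components (related by the curvature symmetries) are summed in (\ref{sectional curvature}) — it is easy to be off by a factor of $2$ or a sign. A secondary point worth a sentence is the asserted parallelism of $\0\oplus\dot{\bar x}(0)$ along $\sigma$: since along $\sigma$ only the unbarred coordinates vary and ${\Gamma_{i\bar j}}^{\bar m} = 0 = {\Gamma_{i\bar j}}^m$, the covariant derivative $\nabla_{\dot\sigma}(\0\oplus\dot{\bar x}(0))$ has components ${\Gamma_{i\bar j}}^{\bar m} v^i w^{\bar j} = 0$ and ${\Gamma_{i\bar j}}^m v^i w^{\bar j} = 0$, so the vector field is indeed parallel; this is the conceptual reason the formula depends on only one curve being geodesic, and it is also why one can legitimately evaluate the curvature as the failure of a certain second difference, but for the proof the brute-force coordinate computation above is the cleanest route.
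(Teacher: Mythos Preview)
Your proposal is correct and follows essentially the same approach as the paper: compute $\partial_s^2\partial_t^2 c$ directly in local coordinates, use the geodesic equation $\ddot x^m = -c^{m\bar k}c_{\bar k ij}\dot x^i\dot x^j$ for $\sigma$ to eliminate the $\ddot x$-terms (in particular the coefficient of $\ddot{\bar x}^{\bar m}$ vanishes exactly, not just partially, since $c_{a\bar m}\ddot x^a + c_{ij\bar m}\dot x^i\dot x^j = 0$), and then match against (\ref{Riemann curvature}) and (\ref{sectional curvature}); your side remark on parallelism via $\Gamma_{i\bar j}^{\bar m}=\Gamma_{i\bar j}^m=0$ is also exactly what the paper does.
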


\begin{proof}
Introduce coordinates $x^1,\ldots,x^n$ in a neighbourhood of $x(0)$ on $M$ and 
$x^{\bar 1},\ldots,x^{\bar n}$ in a neighbourhood of $\bar x(0)$ on $\bar M$,
so the surface $(x(s),\bar x(t)) \in N$ has coordinates
$(x^1(s),\ldots,x^n(s),x^{\bar 1}(t),\ldots,x^{\bar n}(t))$ locally.
To see $0 \oplus \dot {\bar x}(0)$ defines a parallel vector field along $\sigma(s)$,
we use the Levi-Civita connection to compute
$$
\dot \sigma^ i \nabla_i \dot x^{\bar k} + \dot \sigma^{\bar i} \nabla_{\bar i} \dot x^{\bar k}
= \dot x^i \frac{\partial \dot x^{\bar k}}{\partial x^i}
 + \dot x^i \Gamma_{i j}^{{\bar k}} \dot x^{j}
 + \dot x^i \Gamma_{i {\bar j}}^{{\bar k}} \dot x^{\bar j} + 0
= 0
$$
since the only non-vanishing Christoffel symbols are given by
(\ref{nonvanishing Christoffel symbols}).

Computing the fourth mixed derivative yields
\begin{eqnarray}
\label{compute mixed fourth derivative} 
&&\frac{\partial^4 }{\partial s^2 \partial t^2} \bigg|_{s=0=t} c(x(s),\bar x(t))
\\=&& c_{i j {\bar k} {\bar \el}} \dot x^i \dot x^j \dot x^{\bar k} \dot x^{\bar \el}
+ c_{a {\bar k} {\bar \el}} \ddot x^a \dot x^{\bar k} \dot x^{\bar \el}
+ (c_{i j {\bar b}} \dot x^i \dot x^j
+ c_{a {\bar b}} \ddot x^a) \ddot x^{\bar b}
\nonumber \\ =&& (c_{i {\bar k} {\bar \el} j}
- c_{{\bar k} {\bar \el} a} c^{a {\bar b}} c_{{\bar b} i j}
)\dot x^i \dot x^j \dot x^{\bar k} \dot x^{\bar \el}
\nonumber
\end{eqnarray}
where the form (\ref{acceleration along c-segment}) of the geodesic
equation has been used to eliminate the coefficient of $\ddot x^{\bar b}$
and express $\ddot x^a$ in terms of $\dot x^i$.
Comparing (\ref{compute mixed fourth derivative}) with (\ref{Riemann curvature})
and (\ref{sectional curvature}) yields the desired conclusion (\ref{mixed fourth derivative}).
The minus sign comes from antisymmetry
 $R_{i \bar k \bar \el j} = -R_{i \bar k j \bar \el}$ of the Riemann tensor.
\end{proof}




Our next contribution culminates in Theorem \ref{T:maximum principle},
which generalizes
the result that Loeper  \cite{Loeper07p} deduced
from Trudinger \& Wang \cite{TrudingerWang07p}. 
As mentioned above, it can be interpreted to mean that
if a weakly regular function $c \in C^4(N)$ governs the cost of transporting
a commodity from the locations where it is produced to the locations where it is consumed,
a shipper indifferent between transporting the commodity from
$y$ to the consumer at either endpoint $\bar x(0)$ and $\bar x(1)$ of the geodesic
$t \in [0,1] \longrightarrow  (y, \bar x(t))$ in $(N,h)$,
will also be indifferent to transporting goods from $y$
to the consumers at each of the intermediate points $\bar x(t)$ along this geodesic.
As was also mentioned, for non-degenerate costs Loeper showed
this conclusion fails unless the cost is weakly regular.

\begin{proposition}{\bf (Maximum principle)}\label{P:critical implies min}
Use a weakly regular cost $c \in C^4(N)$ to define a pseudo-metric
(\ref{metric}) on the domain $N \subset M \times \bar M$.  Given $x \ne y \in M$,
let $t\in]0,1[ \longrightarrow (x, \bar x(t)) \in N$ be a geodesic in $(N,h)$ with
$\dot {\bar x}(1/2) \ne \0$ and set $f(t) = - c(y,\bar x(t)) + c(x, \bar x(t))$.
If $\dot f(t_0)=0$ for some $t_0 \in]0,1[$ with
a geodesic linking $(x,\bar x(t_0))$ to $(y,\bar x(t_0))$
lying in $N(\bar x(t_0))\times \{\bar x(t_0)\}$, then $\ddot f(t_0) \ge 0$.
Strict inequality holds if the relevant cross-curvature of $c$ is positive
at some point on the second geodesic.
\end{proposition}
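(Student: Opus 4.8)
The plan is to reduce the assertion to a one-variable convexity statement along the second geodesic. Parametrize the geodesic linking $(x,\bar x(t_0))$ to $(y,\bar x(t_0))$ inside the null, totally geodesic submanifold $N(\bar x(t_0))\times\{\bar x(t_0)\}$ as $s\in[0,1]\longrightarrow\sigma(s)=(x(s),\bar x(t_0))$, with $x(0)=x$ and $x(1)=y$. Since $x\ne y$ the curve $\sigma$ is non-constant, so $\dot x(s)\ne\0$ for all $s$; and since $\dot{\bar x}(1/2)\ne\0$ the vertical geodesic has nowhere vanishing velocity, so $\dot{\bar x}(t_0)\ne\0$. Working in local coordinates near $(x,\bar x(t_0))$, set $G(s):=\frac{d^2}{dt^2}\big|_{t=t_0}c(x(s),\bar x(t))$, so that $\ddot f(t_0)=G(0)-G(1)$; the proposition becomes the inequality $G(1)\le G(0)$, strict under the stated positivity.

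The crux is to observe that the hypothesis $\dot f(t_0)=0$ is precisely what makes the relevant two-planes \emph{null all along} $\sigma$, so that weak regularity can be used at every point of $\sigma$. Both $\dot\sigma(s)=\dot x(s)\oplus\0$ (the velocity field of a geodesic) and $\0\oplus\dot{\bar x}(t_0)$ (parallel along $\sigma$, by the computation in Lemma \ref{L:mixed fourth derivative}) are parallel vector fields along $\sigma$, so $\beta(s):=c_{i\bar j}(x(s),\bar x(t_0))\dot x^i(s)\dot{\bar x}^{\bar j}(t_0)=-2\,h(\dot x(s)\oplus\0,\,\0\oplus\dot{\bar x}(t_0))$ is constant in $s$. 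But $\beta(s)=\frac{d}{ds}\big[c_{\bar j}(x(s),\bar x(t_0))\dot{\bar x}^{\bar j}(t_0)\big]$, so integrating over $[0,1]$ and using $f(t)=-c(y,\bar x(t))+c(x,\bar x(t))$ gives $\beta\equiv-\dot f(t_0)$. Hence $\dot f(t_0)=0$ forces $h(\dot x(s)\oplus\0,\,\0\oplus\dot{\bar x}(t_0))\equiv 0$, and then the off-diagonal form (\ref{metric}) of $h$ shows $p(s):=\dot x(s)\oplus\dot{\bar x}(t_0)$ is a null vector for every $s\in[0,1]$.

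Next I would compute $G''$. Shifting the origin in Lemma \ref{L:mixed fourth derivative} to each base point $(x(s),\bar x(t_0))$ is legitimate, since there only the $s$-curve $\sigma$ needs to be geodesic; this gives $G''(s)=-\tfrac12\sec_{(x(s),\bar x(t_0))}(\dot x(s)\oplus\0)\wedge(\0\oplus\dot{\bar x}(t_0))$. Since $p(s)$ is null, {\bf (A3w)} forces $G''(s)\le 0$ throughout $[0,1]$, i.e.\ $G$ is concave. To supply the remaining endpoint datum I would differentiate $G$ directly, obtaining $G'(s)=c_{i\bar j\bar k}\dot x^i\dot{\bar x}^{\bar j}\dot{\bar x}^{\bar k}+c_{i\bar m}\dot x^i\ddot{\bar x}^{\bar m}$ with the derivatives of $c$ at $(x(s),\bar x(t_0))$ and $\ddot{\bar x}^{\bar m}$ the coordinate acceleration of $t\longrightarrow\bar x(t)$ at $t_0$; at $s=0$ the vertical geodesic equation (\ref{acceleration along c-segment}), which expresses $\ddot{\bar x}^{\bar m}(t_0)$ at $(x,\bar x(t_0))$ as $-c^{\bar m a}c_{a\bar j\bar k}\dot{\bar x}^{\bar j}(t_0)\dot{\bar x}^{\bar k}(t_0)$, makes the two terms cancel (using that $(c^{\bar m a})$ and $(c_{a\bar j})$ are mutually inverse), so $G'(0)=0$.

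Finally, $G'(0)=0$ and $G''\le0$ on $[0,1]$ give $G'\le 0$ on $[0,1]$, hence $G$ is non-increasing and $\ddot f(t_0)=G(0)-G(1)\ge 0$. For strictness, positivity of the cross-curvature $\sec_{(x(s_*),\bar x(t_0))}(\dot x(s_*)\oplus\0)\wedge(\0\oplus\dot{\bar x}(t_0))$ at some $s_*\in[0,1]$ gives $G''(s_*)<0$, hence $G''<0$ on a subinterval of $[0,1]$ by continuity; combined with $G'(0)=0$ this makes $G'$ strictly negative on a subinterval of $(0,1]$ of positive length, whence $\int_0^1G'(s)\,ds<0$, i.e.\ $G(1)<G(0)$ and $\ddot f(t_0)>0$. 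The main obstacle is the first step --- seeing that the single scalar condition $\dot f(t_0)=0$ converts a pointwise inequality into the uniform nullity of $p(s)$ along $\sigma$, so that {\bf (A3w)}, which controls only null two-planes, is available at every point; thereafter the argument is a one-variable comparison, the only remaining delicate point being the cancellation that yields $G'(0)=0$.
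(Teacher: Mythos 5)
Your proposal is correct and follows essentially the same route as the paper's proof: the nullity of $\dot x(s)\oplus\dot{\bar x}(t_0)$ along the horizontal geodesic is extracted from $\dot f(t_0)=0$ via parallel transport of $\0\oplus\dot{\bar x}(t_0)$, Lemma \ref{L:mixed fourth derivative} and {\bf (A3w)} then give one-signed curvature of $s\mapsto\partial_t^2|_{t_0}c(x(s),\bar x(t))$, and the vertical geodesic equation supplies the vanishing first derivative at $s=0$. The only difference is cosmetic — you work with the concave, non-increasing function $G$ rather than the paper's convex, nonnegative $\partial^2 g/\partial t^2|_{t=t_0}$, which differ by sign and an additive constant.
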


\begin{proof}
Suppose $f(t)$ has a critical point at some $0<t_0<1$, so
\begin{eqnarray}\label{critical point}
0 = & \dot f(t_0)& = \left(\partial_{\bar i} c(x,\bar x(t_0)) - \partial_{\bar i} c(y,\bar x(t_0))\right) \dot x^{\bar i}(t_0);
\end{eqnarray}
we then claim $\ddot f(t_0) \ge 0$.

Let $s \in [0,1] \longrightarrow (x(s),\bar x(t_0))$ be a geodesic in
$N(\bar x(t_0)) \times \{\bar x(t_0) \}$ 
with endpoints
$x(0) = x$ and $x(1)= y$.  Setting $g(s,t) = -c(x(s),\bar x(t)) + c(x, \bar x(t))$,
Lemma \ref{L:mixed fourth derivative} yields
\begin{equation}\label{strict convexity}
\frac{\partial^4 g}{\partial s^2 \partial t^2} \bigg|_{(s, t_0)}
= \frac{1}{2}\sec_{(x(s),\bar x(t_0))} (\dot x(s) \oplus \0) \wedge (\0 \oplus \dot {\bar x}(t_0))
\ge 0,
\end{equation}
with the inequality following from weak regularity {\bf (A3w)} of the cost $c\in C^4(N)$,
as long as $\dot x(s) \oplus \0_{\bar x(t_0)}$ and $\0_{x(s)} \oplus \dot {\bar x}(t_0)$ are orthogonal vectors
on (N,h), or equivalently, as long as $\dot x(s) \oplus \dot {\bar x}(t_0)$ is null.
These vectors are non-vanishing since $x(0) \ne x(1)$ and $\dot{\bar x}(0) \ne \0$;
we now deduce their orthogonality from (\ref{critical point}), using subscripts to
distinguish which tangent space the zero vectors reside in.

Along the geodesic
$s \in [0,1] \longrightarrow (x(s),\bar x(t_0))$, the vector field
$\0_{x(s)} \oplus \dot {\bar x}(t_0)$ is parallel transported
according to Lemma \ref{L:mixed fourth derivative}.
Thus the inner product $\lambda$ of this vector field
with the tangent vector is independent of $s \in [0,1]$.
Define $q^*_{\bar i}(s) := \partial_{\bar i} c(x(s),\bar x(t_0)) \in T_{\bar x(t_0)}^* \bar M$,
so $\dot q^*_{\bar i}(s) = \partial_j \partial_{\bar i} c (x(s),\bar x(t_0)) \dot x^j(s)$. From
the form (\ref{metric}) of the pseudo-metric we discover
\begin{eqnarray*}
\lambda &=& h(\0_{x(s)} \oplus \dot {\bar x}(t_0), \dot x(s) \oplus \0_{\bar x(t_0)})
\\ &=& - \dot x^{\bar i}(t_0) \dot q^*_{\bar i}(s).
\end{eqnarray*}
Integrating this constant over $0<s<1$, (\ref{critical point}) yields the desired orthogonality
$$
\lambda =  \dot x^{\bar i}(t_0)(q^*_{\bar i}(0) - q^*_{\bar i}(1)) =0.
$$

Now (\ref{strict convexity}) shows
$\partial^2 g/\partial t^2|_{t=t_0}$ to be a convex function of $s \in [0,1]$.
We shall prove this convex function is minimized at $s=0$,  where it vanishes.  Introducing
coordinates $x^1,\ldots,x^n$ around $x=x(0)$ on $M$ and
$x^{\bar 1},\ldots,x^{\bar n}$ around $\bar x(t_0)$ on $\bar M$, we compute
\begin{eqnarray*}
\frac{\partial^2 g}{\partial t^2}\bigg|_{(s,t_0)}
&=& 
     -\Big[c_{{\bar i}}(x(s),\bar x(t)) \ddot x^{\bar i} + c_{{\bar i} {\bar j}}(x(s),\bar x(t)) \dot x^{\bar i} \dot x^{\bar j}
      \Big]^{(s,t_0)}_{(0,t_0)}
\\ \frac{\partial^3 g}{\partial s \partial t^2}\bigg|_{(s,t_0)}
&=& - (c_{{\bar i} k}(x(s),\bar x(t_0)) \ddot x^{\bar i} + c_{{\bar i} {\bar j} k}(x(s),\bar x(t_0)) \dot x^{\bar i} \dot x^{\bar j}) \dot x^k.
\end{eqnarray*}
When $s=0$,  the last line vanishes by the geodesic equation for
$t \in[0,1] \longrightarrow (x(0),\bar x(t))$, and the preceding line is
manifestly zero.  Thus the strictly convex function
$\partial^2 g/\partial t^2|_{t=t_0}$ must be nonnegative for $s \in [0,1]$
and, as initially claimed,  $\ddot f(t_0) = \partial^2 g/\partial t^2|_{(s,t)=(1,t_0)}$
is non-negative at any $t_0 \in \mathopen]0,1\mathclose[$ where $\dot f(t_0)=0$.

If the relevant cross-curvature of the cost is positive at one point $(x(s_0),\bar x(t_0))$,
then the non-negative function $\partial^2 g/\partial t^2|_{t=t_0}$ is strictly convex
(\ref{strict convexity}) on an interval around $s_0 \in [0,1]$;
since $\partial^2 g/\partial t^2|_{t=t_0}$ is minimized at $s=0$ it must then be
positive at $s=1$, to conclude the proof.
\end{proof}

\begin{definition}\label{D:visible set}{\bf (Illuminated set)}
Given $(x,\bar x) \in N$,  let $V(x,\bar x) \subset M$ denote those points
$y \in N(\bar x)$
for which there exists a curve from $(x,\bar x)$ to $(y,\bar x)$ in $N(\bar x) \times \{\bar x\}$
satisfying the geodesic equation on $(N,h)$.
\end{definition}

As a warm up to the {\em double mountain above sliding mountain} Theorem \ref{T:maximum principle},
let us derive a strong version of this result under the simplifying hypothesis
that the cost is strictly {\bf (A3s)} and not merely weakly regular {\bf (A3w)}.

\begin{corollary}{\bf (Strict maximum principle)}
Use a strictly regular cost $c \in C^4(N)$ to define a pseudo-metric
(\ref{metric}) on the domain $N \subset M \times \bar M$.  Let
$t\in[0,1] \longrightarrow (x, \bar x(t)) \in N$ be a geodesic in $(N,h)$ with
$\dot {\bar x}(0) \ne \0$.
Then for all $y \in \cap_{t\in[0,1]} V(x,\bar x(t))$ the function
$f(t) =-c(y,\bar x(t)) + c(x,\bar x(t))$ satisfies
$f(t) < \max\{f(0),f(1)\}$ on $0<t<1$.
\end{corollary}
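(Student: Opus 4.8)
The plan is to argue by contradiction, deriving the strict inequality from the (weak) maximum principle of Proposition~\ref{P:critical implies min} together with the strictness that {\bf (A3s)} provides. Note first that $f$ is $C^4$ on $[0,1]$: the geodesic $t \mapsto (x,\bar x(t))$ is smooth and stays in $N$, and $y \in \cap_{t} V(x,\bar x(t)) \subset \cap_t N(\bar x(t))$ guarantees $(y,\bar x(t)) \in N$ for all $t$, so both $c(x,\bar x(\cdot))$ and $c(y,\bar x(\cdot))$ are $C^4$. We may assume $y \ne x$ (the case $y=x$ giving $f \equiv 0$). Suppose the conclusion fails, so $f(t_1) \ge \max\{f(0),f(1)\}$ for some $t_1 \in \mathopen]0,1\mathclose[$. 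Then $f$, being continuous on the compact interval $[0,1]$, attains its maximum at some interior point $t_0 \in \mathopen]0,1\mathclose[$: if every maximizer were an endpoint we would have $f(t_1) < \max\{f(0),f(1)\}$, while if $f(t_1)=\max\{f(0),f(1)\}$ realizes the global maximum we simply take $t_0 = t_1$. At such an interior maximum the elementary second-derivative test gives $\dot f(t_0)=0$ and $\ddot f(t_0) \le 0$.

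Next I would verify the hypotheses of Proposition~\ref{P:critical implies min} at $t_0$. Since $t \mapsto (x,\bar x(t))$ is a geodesic of $(N,h)$ with velocity $\0 \oplus \dot{\bar x}(t)$, and a geodesic whose velocity vanishes at one parameter value must be constant by uniqueness for the geodesic equation, the hypothesis $\dot{\bar x}(0) \ne \0$ forces $\dot{\bar x}(t) \ne \0$ for every $t \in [0,1]$; in particular $\dot{\bar x}(1/2) \ne \0$ and $\dot{\bar x}(t_0) \ne \0$. Moreover $y \in V(x,\bar x(t_0))$ provides, by Definition~\ref{D:visible set}, a geodesic $s \in [0,1] \longrightarrow (x(s),\bar x(t_0))$ lying in $N(\bar x(t_0)) \times \{\bar x(t_0)\}$ with $x(0)=x$ and $x(1)=y$. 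Since a strictly regular cost is weakly regular, Proposition~\ref{P:critical implies min} applies and yields $\ddot f(t_0) \ge 0$.

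To reach a contradiction I would upgrade this to $\ddot f(t_0) > 0$ using the last sentence of Proposition~\ref{P:critical implies min}, which only requires the relevant cross-curvature $\sec_{(x(s),\bar x(t_0))}(\dot x(s) \oplus \0) \wedge (\0 \oplus \dot{\bar x}(t_0))$ to be positive at some $s$. Since $x(0)=x \ne y = x(1)$, the second geodesic is non-constant, so $\dot x(s) \ne \0$ for all $s$ by the same uniqueness argument; the proof of Proposition~\ref{P:critical implies min} moreover shows $\dot x(s) \oplus \dot{\bar x}(t_0)$ is null. With $\dot x(s) \ne \0$ and $\dot{\bar x}(t_0) \ne \0$, strict regularity {\bf (A3s)} forces that cross-curvature to be strictly positive for every $s$, hence $\ddot f(t_0) > 0$, contradicting $\ddot f(t_0) \le 0$ and proving the claim.

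I expect no serious obstacle: the corollary is essentially Proposition~\ref{P:critical implies min} read at an interior maximum of $f$, with {\bf (A3s)} converting the non-strict conclusion into a strict one. The only point needing care — and the main thing to get right — is the bookkeeping about nonvanishing velocities: one must know $\dot{\bar x}(t) \ne \0$ throughout $[0,1]$ (to meet the hypothesis $\dot{\bar x}(1/2) \ne \0$ and to keep $\dot{\bar x}(t_0) \ne \0$) and $\dot x(s) \ne \0$ throughout $[0,1]$ (to trigger the strict case of {\bf (A3s)}), both of which follow from uniqueness of geodesics once $\dot{\bar x}(0) \ne \0$ and $y \ne x$ are in hand.
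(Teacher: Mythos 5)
Your argument is correct and takes essentially the same route as the paper: both reduce the corollary to Proposition~\ref{P:critical implies min} applied at an interior critical point of $f$, using the nullity of $\dot x(s)\oplus\dot{\bar x}(t_0)$ together with the nonvanishing of both factors to invoke the strict case of {\bf (A3s)} and conclude $\ddot f(t_0)>0$. The only cosmetic difference is packaging --- you argue by contradiction at an interior maximum (where $\ddot f(t_0)\le 0$), whereas the paper notes that every interior critical point is a strict local minimum and deduces strict monotonicity of $f$ away from it --- and your explicit bookkeeping of why $\dot{\bar x}(t)\ne\0$ and $\dot x(s)\ne\0$ persist along the geodesics is a point the paper leaves implicit.
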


\begin{proof}
Let $t\in[0,1] \longrightarrow (x, \bar x(t)) \in N$ be a geodesic in $(N,h)$ with
$\dot {\bar x}(0) \ne \0$.  Given $y \in \cap_{t\in[0,1]} V(x,\bar x(t))$ and
set $f(t) =-c(y,\bar x(t)) + c(x,\bar x(t))$. Notice $y,x \in N(\bar x(t))$
so $f$ is $C^4$ smooth on $[0,1]$.  Proposition \ref{P:critical implies min} asserts
$\ddot f (t_0)>0$ at each interior critical point $\dot f(t_0)=0$.
Any critical point of $f$ in $\mathopen]0,1\mathclose[$ is therefore a local
minimum,  and $f$ is strictly monotone away from this point.  Thus
$f(t) < \max\{f(0),f(1)\}$ for $0<t<1$, as desired.
\end{proof}

If $N$ is horizontally convex then $V(x,\bar x(t))=N(\bar x(t))$,  which motivates
the relation of this corollary to Theorem \ref{T:second stab}.
Let us now show a weak version of this maximum principle survives
as long as the cost is weakly regular.
To handle this relaxation we use a level set approach.

\begin{lemma}\label{L:level set}{\bf (Level set evolution)}
Let $g \in C^2 \left(\mathopen]\epsilon,\epsilon\mathclose[ \times U \right)$ where
$U \subset \Rn$ is open.  Suppose $D g = (\partial_1 g,\ldots, \partial_n g)$ is
non-vanishing on $\mathopen]\epsilon,\epsilon\mathclose[ \times U$. Then the
zero set $S(t) = \{ x \in U \mid g(t,x)=0\}$ is a $C^2$-smooth $n-1$ dimensional submanifold
of $U$ which can be parameterized locally for small enough $t$ by $\{ X(t,z) \mid z \in S(0)\}$,
where the Lagrangian variable $X(t,x)$ solves the ordinary differential equation
\begin{equation}\label{Lagrangian ODE}
\frac{\partial X(t,x)}{\partial t} = -
\left[\frac{\partial g}{\partial t} \frac{Dg}{|Dg|^2}\right]_{(t,X(t,x))}
\end{equation}
subject to the initial condition $X(t,x)=x$.
Moreover, the positivity set
$S^+(t) = \{x \in U \mid g(t,x) \ge 0\}$ has
$S(t)$ as its boundary,  and expands with an outward normal velocity given by
\begin{equation}\label{normal velocity}
v = - \frac{\partial X}{\partial t}\bigg|_{(t,z)} \cdot \frac{Dg}{|Dg|}\bigg|_{(t,X(t,z))}
= \frac{\partial g/\partial t}{|Dg|}\bigg|_{(t,X(t,z))}.
\end{equation}
\end{lemma}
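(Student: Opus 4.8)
The plan is to combine the implicit function theorem applied to the snapshots $g(t,\cdot)$ with the flow of an associated time-dependent vector field. First I would apply the implicit function theorem: near any $x_0\in S(t_0)$, since $Dg(t_0,x_0)\ne \0$, the equation $g(t,x)=0$ determines $x$ as a $C^2$ function of $t$ and of the $n-1$ coordinates along the hyperplane $Dg(t_0,x_0)^\perp$. This exhibits $S(t)$ as a $C^2$ graph, hence a $C^2$-smooth embedded $(n-1)$-dimensional submanifold of $U$. The same local graph picture shows that $g$ changes sign cleanly across $S(t)$, so that $S^+(t)=\{g(t,\cdot)\ge 0\}$ is locally a half-space with boundary $\partial S^+(t)=S(t)$, and its outward unit normal (pointing into $\{g<0\}$) is
\[
\normal(t,x) = -\,\frac{Dg(t,x)}{|Dg(t,x)|}.
\]

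Next I would introduce the vector field $V(t,x):=-\bigl[(\partial g/\partial t)\,Dg/|Dg|^2\bigr](t,x)$, which is continuous in $t$ and $C^1$ in $x$ on $]-\epsilon,\epsilon[\,\times U$, since $|Dg|^2$ is $C^1$ and locally bounded away from zero. By the basic existence--uniqueness theorem for ordinary differential equations together with $C^1$-dependence on initial data, each $x_0\in U$ has a neighbourhood $W$ and a $\delta>0$ so that the solution $X(t,x)$ of $\partial_t X = V(t,X)$, $X(0,x)=x$, exists and is $C^1$ in $(t,x)$ for $x\in W$, $|t|<\delta$, and the time-$t$ map $\Phi_t:=X(t,\cdot)$ is a $C^1$ diffeomorphism onto its image (its inverse being the reverse non-autonomous flow). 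The key identity comes from differentiating along a trajectory:
\[
\frac{d}{dt}\, g\bigl(t,X(t,x)\bigr)
= \frac{\partial g}{\partial t}\bigl(t,X(t,x)\bigr) + Dg\bigl(t,X(t,x)\bigr)\cdot V\bigl(t,X(t,x)\bigr)=0,
\]
so $g(t,X(t,x))=g(0,x)$ for $|t|<\delta$. Hence $z\in S(0)$ forces $X(t,z)\in S(t)$; conversely, if $y\in S(t)$ is close to $X(t,z_0)$, then $z:=\Phi_t^{-1}(y)\in W$ satisfies $g(0,z)=g(t,X(t,z))=g(t,y)=0$, i.e.\ $z\in S(0)$. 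Thus $\Phi_t$ restricts to a diffeomorphism of a neighbourhood of $z_0$ in $S(0)$ onto a neighbourhood of $X(t,z_0)$ in $S(t)$, which is the asserted local parameterization $\{X(t,z)\mid z\in S(0)\}$.

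Finally, the normal velocity is a one-line computation: for $z\in S(0)$ the moving point $X(t,z)\in S(t)$ has velocity $\partial X/\partial t|_{(t,z)}=V(t,X(t,z))$, so the signed outward normal speed of $\partial S^+(t)$ is its component along $\normal$, namely
\[
v = \frac{\partial X}{\partial t}\Big|_{(t,z)}\cdot \normal\bigl(t,X(t,z)\bigr)
= -\frac{\partial X}{\partial t}\Big|_{(t,z)}\cdot \frac{Dg}{|Dg|}\Big|_{(t,X(t,z))}
= \frac{(\partial g/\partial t)\,|Dg|^2}{|Dg|^{3}}\Big|_{(t,X(t,z))}
= \frac{\partial g/\partial t}{|Dg|}\Big|_{(t,X(t,z))},
\]
which is (\ref{normal velocity}), positive precisely where $S^+(t)$ is expanding. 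I do not anticipate a serious obstacle here: the statement is essentially a repackaging of the implicit function theorem and of smooth dependence for ODEs, and the only point requiring care is the bookkeeping in the middle paragraph --- invoking smooth dependence for the \emph{non-autonomous} flow, its local invertibility, and patching the local parameterizations of $S(t)$ together consistently.
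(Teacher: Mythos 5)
Your proposal is correct and follows essentially the same route as the paper's proof: the implicit function theorem gives the $C^2$ hypersurface structure and identifies $S(t)$ with $\partial S^+(t)$, the Lagrangian ODE is obtained by differentiating $g(t,X(t,z))=0$ along trajectories and integrated for short time, and the normal velocity is the same chain-rule computation. You simply spell out a few details the paper leaves implicit (the non-autonomous flow's invertibility and the two-way correspondence between $S(0)$ and $S(t)$).
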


\begin{proof}
Clearly the boundary of $S^+(t)$ is contained in $S(t)$.
Since $Dg \ne 0$,  the implicit function theorem implies $S(t)$ is a
$C^2$-smooth hypersurface and separates regions where $g(t,x)$ takes
opposite signs.  Thus $S(t)$ is contained in and hence equal to the boundary
in $U$ of the positivity set.  If the desired parametrization exists it must satisfy
$0=g(t,X(t,z))$.  Differentiation in time yields an equation
$$
0 = \frac{\partial g}{\partial t}(t,X(t,z))
+  Dg(t,X(t,z)) \cdot \frac{\partial X}{\partial t}\bigg|_{(t,z)}
$$
easily seen to be equivalent to (\ref{Lagrangian ODE}).  Conversely, near a
point $(0,z) \in \mathopen]-\epsilon,\epsilon\mathclose[ \times S(0)$, the
$C^1$ vector field (\ref{Lagrangian ODE}) can be integrated for a short time
(depending on $z$) to yield the desired parametrization.
\end{proof}

\begin{theorem}\label{T:maximum principle}{\bf (Double mountain above sliding mountain)}
Use a weakly regular cost $c \in C^4(N)$ to define a pseudo-metric
(\ref{metric}) on the domain $N \subset M \times \bar M$.  Let
$t\in]0,1[ \longrightarrow (x, \bar x(t)) \in N$ be a geodesic in $(N,h)$.
If $\mathopen]t_0,t_1\mathclose[\times \{y\}$ lies in the interior of
\begin{equation}\label{sliding domain}
\Lambda := \{ (t,y) \in \mathopen[0,1\mathclose] \times M \mid y \in V(x,\bar x(t)) \},
\end{equation}
then $f(t) =-c(y,\bar x(t)) + c(x,\bar x(t)) \le \max\{f(t_0^+),f(t_1^-)\}$
on $0\le t_0<t<t_1 \le 1$, where
\begin{equation}\label{discontinuous limits}
f(t_0^+) = \lim_{\epsilon \searrow 0} f(t_0 + \epsilon), \qquad
f(t_1^-) = \lim_{\epsilon \searrow 0} f(t - \epsilon).
\end{equation}
\end{theorem}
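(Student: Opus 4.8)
The plan is to reduce the global conclusion to the pointwise second-derivative test of Proposition~\ref{P:critical implies min}, treating the borderline case of vanishing cross-curvature with the level-set machinery of Lemma~\ref{L:level set}. \emph{Reduction.} Since $]t_0,t_1[\,\times\{y\}$ lies in the interior of $\Lambda$, every $\tau\in\,]t_0,t_1[$ has $y\in V(x,\bar x(\tau))\subset N(\bar x(\tau))$ and $x\in N(\bar x(\tau))$, so $(x,\bar x(\tau)),(y,\bar x(\tau))\in N$ and $t\mapsto f(t)=-c(y,\bar x(t))+c(x,\bar x(t))$ is $C^4$ on $]t_0,t_1[$ (the geodesic $t\mapsto(x,\bar x(t))$ is at least $C^4$ in $t$). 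By continuity of $f$ on the open interval and the definition of $f(t_0^+),f(t_1^-)$ in (\ref{discontinuous limits}), it suffices to prove $f(t)\le\max\{f(a),f(b)\}$ on $[a,b]$ for each compact $[a,b]\subset\,]t_0,t_1[$, and then let $a\searrow t_0$, $b\nearrow t_1$. Discarding the trivial cases $x=y$ and $\bar x(\cdot)$ constant, we may assume $\dot{\bar x}$ never vanishes; what remains is to rule out an interior point of $[a,b]$ at which $f$ exceeds $\max\{f(a),f(b)\}$.

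\emph{Critical points are minima.} At any interior critical point $\tau\in\,]a,b[$ of $f$, the hypothesis $y\in V(x,\bar x(\tau))$ supplies (Definition~\ref{D:visible set}) a geodesic in $N(\bar x(\tau))\times\{\bar x(\tau)\}$ joining $(x,\bar x(\tau))$ to $(y,\bar x(\tau))$, so Proposition~\ref{P:critical implies min} yields $\ddot f(\tau)\ge0$, with $\ddot f(\tau)>0$ if the cross-curvature of $c$ is positive somewhere along that second geodesic. In the strictly regular case {\bf (A3s)} this finishes the proof: critical points are then isolated strict local minima, so $f$ has at most one of them on $]a,b[$ and is strictly monotone on either side, whence $f\le\max\{f(a),f(b)\}$; this is precisely the Strict Maximum Principle already recorded above.

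\emph{Weakly regular case.} Under {\bf (A3w)} alone, Proposition~\ref{P:critical implies min} gives only $\ddot f\ge0$ at critical points, which does \emph{not} by itself forbid an interior maximum exceeding the endpoint values (a critical point with $\ddot f=0$ can be a strict maximum, as for $t\mapsto-t^4$). To defeat this degeneracy one uses the entire $n$-parameter family of mountains $\tilde v_t(\cdot):=-c(\cdot,\bar x(t))+c(x,\bar x(t))$ rather than the single function $f(t)=\tilde v_t(y)$. Writing $u:=\max\{\tilde v_a,\tilde v_b\}$ for the double mountain, the inequality to prove (for every $y'\in\cap_{t\in[a,b]}V(x,\bar x(t))$, which contains our $y$) is exactly $\{\tilde v_t-\tilde v_a>0\}\cap\{\tilde v_t-\tilde v_b>0\}=\emptyset$ for all $t\in[a,b]$: the sliding mountain never pokes through the double mountain. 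This set is void at $t=a$ and $t=b$; assuming it is nonempty for some $t^{*}\in\,]a,b[$, fix $0<\varepsilon<\sup_{y'}(\tilde v_{t^{*}}(y')-u(y'))$ and study $g(t,y'):=\tilde v_t(y')-u(y')-\varepsilon$ (which is $C^2$ off the crease $\{\tilde v_a=\tilde v_b\}$ of $u$) on a neighbourhood of the locus $\{g\ge0\}$ --- compact and interior, by the $\Lambda$-hypothesis together with continuity of $c$ up to $\cl N$. The positivity set $S^{+}(t)=\{g(t,\cdot)\ge0\}$ is empty near $a$ and near $b$; for $t$ just past the first time $t_\varepsilon$ at which it is born, Lemma~\ref{L:level set} describes $S^{+}(t)$ and the outward normal velocity (\ref{normal velocity}) of its boundary, and letting $t\searrow t_\varepsilon$ one finds $S^{+}(t_\varepsilon)=\{y_\varepsilon\}$, an interior maximum of $\tilde v_{t_\varepsilon}-u$ of height $\varepsilon$, at which $D_{y'}\tilde v_{t_\varepsilon}=D_{y'}u$, the Hessian inequality $D_{y'}^2\tilde v_{t_\varepsilon}\le D_{y'}^2u$ holds, and $\partial_t\tilde v_t(y_\varepsilon)\ge0$. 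Reading the identity $D_{y'}\tilde v_{t_\varepsilon}(y_\varepsilon)=D_{y'}\tilde v_a(y_\varepsilon)$ (or $=D_{y'}\tilde v_b(y_\varepsilon)$, according to the active branch of $u$) through non-degeneracy {\bf (A2)} and the geodesic descriptions of Lemmas~\ref{L:c-segments are geodesics} and~\ref{L:mixed fourth derivative}, one recognises at $(t_\varepsilon,y_\varepsilon)$ exactly the configuration of Proposition~\ref{P:critical implies min}; its conclusion $\ddot{}\ge0$, combined with the Hessian and velocity data forced by the first-contact geometry, contradicts $y_\varepsilon$ being the \emph{first} point where the sliding mountain touches the double mountain $\varepsilon$-strictly. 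Sending $\varepsilon\searrow0$ then recovers the inequality everywhere, including on the crease $\{\tilde v_a=\tilde v_b\}$ where $y$ itself might lie.

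\emph{Main obstacle.} The crux is the last paragraph: making the first-contact/level-set argument rigorous --- keeping the maximizing point $y_\varepsilon$ interior to the working chart (for which the illuminated-set hypothesis and the continuity of $c$ on $\cl N$ are indispensable), handling the Lipschitz crease of the double mountain $u$, and extracting from the degenerate tangential contact precisely the hypotheses of Proposition~\ref{P:critical implies min} so that its possibly non-strict conclusion still produces a contradiction. Everything preceding it is routine bookkeeping around the already-established pointwise estimate.
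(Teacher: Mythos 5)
Your reduction and your treatment of the strictly regular case are fine, and you correctly identify the real difficulty: under {\bf (A3w)} alone, $\ddot f\ge 0$ at critical points does not by itself exclude a degenerate interior maximum. But the first-contact scheme you propose for the weakly regular case --- which is the entire content of the theorem --- has a structural flaw I do not see how to repair along the lines you describe. Lemma~\ref{L:level set} requires $D_{y'}g\ne 0$ throughout the working region, and at the very point where you need it, namely the birth point $y_\varepsilon$ of the positivity set of $g(t,y')=\tilde v_t(y')-u(y')-\varepsilon$, the spatial gradient vanishes, since $y_\varepsilon$ is an interior maximum of $g(t_\varepsilon,\cdot)$; so the lemma gives you nothing there. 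The paper's proof applies the level-set lemma to a different function, namely $g=\partial f/\partial t$ regarded as a function of $y$, whose spatial gradient $-D\bar Dc(y,\bar x(t))\,\dot{\bar x}(t)$ is automatically non-vanishing by {\bf (A2)}. The resulting statement is that the ``rising region'' $S^+(t)=\{y \mid \partial f/\partial t\ge 0\}$ is non-decreasing in $t$, with Proposition~\ref{P:critical implies min} supplying $\partial^2 f/\partial t^2\ge 0$ on its boundary; for the fixed $y$ of the statement this forces $f(\cdot,y)$ to be decreasing on an initial interval and non-decreasing thereafter, which is exactly the asserted maximum principle, with no contradiction argument and no double mountain needed.

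Beyond that, the contradiction you hope to extract at $(t_\varepsilon,y_\varepsilon)$ is never derived. The first-order condition at your contact point is the vector identity $Dc(y_\varepsilon,\bar x(t_\varepsilon))=Dc(y_\varepsilon,\bar x(a))$ (or membership of $-Dc(y_\varepsilon,\bar x(t_\varepsilon))$ in the convex hull of the two endpoint gradients if $y_\varepsilon$ sits on the crease of $u$), which is not the scalar orthogonality $\dot f(\tau)=0$ that Proposition~\ref{P:critical implies min} takes as input; you give no derivation of one from the other, and no account of how ``the Hessian and velocity data'' yield a contradiction when the second derivative supplied by that proposition may vanish. You also assert that $\{g\ge 0\}$ is compact and interior ``by the $\Lambda$-hypothesis,'' but that hypothesis concerns only the slice $\mathopen]t_0,t_1\mathclose[\times\{y\}$ and controls nothing about $\tilde v_t-u$ away from $y$, so your maximum could be attained on the boundary of the working chart, destroying the interior first-order conditions. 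Your own closing paragraph concedes that this step is the crux and is not carried out; as written, the weakly regular case --- the theorem itself --- is not proved.
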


\begin{proof}
Fix a geodesic $t\in]0,1[ \longrightarrow (x, \bar x(t)) \in N$ with
$\dot{\bar x}(1/2) \ne \0$, since otherwise the conclusion is obvious.
Note that $\cexp_x$ and hence $t \in \mathopen]0,1[ \longrightarrow {\bar x}(t)$ are
$C^3$ smooth,  from Lemma \ref{L:c-segments are geodesics} and {\bf (A0)}.
For all $y \in M$ and $t \in \mathopen]0,1[$ set
$$
f(t,y) = -c(y,\bar x(t)) + c(x,\bar x(t))
$$
and note that $f(t,y)$ is $C^3$-smooth on the interior of $\Lambda \subset [0,1] \times M$.
Define
\begin{eqnarray*}
S^+ &:=& \{ (t,y) \in \intr \Lambda \mid \frac{\partial f}{\partial t} \ge 0\}
\\S &:=& \{ (t,y) \in \intr \Lambda \mid \frac{\partial f}{\partial t} = 0\}.
\end{eqnarray*}
For each $t\in \mathopen]0,1[$,
we think of $f(t,y)$ as defining the elevation of a landscape over
$M$, which evolves from $f(y,0)$ to $f(y,1)$ as $t$
increases, and is normalized so that $f(x,t)=0$.
We picture $f(y,t)$ as a sliding mountain, with
$S^+(t) := \{y \in M \mid (t,y) \in S^+\}$ denoting the rising region,
and $S(t):= \{y \in M \mid (t,y) \in S\}$ the region at the boundary of $S^+(t)$ which
--- instantaneously --- is neither rising nor sinking.

We claim the rising region $S^+(t) \subset M$ is a non-decreasing function of
$t \in \mathopen]t_0,t_1\mathclose[$.
To see this, we plan to apply Lemma \ref{L:level set} to the
$C^2$ function
\begin{equation}\label{level set function}
g(t,y)
:= \frac{\partial f}{\partial t}
= -\bar D c(y,\bar x(t))\dot {\bar x}(t) + \bar Dc(x,\bar x(t)) \dot {\bar x}(t)
\end{equation}
on $\Lambda$. Differentiating this function with respect to $y \in M$ yields
\begin{eqnarray*}
Dg(t,y) &=& - D \bar D c(y,\bar x(t)) \dot {\bar x}(t)
\\ &\ne& \0
\end{eqnarray*}
which is non-vanishing because of {\bf (A2)}.  Applying Lemma \ref{L:level set}
on any coordinate chart in $M$ shows $S(t)$ is the boundary of
$S^+(t)$,  and the question of whether $S^+(t)$ is expanding or
contracting along its boundary is determined by the sign of
$\partial g/\partial t$ on $S(t)$.

 From the definition of $V(x,\bar x(t'))$ observe
$y \in S(t') \subset V(x,\bar x(t'))$ implies $(y,\bar x(t'))$ is linked to
$(x,\bar x(t'))$ by a curve in $N(\bar x(t')) \times \{\bar x(t')\}$ which is geodesic
in $(N,h)$.  Proposition \ref{P:critical implies min} asserts
$\partial^2 f/\partial t^2|_{(t',y)} \ge 0$,  so from Lemma \ref{L:level set} there is
a neighbourhood of $y$ on which the rising region $S^+(t)$ does not shrink for a short
time interval around $t' \in ]0,1[$.

Finally,  fix $y \in M$ and $0\le t_0<t_1 \le 1$
such that $\mathopen]t_0,t_1\mathclose[ \times \{y\} \subset \intr\Lambda$
and let $U \subset \mathopen]t_0,t_1\mathclose[$ denote the
open set of times at which $y \not\in S^+(t)$.
If $U$ is non-empty,
we claim any connected component
of $U$ has $t=t_0$ in its closure.  If not,  let $t' \in \mathopen]t_0,t_1\mathclose[$
denote the left endpoint of a connected component in $U$.  This means $y \in S^+(t')$
but $y \not\in S^+(t'+\delta)$ for any $\delta >0$, in violation of the non-shrinking
property of $S^+(t)$ derived above.  Thus 
$t \in \mathopen]t_0,t_1\mathclose[ \longrightarrow f(t,y)$ is decreasing on an
interval $U= \mathopen]t_0, t(y)\mathclose[$ for some $t(y) \in [t_0,t_1]$
and non-decreasing on the complementary interval $]t(y),t_1\mathclose[$.
The limits (\ref{discontinuous limits}) exist and the proof that
$f(t,y) \le \max\{f(t_0^+),f(t_1^-)\}$ is complete.
\end{proof}

\begin{remark}[A geodesic hypersurface bounds the rising region]
Lemma \ref{L:c-segments are geodesics} implies
$S(t) \times \{\bar x(t)\}$ is a totally geodesic submanifold for each $t \in ]0,1[$
of the preceding proof.  Indeed, (\ref{level set function}) shows
$\bar q^* \in \dom (\cdexp_{\bar x(t)}) \subset T^*_{\bar x(t)} \bar M$
belongs to the hyperplane $( \bar D c(x,\bar x(t)) + \bar q^*) \dot {\bar x}(t)=0$,
if and only if $y := \cdexp_{\bar x(t)} \bar q^*$ lies on $S(t)$.
\end{remark}

\begin{remark}\label{R:Lambda is open}
If the domain $N$ is horizontally convex then $V(x,\bar x) = N(\bar x)$ and $\Lambda$ from
(\ref{sliding domain}) are open sets.  Then $]0,1[ \times \{y\} \in \Lambda$ if and only if
$y \in \cap_{0<t<1} N(\bar x(t))$.
\end{remark}

\begin{remark}[Enhancements]
Villani has incorporated a version of our proof into \cite{Villani06},
adding simplifications which allow him to avoid the use of the level set method.
He further develops our technique to prove additional results: contrast our
Theorem \ref{T:supporting c-convex},  which gives a new approach to a result
of Loeper, with Villani's Theorem 12.41, 
which gives a new approach to a result of Trudinger \& Wang.
\end{remark}

\section{Perspective and conclusions}

Before concluding this paper,  let us briefly review the connection of
the transportation problem (\ref{Kantorovich}) we study with fully
non-linear partial differential equations.  Although this connection goes
almost back to Monge \cite{Monge81},  it has developed dramatically
since the work of Brenier \cite{Brenier87}.
When an optimal mapping $F:M \longrightarrow \bar M$ exists and happens
to be a diffeomorphism,  it provides
a change of variables between $(M,\rho)$ and $(\bar M,\bar \rho)$, hence
its Jacobian satisfies the equation
\begin{equation}\label{prescribed determinant}
 \bar \rho(F(x)) |\det DF(x)| = \rho(x).
\end{equation}
Often when $F$ is not smooth,
(\ref{prescribed determinant}) remains true almost everywhere
\cite{McCann95} \cite{Cordero-Erausquin04} \cite{AmbrosioGigliSavare05}.

Optimality implies that $F(\cdot)$ can be related to pair of scalar functions
$u:M \longrightarrow \R \cup \{+\infty\}$ and
$\bar u: \bar M \longrightarrow \R \cup \{+\infty\}$,
which arise from the linear program dual to (\ref{Kantorovich}),
and represent Lagrange multipliers for the prescribed densities $\rho$ and $\bar \rho$.
Moreover, $u$ can be taken to belong to the class of {\em $c$-convex} functions,
meaning it can be obtained as the upper envelope of a family of mountains --- analogous to
(\ref{double mountain}) and (\ref{triple peak}) but with $i$ 
ranging over $\bar \rho$-almost
all of $\bar M$;  similarly, $\bar u$ is a $c^*$-convex function, as in
Definition \ref{D:c-convex function} below.
When the twist condition holds, $F = \cexp \circ Du$.  If non-degeneracy {\bf (A2)} also
holds,  then (\ref{prescribed determinant}) becomes an equation of Monge-Amp\`ere
type expressed as in \cite{MaTrudingerWang05}
using local coordinates around $(x',F(\bar x'))$ by
\begin{equation}\label{Monge-Ampere type}
\det [u_{ij} + c_{ij}]_{(x,c{\rm -Exp}_x Du(x))} =
\frac{\rho(x)}{\bar \rho(\cexp_x Du(x))} |\det 
c_{i\bar k} |_{(x,c{\rm -Exp}_x Du(x))};
\end{equation}
$c$-convexity of $u$ implies non-negative definiteness of the matrix
 $u_{ij} + c_{ij}$ of second derivatives,
hence the equation is degenerate elliptic \cite{GangboMcCann96}.
Without further assumptions on $(M,\rho)$ and the geometry of $(\bar M,\bar \rho)$
we can expect neither strict ellipticity nor regularity of solutions:
if the support of $\rho$ is connected but
the support of $\bar \rho$ is not,  this will force $F$ to be discontinuous
and $u \not\in C^1(M)$.  The equation is therefore not locally smoothing, and the best
one can hope is for solutions to inherit regularity from the boundary data
$(M,\rho)$ and $(\bar M,\bar \rho)$.

For the cost function $c(x,\bar x)=|x-\bar x|^2/2$ on $M, \bar M \subset \Rn$,
(\ref{Monge-Ampere type}) becomes the familiar Monge-Amp\`ere equation for the
convex function $u(x) + |x|^2/2$ \cite{Brenier87}.  In this case
Caffarelli \cite{Caffarelli92} was able to show {\em convexity} of $\bar M$ implies
local H\"older continuity of $F$ if the densities
${d\rho}/{d\vol} \in L^\infty(M)$ and ${d\vol }/{d\bar \rho} \in L^\infty(\bar M)$
are bounded,  and smoothness of $F$ on the interior of $M$
if the densities $\log|d\rho / d\vol|$ and $\log|d\bar \rho / d\vol|$ are bounded and smooth;
see also Delano\"e \cite{Delanoe91} for the case $n=2$,  and
Wang \cite{Wang96} for analogous results in the context of
Example \ref{E:reflector antenna}, the reflector antenna problem.
The results of Loeper and Ma, Trudinger \& Wang
extend the H\"older \cite{Loeper07p} and smooth  \cite{MaTrudingerWang05} \cite{TrudingerWang08p}
theories to general bi-twisted, strictly regular costs on $\cl (M \times \bar M)$.
Loeper in particular achieves stronger results such as a global H\"older estimate
with explicit exponent under weaker restrictions on $\rho$ and $\bar \rho$
by exploiting strict regularity of the cost;
see Theorems \ref{T:Hoelder} and \ref{T:regular sphere}.
Trudinger \& Wang extended the
up-to-the-boundary regularity results of Caffarelli \cite{Caffarelli96}
and Urbas \cite{Urbas97} --- which require convexity and smoothness of $M\subset \Rn$ as well as
$\bar M$ --- to bi-twisted costs which are merely weakly regular \cite{TrudingerWang07p}.
For bounded and sufficiently smooth densities,  horizontal and vertical convexity of
$N = M \times \bar M$
takes the place of the convexity assumptions on the target $\bar M$ and domain $M$
in these theories.

Since we had not previously encountered pseudo-Riemannian geometry of any signature other
than the Lorentzian one $(n,1)$ in applications,  much less as the necessary and sufficient
condition for degenerate elliptic partial differential equations to possess smooth solutions,
a few words of explanation seem appropriate.\footnote{
However, as we learned subsequently from Robert Bryant,
the wedge product $\omega \wedge \omega$ can be used to define a signature $(3,3)$
pseudo-metric on the space $\R^4 \wedge \R^4$;  in four dimensions,  the difference between
\emph{positive sectional curvature} and \emph{positive curvature operator} amounts to
the question of whether the curvature operator is positive-definite only on the light cone
with respect to this pseudo-metric,  or on the full space; c.f.\ \cite{Besse87}.
In one way this parallels the distinction between strict regularity and
positive cross-curvature of a cost;   in another it parallels the
distinction between positive cross-curvature 
and positive sectional curvature,
(\ref{full cross-curvature})--
(\ref{equivalent to sectional curvature non-negativity}).}

The regularity of optimal maps $F:M \longrightarrow \bar M$ is a question whose
answer should depend only on the cost function $c(x,\bar x)$
and the probability measures $\rho$ and $\bar \rho$.  It should not depend on
which choice of smooth coordinates on $M$ and $\bar M$ are used to represent
this data or the solution. Any necessary and sufficient condition
on $c(x,\bar x)$ guaranteeing regularity of $F$ should therefore be geometrically invariant,
meaning coordinate independent.
Thus pseudo-Riemannian geometry and curvatures arise in the theory
of optimal transportation for the same reason they arise in
Einstein's theory of gravity, {\em general relativity}:  they provide the natural language for describing
phenomena --- in this case regularity --- which exhibit invariance under the general group
of diffeomorphisms.
Put another way,  the underlying physical reality is independent of
whose coordinates are used to describe it.  Moreover, this invariance places
severe restrictions on the form which necessary and sufficient conditions
for regularity can take:
since the pseudo-metric $h$ is equivalent to knowing the cost function
$c(x,\bar x)$ --- up to null Lagrangians $v(x) + \bar v(\bar x)$ which are
irrelevant to the optimization at hand (\ref{Kantorovich}) ---  any such conditions
on the cost function must be expressible via the curvature tensor of $h$.

Let us turn now to the question of why the intrinsic geometry of optimal transportation
should be pseudo-Riemannian rather than Riemannian.  Dimensional symmetry between the
domain $M$ and target $\bar M$ suggest that the number of time-like directions in the
theory --- if any --- should equal the number of space-like directions.  But why
signature $(n,n)$ rather than the signature $(2n,0)$, which is more frequently associated to
elliptic and extremal problems? And why the nullity of $p \oplus \bar p$ in {\bf (A3w)}?

The Riemannian notion of length allows us to associate a magnitude to any sectional
curvature.  However,  null vectors have no length and cannot be
normalized; because the plane $(p \oplus \0) \wedge (\0 \oplus \bar p)$ is generated by
orthogonal null-vectors, we can decide the sign (positive, negative, or zero) of its sectional curvature,
but not the magnitude.  On the other hand, the results
of Loeper reveal that the size of the constant $C>0$ in hypothesis (\ref{TWA3w})
controls the H\"older constant of the mapping $F:M \longrightarrow \bar M$.
Unlike the exponent, which is coordinate-independent,  the H\"older constant of $F$
obviously depends on the choice of coordinates.  We are therefore relieved to find
the cross-curvature condition governing regularity does not have an associated magnitude,
since the problem has no intrinsic length scale.
To be scale free, the geometrical structure which governs regularity
for optimal transportation must be pseudo-Riemannian,
since the modulus of continuity of a map $F$ has no
intrinsic meaning in the absence of separate notions of length on $M$ and $\bar M$,
which the cost function $c(x,\bar x)$ alone cannot provide.  What it can and does provide
are geodesics on $N(\bar x) \times \{\bar x\}$ and $\{x\} \times N(x)$, and
geodesic convexity of these null submanifolds is the essential 
domain hypothesis in Ma, Trudinger \& Wang's theory.

In the present investigation,  we have focused exclusively on the pseudo-metric
$d\ell^2 = -c_{i\bar j} dx^i dx^{\bar j}$ induced by the cost (\ref{metric}).
Let us conclude by noting that there is also a canonical symplectic form
$\omega = d(D c \oplus \0) = -d(\0 \oplus \bar Dc)$ on $N \subset M \times \bar M$
associated to the cost $c \in C^4(N)$. In local coordinates $x^1,\ldots,x^n$ on $M$
and $x^{\bar 1},\ldots, x^{\bar n}$ on $\bar M$ it is given by
\begin{equation}\label{symplectic form}
\omega :=  \frac{1}{2}\left(
\begin{array}{cc}
0 & \bar D D c \\
-D \bar D c & 0
\end{array}
\right).
\end{equation}
It is possible to verify that any $c$-optimal diffeomorphism
$F:M \longrightarrow \bar M$ has a graph which is spacelike with respect to $h$
and Lagrangian with respect to $\omega$. Conversely,
for a weakly regular cost, results of Trudinger \& Wang \cite{TrudingerWang07p}
or Villani \cite{Villani06} can be used to deduce
that any diffeomorphism whose graph is $h$-spacelike and $\omega$-Lagrangian is in fact the
$c$-optimal map between the measures $\rho := \pi_\#(\vol^{(N,h)}|_{\graph(F)})$ and
$\bar \rho := \bar \pi_\# (\vol^{(N,h)}|_{\graph(F)})$ obtained by projecting the
Riemannian volume $\vol^{(N,h)}$ induced by $h$ on $\graph(F)$
through the canonical projections $\pi(x,\bar x) = x$ and $\bar \pi(x,\bar x) = \bar x$.
This reveals another unexpected connection between optimal transportation and symplectic
(or pseudo-K\"ahler) geometry.
When $\rho$ and $\bar \rho$ are given by the Euclidean volumes on two convex domains,
and $c(x,\bar x) = |x-\bar x|^2/2$,  this is related to the work of
Wolfson \cite{Wolfson97} and Warren \cite{Warren} on special Lagrangian submanifolds,
where a pseudo-Riemannian metric of signature $(n,n)$ also appears \cite{Warren}.
We defer the details of this development to a future work.

\appendix

\section{Regularity consequences for optimal transportation}
\label{S:overview}

This series of appendices is devoted to explaining how the results obtained above
combine with Loeper's ideas  to simplify the proof of
his H\"older continuity results for optimal mappings with respect to
cost functions which are strictly regular {\bf (A3s)} --- meaning 
they have positive cost-sectional curvature in the language of Loeper
\cite{Loeper07p}. 
To emphasize that this argument is completely self-contained,
we recall the necessary details of his proof in full,
exploiting where possible the conceptual framework developed above.
We often present variations on his arguments,
but do not claim originality for the conclusions.

The plan can be outlined as follows.  In the present section we recall
the relevant facts of life from the literature,  and
derive the key local-implies-global ingredient ---  a variant of which appears
in Trudinger \& Wang \cite{TrudingerWang08p}
--- as a simple corollary to our main theorem
above.  In Appendix \ref{S:Taylor} we recall Loeper's Taylor expansion, which uses strict
positivity of orthogonal cross-curvatures to quantify the altitude of the double
mountain over the sliding mountain near the point in the valley of indifference
where the sliding mountains are normalized to coincide.
Appendices \ref{S:sausage} and \ref{S:volume comparison} explain how Loeper used this result
to obtain H\"older continuity of optimal mappings between suitable measures on Euclidean
domains.  Appendix \ref{S:spherical continuity} employs the same argument in a simpler
setting to obtain continuity of optimal mappings between suitable measures on the round
sphere,  when optimality is measured either against (a) Riemannian distance squared
\cite{Loeper07p},
or (b) the reflector antenna cost function, as in \cite{Loeper07p}
and Caffarelli, Guti\'errez \& Huang \cite{CaffarelliGutierrezHuang}.
As a corollary,  we recover
directly a bound on how closely an optimal map may approach the cut locus, which
can be used in place of Delano\"e \& Loeper's estimate \cite{DelanoeLoeper06}
in the next step. 
Once the continuity of the optimal map has been established,
Caffarelli, Guti\'errez, Huang and Loeper's
H\"older regularity results concerning
optimal maps on the spheres (a) \cite{Loeper07p} and (b) \cite{CaffarelliGutierrezHuang}
\cite{Loeper07p}
can be recovered using bi-twisting {\bf (A1)} alone without further resort to the
local-implies-global theorem:  the problem is easily localized when continuity is known.
This approach is considerably more direct than Loeper's original argument,
which established the local-implies-global theorem using approximation by
smooth solutions to a family of auxiliary problems constructed by combining Delano\"e's
continuity method \cite{Delanoe04} with Delano\"e \& Loeper's control on proximity to the
cut locus \cite{DelanoeLoeper06},
and the a priori estimates of Ma, Trudinger \& Wang \cite{MaTrudingerWang05}.

\subsection{$c$-convex functions and their properties}

The definition of $c$-convex functions and some basic
properties are discussed.  We first introduce the notion of a supporting mountain
(or $c$-mountain),  also called $c$-support functions, $c$-planes or $c$-graphs.
The focus (or $c$-focus) of such a mountain is defined below,  and was called
the $c$-normal by Trudinger \& Wang, in analogy with the Gauss map and generalized normal
in the theory of convex bodies.

We shall find it convenient to continue to assume $M$ and $\bar M$ are open manifolds
whose closures $\cl M$ and $\cl \bar M$ are compact in some larger space,
while $N \subset M \times \bar M$
intersects neither $M \times \partial \bar M$ nor $(\partial M) \times \bar M$.

\begin{definition}{\bf (Supporting mountains)}
Fix $c:\cl(M \times \bar M) \longrightarrow \R \cup \{+\infty\}$.
A {\em mountain} refers to any function $f$ on $\cl M$
of the form
$$
f(\cdot) = -c(\cdot,\bar x) + \lambda
$$
with $(\lambda,\bar x) \in \R \times \cl \bar M$.  The mountain is said to
be {\em focused} at $\bar x$.  The mountain $f$ is said to
{\em support} $u:\cl M \longrightarrow \R \cup\{+\infty\}$ at $x \in \cl M$
if $u(x)=f(x)< +\infty$ and
$$
u(y) \ge f(y)
$$
for all $y \in \cl M$.
The mountain is said to support $u$ \emph{to first order} at
$x \in M$
if $u(x) = f(x)<+\infty$, and $M$ has a manifold structure near $x$ with
\begin{align}
\label{first-order support}
u(y) \ge f(y) + o(|y-x|) \qquad {\rm as}\ y \to x.
\end{align}
\end{definition}

\begin{remark}[Twisting identifies focus]\label{R:twist identifies focus}
If a mountain $f$ supports $u$ to first order at a point $x\in M$ where $u$ happens
to be differentiable,  and $f$ is known to be focused in $\bar N(x)$,  then
twisting {\bf (A1)} identifies the focus $\bar x = \cexp_x Du(x)$ uniquely,
since \eqref{first-order support} then implies $Du(x) = Df(x)= -Dc(x,\bar x)$.
\end{remark}



\begin{definition}{\bf ($c$-convex function focused in $\bar \Omega$)}\label{D:c-convex function}
Fix $c:\cl(M \times \bar M) \longrightarrow \R \cup \{+\infty\}$ and a
subset 
$\bar \Omega \subset \cl \bar M$. Then
$u:\cl M \longrightarrow \R \cup\{+ \infty\}$ belongs to
$\mathcal{S}^{-c}_{\bar \Omega}$ 
if $\dom u := \{x \in M\mid u(x)<+\infty\}$ is non-empty
and there exists $v:\bar \Omega \longrightarrow \R \cup\{+\infty\}$ such that
$u= v^c_{\bar \Omega}$, where
\begin{align}
v^c_{\bar \Omega}(x)
&:= \sup_{\bar x \in \bar \Omega} -c(x,\bar x)-v(\bar x).
\nonumber
\end{align}
\end{definition}

Functions in the class $\mathcal{S}^{-c}_{\bar \Omega}$ are sometimes called \emph{$c$-convex},
though they might also be called \emph{$(-c)$-convex}, since they are
\emph{supremal convolutions} with $-c$.  Our subscript keeps track of the set
$\bar \Omega$ of foci;  we say $u \in \mathcal{S}^{-c}_{\bar \Omega}$
is a $c$-convex function focused in $\bar \Omega$.

\begin{lemma}{\bf (Properties $c$-convex functions inherit from the cost)}
\label{L:inherit}
Fix metric spaces $(M,d)$ and $(\bar M,\bar d)$, a cost
$c:\cl(M \times \bar M) \longrightarrow \R \cup \{+\infty\}$
and subset $\bar \Omega \subset \cl \bar M$.  Recall $c^*(\bar x, x) := c(x,\bar x)$.
\\{\em (a) Legendre-type duality:}
If $u \in \mathcal{S}^{-c}_{\bar \Omega}$ then $u = (u^{c^*}_{\cl M})^c_{\bar \Omega}$.
\\ {\em (b) Semicontinuity:} If $c:\cl M \times \bar \Omega \longrightarrow \R$ is continuous and
$u \in \mathcal{S}^{-c}_{\bar \Omega}$, then $u: \cl M \longrightarrow \R \cup \{+\infty\}$
is lower semicontinuous and $\partial^c u \subset \cl(M \times \bar M)$  is closed.
\\{\em (c) Lipschitz:}
If $\Omega \subset \cl M$ and 
$u \in \mathcal{S}^{-c}_{\bar \Omega}$ then 
\begin{align*}
\Lip(u,\Omega)
:= \sup_{\Omega \ni y \ne z \in \Omega} \frac{u(y) - u(z)}{d(y,z)} 
&\le \sup_{\bar x \in \bar \Omega} \Lip(-c(\cdot,\bar x),\Omega).
\end{align*}
{\em (d) Semiconvex:}
If $(M,d) = (\R^n,|\cdot|)$ and
$D^2 c(\cdot,\bar x)\le C I$ holds
in the (matrix) sense of distributions on a domain $\Omega \subset \Rn$,
with $C=C(\Omega, \bar \Omega) \in \R$ independent of
$\bar x \in\bar \Omega$,
then each $u \in \mathcal{S}^{-c}_{\bar \Omega}$
satisfies $D^2 u \ge -CI$ in the 
sense of distributions on $\Omega$.
\end{lemma}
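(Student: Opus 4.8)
The plan is to derive all four properties from the single structural fact that a function $u \in \mathcal{S}^{-c}_{\bar\Omega}$ is, by definition, the pointwise supremum $u(x)=\sup_{\bar x\in\bar\Omega}\bigl(-c(x,\bar x)-v(\bar x)\bigr)$ of the mountains indexed by $\bar x\in\bar\Omega$; each asserted property of $u$ is then inherited from the corresponding property enjoyed by every individual mountain. For (a) I would run the classical Fenchel--Moreau biconjugation argument, adapted to the two prescribed index sets. Writing $\tilde u := u^{c^*}_{\cl M}$, one inequality is automatic: for every $x$ and every $\bar x\in\bar\Omega$ the definition of $\tilde u$ gives $\tilde u(\bar x)\ge -c(x,\bar x)-u(x)$, hence $u(x)\ge -c(x,\bar x)-\tilde u(\bar x)$, and taking the supremum over $\bar x\in\bar\Omega$ yields $u\ge \tilde u^c_{\bar\Omega}$. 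For the opposite inequality I would use the hypothesis $u=v^c_{\bar\Omega}$, which forces $v(\bar x)\ge -c(x,\bar x)-u(x)$ for all $x\in\cl M$, hence $v\ge\tilde u$ on $\bar\Omega$, so that $\tilde u^c_{\bar\Omega}\le v^c_{\bar\Omega}=u$; combining the two gives the claim. The only care needed is to keep the suprema ranging over exactly $\cl M$ and $\bar\Omega$, so that each step matches the definitions of $u^{c^*}_{\cl M}$ and $v^c_{\bar\Omega}$.

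Parts (b) and (c) I would obtain from the elementary stability of lower semicontinuity, of continuity, and of Lipschitz seminorms under suprema. Since $c$ is real-valued and continuous on $\cl M\times\bar\Omega$, each mountain $x\mapsto -c(x,\bar x)-v(\bar x)$ is continuous (with the value $-\infty$ allowed when $v(\bar x)=+\infty$), so $u$, as their supremum, is lower semicontinuous on $\cl M$; closedness of $\partial^c u$ then follows by taking a limit along a sequence $(x_k,\bar x_k)\in\partial^c u$, using continuity of $c$ to pass to the limit on the $y$-side of the defining inequality $u(y)+c(y,\bar x_k)\ge u(x_k)+c(x_k,\bar x_k)$, and lower semicontinuity of $u$ together with $c(x_k,\bar x_k)\to c(x_\infty,\bar x_\infty)$ on the other side. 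For (c) the key estimate is $|u(y)-u(z)|\le \sup_{\bar x\in\bar\Omega}|c(y,\bar x)-c(z,\bar x)|$, which is just the one-line inequality $|\sup_\alpha f_\alpha-\sup_\alpha g_\alpha|\le\sup_\alpha|f_\alpha-g_\alpha|$ applied to $f_{\bar x}=-c(\cdot,\bar x)-v(\bar x)$; dividing by $d(y,z)$ and taking the supremum over distinct $y,z\in\Omega$ gives the bound, and shows along the way that $u$ is finite throughout $\Omega$ as soon as it is finite at one point of $\Omega$, so that the displayed seminorm is the usual one.

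For (d), the plan is to absorb the Hessian bound into the mountains and use that suprema of convex functions are convex. On the domain $\Omega$ where $D^2 c(\cdot,\bar x)\le CI$ distributionally for all $\bar x\in\bar\Omega$, the function $x\mapsto -c(x,\bar x)+\tfrac{C}{2}|x|^2$ is continuous with nonnegative distributional Hessian, hence convex along every segment contained in $\Omega$ (using the one-variable fact that a continuous function with nonnegative distributional second derivative is convex). Therefore $x\mapsto u(x)+\tfrac{C}{2}|x|^2=\sup_{\bar x\in\bar\Omega}\bigl(-c(x,\bar x)+\tfrac{C}{2}|x|^2-v(\bar x)\bigr)$ is a supremum of functions convex on each such segment, hence itself convex along every segment of $\Omega$, so has nonnegative distributional Hessian; equivalently $D^2 u\ge -CI$ on $\Omega$. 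I do not expect a genuine obstacle in any of the four parts, all of which are classical; the points that will require the most attention are the bookkeeping over the index sets in (a), the uniform handling of the value $+\infty$ for $u$ and $v$ throughout, and, in (d), the careful passage between distributional Hessian bounds and convexity along segments when $\Omega$ need not itself be convex.
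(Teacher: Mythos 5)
Your proposal is essentially correct, but note that the paper does not actually prove this lemma: its ``proof'' consists of the remark that (b) is elementary and that (a), (c), (d) appear in Villani's book (Proposition 5.8 and the proof of Theorem 10.26). So your self-contained argument is welcome, and it is in fact the standard one from those sources: biconjugation via the two one-line inequalities for (a), stability of lower semicontinuity and of Lipschitz seminorms under suprema for (b) and (c), and absorbing the Hessian bound into the mountains and using that a supremum of locally convex functions is locally convex for (d). Your attention to the index sets, to the value $+\infty$, and to convexity along segments rather than global convexity of $\Omega$ in (d) are exactly the right points of care.

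One slip to fix in (a): having deduced $v\ge\tilde u$ on $\bar\Omega$, the $c$-transform is order-\emph{reversing}, so the correct conclusion is $u=v^c_{\bar\Omega}\le\tilde u^c_{\bar\Omega}$ --- which is precisely the inequality opposite to the one you obtained first, and which closes the argument. As written, ``$\tilde u^c_{\bar\Omega}\le v^c_{\bar\Omega}=u$'' has the inequality backwards and merely restates the first half, so the displayed chain does not yet give the claimed equality; reversing that one inequality sign repairs it. In (b), be aware that passing to the limit on the left-hand side of $u(y)+c(y,\bar x_k)\ge u(x_k)+c(x_k,\bar x_k)$ uses continuity (not mere lower semicontinuity) of $c(y,\cdot)$ at $\bar x_\infty$, which is why the hypothesis of continuity on $\cl M\times\bar\Omega$ is invoked; this matches the intended elementary argument.
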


\begin{proof}
All four facts are well-known;  (b) is elementary to check and the other
three statements are proved, e.g., in \cite{Villani06}, where (a) is Proposition 5.8,
and (c) and (d) are established in the proof of Theorem 10.26.
\end{proof}

\begin{corollary}{\bf (Lipschitz/semiconvex)}\label{C:Lipschitz/semiconvex}
If $c$ is a locally Lipschitz function on the (compact) closure of
$\cl(M \times \bar M)$,
then $u \in \mathcal{S}^{-c}_{\cl \bar M}$ is locally Lipschitz on $\cl M$
and $\cl M \subset \dom \partial^c u$.
If, in addition, 
$c$ is locally semiconcave,
then $u \in \mathcal{S}^{-c}_{\cl \bar M}$ is locally semiconvex
(\ref{D:semiconvex}) on $M$. 
\end{corollary}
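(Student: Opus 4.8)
The plan is to read this off from Lemma~\ref{L:inherit}, the one new ingredient being that compactness of $\cl(M \times \bar M)$ promotes the \emph{local} hypotheses on $c$ to the \emph{uniform}-in-$\bar x$ hypotheses that parts (c)--(d) of that lemma actually require.

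First I would note that $u$ is real-valued and bounded on $\cl M$. Writing $u = v^c_{\cl \bar M}$ as in Definition~\ref{D:c-convex function}, the requirement that $u$ take values in $\R \cup \{+\infty\}$ rather than $-\infty$ forces $\dom v \neq \emptyset$; since $c$ is bounded on the compact set $\cl(M\times\bar M)$, evaluating the defining supremum at a point of $\dom v$ bounds $u$ below, while the inequality $v \ge -c(x_1,\cdot) - u(x_1)$ obtained from any $x_1 \in \dom u$ bounds $v$ below and hence $u$ above. With $u$ finite in hand, Lemma~\ref{L:inherit}(c) gives $\Lip(u,\cl M) \le \sup_{\bar x \in \cl \bar M} \Lip(-c(\cdot,\bar x),\cl M)$, whose right side is bounded by the (global, since $\cl(M\times\bar M)$ is compact) Lipschitz constant of $c$; thus $u$ is Lipschitz, in particular locally Lipschitz, on $\cl M$.

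Next I would show $\cl M \subset \dom \partial^c u$ by verifying that the supremal convolution defining $u$ is attained at each $x$. Using the duality of Lemma~\ref{L:inherit}(a) I would take $v := u^{c^*}_{\cl M} \in \mathcal{S}^{-c^*}_{\cl M}$, which is lower semicontinuous by Lemma~\ref{L:inherit}(b) (applied with $M,\bar M$ interchanged and cost $c^*$, continuous since $c$ is) and bounded by the previous step. For fixed $x\in\cl M$ the map $\bar x \mapsto -c(x,\bar x)-v(\bar x)$ is then upper semicontinuous on the compact set $\cl \bar M$, hence attains its maximum value $u(x)$ at some $\bar x_x$; substituting $v(\bar x_x) = -c(x,\bar x_x)-u(x)$ into $u(y) \ge -c(y,\bar x_x)-v(\bar x_x)$ gives
\[
u(y) + c(y,\bar x_x) \ \ge\ u(x) + c(x,\bar x_x) \qquad \text{for all } y \in \cl M,
\]
i.e.\ $(x,\bar x_x) \in \partial^c u$, so $x \in \dom \partial^c u$.

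Finally, if $c$ is also locally semiconcave, I would cover $M$ by coordinate charts $\Omega$ with $\cl\Omega$ compact in $M$; working in such a chart, local semiconcavity of $c$ on the compact set $\cl\Omega \times \cl \bar M$ supplies a single constant $C=C(\Omega)$ with $D^2_{xx} c(\cdot,\bar x) \le CI$ on $\Omega$ in the distributional sense, uniformly in $\bar x\in\cl\bar M$, so Lemma~\ref{L:inherit}(d) yields $D^2 u \ge -CI$ on $\Omega$; as the $\Omega$ cover $M$, $u$ is locally semiconvex (\ref{D:semiconvex}) on $M$. The only points needing any real care are the two appeals to compactness --- upgrading ``locally Lipschitz / semiconcave'' to genuinely uniform estimates --- together with the routine check that joint semiconcavity of $c(x,\bar x)$ passes to the slices $c(\cdot,\bar x)$ with a constant independent of $\bar x$; I expect no serious obstacle beyond this.
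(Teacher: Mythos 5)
Your proposal is correct and follows essentially the same route as the paper: both deduce the Lipschitz and semiconvexity claims from Lemma~\ref{L:inherit}(c)--(d) by using compactness to make the constants uniform in $\bar x \in \cl\bar M$, and both prove $\cl M \subset \dom\partial^c u$ by invoking the duality of Lemma~\ref{L:inherit}(a), the lower semicontinuity of $u^{c^*}_{\cl M}$ from Lemma~\ref{L:inherit}(b), and attainment of the supremum over the compact set $\cl\bar M$. Your extra preliminary check that $u$ is finite and bounded is a harmless (and reasonable) addition the paper leaves implicit.
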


\begin{proof}
The fact that $u \in \mathcal{S}^{-c}_{\cl \bar M}$ inherits Lipschitz or semiconvexity
properties from the cost follows directly from Lemma \ref{L:inherit}(c) and (d), using
compactness of $\cl \bar M$ to conclude that the local semiconcavity constant $C$ of
$c(\cdot, \bar x)$ and Lipschitz constant $\Lip(-c(\cdot,\bar x),\Omega)$ are
independent of $\bar x \in \cl \bar M$.  To see $\cl M \subset \dom \partial^c u$,  use
Lemma \ref{L:inherit}(a) to write
\begin{equation*}
u(x) = \sup_{\bar x \in \cl \bar M} -c(x,\bar x) - u^{c^*}_{\cl M}(\bar x).
\end{equation*}
Here $u^{c^*}_{\cl M}$ is lower semicontinuous by Lemma \ref{L:inherit}(b)
so the supremum is attained at some $\bar x \in \cl \bar M$, whence
\begin{align*}
u(x) + c(x,\bar x) &= - u^{c^*}_{\cl M} (\bar x)
\\ &=- \sup_{y\in \cl M} - c(y,\bar x) - u(y)
\\ & \le u(y) + c(y,\bar x)
\end{align*}
for all $y \in \cl M$.
Thus $(x,\bar x) \in \partial^c u$ as desired (\ref{c-subdifferential}).
\end{proof}

Note that $c$ is both locally Lipschitz and semiconcave if it is weakly regular
on $\cl(N)$, or else if $c=d^2/2$ is the distance squared on a compact Riemannian manifold
$(M=\bar M, g)$ without boundary as in Example \ref{E:Riemannian};  see Cordero-Erausquin,
McCann, Schmuckenschl\"ager \cite{CorderoMcCannSchmuckenschlager01}.
This is not the case for the reflector antenna cost function of
Example \ref{E:reflector antenna}, but the associated $c$-convex
functions are Lipschitz and semiconvex anyways --- unless they are
mountains --- as the following proposition shows.  Here,  as above,
semiconvexity of $u:M \longrightarrow \R$
means that near each point $z \in M$ there is a coordinate ball $B_r(z) \subset M$
and coordinate-dependent constant $C<\infty$ such that $D^2 u(x) \ge -C\I$
as matrices distributionally in these coordinates,  or equivalently that
\begin{equation}\label{D:semiconvex}
x \in B_r(z) \longrightarrow u(x) + C|x|^2/2
\end{equation}
is a convex function,  where $|x|$ denotes the Euclidean norm of the coordinates.
Semiconcavity of $u$ means $-u$ is semiconvex.

\begin{proposition}{\bf (Semiconvexity of reflector antenna potentials)}
\label{P:semi-convex reflector antenna}
Fix the cost $c(x,\bar x) = -\log|x -\bar x|$ on the Euclidean unit sphere
$\Sphere^n = \partial B_1(\0) \subset \R^{n+1}$. If $u:\Sphere^n \longrightarrow \R \cup \{+\infty\}$
belongs to $\mathcal{S}^{-c}_{\Sphere^n}$ and
takes finite values at two or more points, then $u$
is locally Lipschitz and semiconvex on
$\Sphere^n$.  In coordinates, its Lipschitz and semiconvexity constants do not depend on $u$
except through $\max_{\Sphere^n} u - \min_{\Sphere^n} u$.
\end{proposition}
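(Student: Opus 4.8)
The plan is to \emph{localize}: around each $x_0\in\Sphere^n$ I will show that the supremum defining $u$ is realized only by mountains focused at points a definite distance $\rho>0$ from $x_0$, with $\rho$ depending on $u$ only through its oscillation $M-\mu:=\max_{\Sphere^n}u-\min_{\Sphere^n}u$. Away from those foci the cost $-\log|x-\bar x|$ is smooth with bounds controlled by $\rho$, so $u$ is a supremum of uniformly $C^{1,1}$ functions near $x_0$ and inherits a Lipschitz bound of order $\rho^{-1}$ and a semiconvexity bound of order $\rho^{-2}$. One caveat must be dealt with first: a single mountain $-c(\cdot,\bar x_0)+\lambda=\log|\cdot-\bar x_0|+\lambda$ also belongs to $\mathcal{S}^{-c}_{\Sphere^n}$ and is neither bounded below nor semiconvex, so (as the preceding remark signals) the hypothesis is to be read as excluding it. Using the Legendre-type duality $u=(u^{c^*}_{\cl M})^c_{\Sphere^n}$ of Lemma \ref{L:inherit}(a), ``$u$ is not a single mountain'' is the same as ``$v:=u^{c^*}_{\cl M}$ is finite at two or more points of $\Sphere^n$'', since otherwise $(u^{c^*})^c$ collapses to one mountain; I will take this as the working hypothesis.

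Next I would record the elementary a priori bounds. Writing $u=v^c_{\Sphere^n}$ and using that $u$ is finite at two points $x_1\neq x_2$, the fact that $\max(|x_1-\bar x|,|x_2-\bar x|)\ge|x_1-x_2|/2$ for every $\bar x\in\Sphere^n$ gives $v(\bar x)\ge\log(|x_1-x_2|/2)-\max(u(x_1),u(x_2))>-\infty$; substituting back and using $|x-\bar x|\le 2$ yields $M:=\max_{\Sphere^n}u<\infty$. Since $v(\bar x)=\sup_{y\in\Sphere^n}\big[\log|y-\bar x|-u(y)\big]\ge\log|{-\bar x}-\bar x|-u(-\bar x)\ge\log 2-M$, the dual potential obeys the uniform lower bound $v\ge\log 2-M$ on all of $\Sphere^n$. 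Finally, choosing $\bar a\neq\bar b$ with $v(\bar a),v(\bar b)<\infty$ and comparing $u$ with the mountains they focus, $\max(|x-\bar a|,|x-\bar b|)\ge|\bar a-\bar b|/2$ forces $\mu:=\min_{\Sphere^n}u>-\infty$; the complementary bound $v\le\log 2-\mu$ (immediate from $u\ge\mu$) shows $v\colon\Sphere^n\to\R$ is bounded, so in the representation $u(x)=\sup_{\bar x\in\Sphere^n}g_{\bar x}(x)$, with $g_{\bar x}(x):=-c(x,\bar x)-v(\bar x)=\log|x-\bar x|-v(\bar x)$, each summand is real off its focus.

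Now the localization: put $\rho:=\tfrac23 e^{-(M-\mu)}\in(0,2)$. If $|x-x_0|<\rho/2$ and $|\bar x-x_0|<\rho$, then $|x-\bar x|<3\rho/2$, so $g_{\bar x}(x)\le\log(3\rho/2)-(\log 2-M)=\mu-\log 2<\mu\le u(x)$; hence mountains focused near $x_0$ are inactive and $u(x)=\sup\{g_{\bar x}(x): |\bar x-x_0|\ge\rho\}$ on $B_{\rho/2}(x_0)$ (the index set is nonempty --- it contains a neighbourhood of $-x_0$). For $|\bar x-x_0|\ge\rho$ and $x\in B_{\rho/2}(x_0)$ one has $|x-\bar x|\ge\rho/2$, so the ambient function $y\mapsto\log|y-\bar x|$ has gradient of norm $\le 2/\rho$ and Hessian of norm $\le 12/\rho^2$ there; restricting to $\Sphere^n$ adds only the bounded second fundamental form of the sphere times the gradient, so each active $g_{\bar x}$ is $C^{1,1}$ on $B_{\rho/2}(x_0)$ with Lipschitz constant $\lesssim\rho^{-1}$ and with $g_{\bar x}(\cdot)+C_n\rho^{-2}|\cdot|^2/2$ convex for a dimensional constant $C_n$. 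A supremum of such functions is Lipschitz with constant $\lesssim\rho^{-1}$ and satisfies the same convexity, i.e.\ $u+C_n\rho^{-2}|\cdot|^2/2$ is convex on $B_{\rho/2}(x_0)$; this is local Lipschitzness and semiconvexity (\ref{D:semiconvex}) with constants $\lesssim e^{M-\mu}$ and $\lesssim e^{2(M-\mu)}$. Passing to a fixed finite coordinate atlas on $\Sphere^n$ (whose transition data are independent of $u$) gives the coordinate statement with constants depending on $u$ only through $\max_{\Sphere^n}u-\min_{\Sphere^n}u$, as claimed.

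The step I expect to be most delicate is pinning down the ``non-mountain'' content of the hypothesis and, with it, the uniform lower bound $v\ge\log 2-M$ on the dual potential: this is precisely what keeps the logarithmic singularity of the cost from being exposed at any point, and everything after it is soft. (The argument in fact uses only that $\Sphere^n$ is a compact submanifold of Euclidean space carrying a fixed atlas, so it applies verbatim to the cost $-\log|x-\bar x|$ on any such submanifold.)
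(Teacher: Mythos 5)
Your argument is correct, and its first half (the a priori bounds: finiteness of $u$ at two points plus $\max(|x_1-\bar x|,|x_2-\bar x|)\ge|x_1-x_2|/2$ pins down the dual potential $v=u^{c^*}$ from below, whence $u$ is bounded above, and symmetrically $u$ is bounded below and $v$ above) is the same as the paper's. Where you genuinely diverge is in how the logarithmic singularity is neutralized. The paper exploits the antipodal symmetry of the sphere: for any $\bar x\in\Sphere^n$ the two-mountain envelope $\max\{\log|\cdot-\bar x|-\lambda_0,\ \log|\cdot+\bar x|-\lambda_1\}$ is semiconvex on \emph{all} of $\Sphere^n$ with a constant depending only on $|\lambda_0-\lambda_1|$, because near either singularity the other, smooth, mountain dominates; $u$ is then exhibited as a supremum of such pairs with height gap controlled by $\max u-\min u$. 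You instead localize at $x_0$ and show that mountains focused within $\rho\sim e^{-(\max u-\min u)}$ of $x_0$ take values below $\min u$ on $B_{\rho/2}(x_0)$, hence are inactive in the supremum there, leaving a supremum of uniformly $C^{1,1}$ competitors. Both routes give constants depending only on the oscillation of $u$; the paper's pairing is shorter and avoids any atlas until the end, while yours does not use the antipodal map and therefore, as you observe, transfers verbatim to any compact embedded submanifold. One small tightening: rather than declaring that the hypothesis ``is to be read as excluding'' single mountains and ``taking as a working hypothesis'' that $v$ is finite at two or more points, note that this follows in one line --- if $v$ were finite at at most one point $\bar y$, then $u$ would reduce to a single mountain with $u(\bar y)=-\infty$, contradicting the codomain $\R\cup\{+\infty\}$ --- which is precisely how the paper dispatches this case.
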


\begin{proof}
Since $u \in \mathcal{S}^{-c}_{\Sphere^n}$, defining the dual potential
$\bar u(\cdot)= \sup_{x \in \Sphere^n} \log | \cdot - x| -u (x)$ yields
\begin{align}\label{Legendre duality}
u(\cdot) = \sup_{\bar x \in \Sphere^n} \log|\cdot -\bar x| - \bar u(\bar x)
\end{align}
similarly to Lemma \ref{L:inherit}(a).
We shall show that both $u$ and $\bar u$ are bounded from below and above.
Since $u(\cdot)$ takes finite values at $x_0 \ne x_1$ in $\Sphere^n$, we have
$\bar u(\cdot) \ge \max_{i=0,1} \log|\cdot -x_i| - u(x_i) 
\ge const > -\infty$,  where the constant depends only on $\max\{u(x_0),u(x_1)\}$ and
$|x_0-x_1|$.
Thus $\bar u(\cdot)$ is bounded below,
hence $u(\cdot)$ is bounded above from \eqref{Legendre duality}
since $|x-\bar x| \le 2$ for all $x,\bar x \in \Sphere^n$.

On the other hand,  if $\dom \bar u := \{\bar x \in \Sphere^n \mid \bar u(\bar x)<\infty\}$
consists of one or fewer points $\bar y$,  we reach the contradiction
$u(\bar y) = -\infty$.  So $\dom \bar u$ consists of two or more points, and
the same argument as before yields
$u(\cdot)$ bounded below and $\bar u(\cdot)$ bounded above.

To show $u$ is semiconvex,  recall that any supremum of smooth
functions which have locally uniform control on their
$C^2$ norms is semiconvex,  as in Lemma \ref{L:inherit}(d).
Let $-\bar x$ denote the antipodal point to a fixed point $\bar x \in \Sphere^n$.
If $|\lambda_0-\lambda_1|\le \Lambda$,  the function
$$
v(\cdot) := \max\{\log |\cdot - \bar x| - \lambda_0, \log|\cdot + \bar x| - \lambda_1\}
$$
is semiconvex on $\Sphere^n$,  with a semiconvexity constant $C$ depending only on
$\Lambda<+\infty$, but independent of $\lambda_0,\lambda_1 \in \R$ and $\bar x \in \Sphere^n$.
Setting $\Lambda = (\sup_{\Sphere^n} u) - (\inf_{\Sphere^n} u)$,  we see from \eqref{Legendre duality}
that
\begin{align*}
u (\cdot)
&= \sup_{\bar x \in \Sphere^n} \max\{\log|\cdot - \bar x| - \bar u (\bar x), \log |\cdot + \bar x| - \bar u(-\bar x)\}
\end{align*}
is a supremum of such functions $v(\cdot)$,  hence semiconvex with constant $C$ as desired.
Finally,  any bounded convex function is locally Lipschitz,  hence bounded semiconvex
functions are locally Lipschitz by (\ref{D:semiconvex}).
%
\end{proof}

\begin{definition}{\bf (Super- and subdifferential)}
The {\em subdifferential}
$\partial u \subset T^*M$ of a function $u:M \longrightarrow \R \cup \{+\infty\}$
on a smooth manifold is defined to
consist of those points $(x,q^*) \in T^*M$ in the cotangent bundle such that
in local coordinates around $x$,
\begin{equation}\label{subdifferential}
u(y) \ge u(x) + q^*(y-x) + o(|y-x|) \quad {\rm as}\ y \to x.
\end{equation}
\end{definition}
It follows that $\partial u(x) := \{q^* \in T_x^*M \mid (x,q^*) \in \partial u\}$
is a convex set,  and $\partial u(x) = \{Du(x)\}$ at those points
$x \in \dom Du \subset M \setminus \partial M$
where $u$ is differentiable.  The superdifferential $-\partial (-u)$ consists of
those pairs $(x,q^*) \in T^*M$ which satisfy the opposite inequality.
We set $\dom \partial u := \{x \in M \mid \partial u(x) \ne \emptyset\}$, so that
$\dom Du = (\dom \partial u) \cap (\dom \partial (-u))$.
If a function $u$ on $M$ is locally Lipschitz and semiconvex,  then
$\partial u \subset T^* M$ is closed and $M \subset \dom \partial u$.

%
%

Let us now hypothesize an extension of the twist condition to mountains focused
in $\cl(\bar M)$ as follows.

\begin{definition}{\bf (Extended twist)} A cost
$c:\cl(M \times \bar M) \longrightarrow \R \cup \{+\infty\}$ satisfies the extended
twist condition {\bf (A1$'$)} if each mountain $f$ 
is subdifferentiable on $M \cap \dom f$,
where $\dom f := \{y \in \cl M \mid f(y) <\infty\}$, and if
there is a continuous map $\cexp':T^*M \cap \cl (\dom \cexp) \longrightarrow \cl \bar M$,
such that each $x \in M$, mountain $f(\cdot) = - c(\cdot,\bar x) + \lambda$, and extreme point
$p^*$ of $\partial f(x)$ satisfy $(x,p^*) \in \cl (\dom \cexp)$ and $\bar x = \cexp'_x p^*$.
\end{definition}

One way to satisfy {\bf (A1$'$)} is to assume {\bf (A1)} extends to
some larger manifold $\tilde N$ in which $M \times \bar M \subset \subset \tilde N$
is compactly embedded,  which we abbreviate by saying {\bf (A1)} holds on $\cl N$.
Property {\bf (A1$'$)} is also verified for the examples discussed hereafter: any 
compact Riemannian manifold $(M=\bar M, g)$ with the geodesic distance squared cost
$c=d^2/2$ of Example \ref{E:Riemannian};
and the reflector antenna cost
$c(x,\bar x) = -\log |x - \bar x|$ on the sphere $\Sphere^n \times \Sphere^n$
as in Example \ref{E:reflector antenna}.
The key to verification in the Riemannian case is that the Riemannian exponential
is continuously defined throughout the tangent bundle, the cut-locus has no interior,
and mountains are semiconvex \cite{CorderoMcCannSchmuckenschlager01},  so
extreme points in $\partial f(x)$ of a mountain $f$
are limits of gradients $p^* = \lim_{k \to \infty} Df(x_k)$
from nearby points of differentiability $x_k \in \dom Df$; c.f.\ \S 25.6 \cite{Rockafellar72}.

The extended twist condition {\bf (A1$'$)} yields the following lemma and theorem.


\begin{lemma}\label{L:supporting c-convex}
{\bf (Tangent mountains support $c$-convex functions globally)}
Suppose a cost $c:\cl(M \times \bar M) \longrightarrow \R \cup \{+\infty\}$
twisted on $N \subset M \times \bar M$
satisfies the extended twist hypothesis {\bf (A1$'$)}.
If $u \in \mathcal{S}^{-c}_{\cl \bar M}$ is differentiable at
$x \in M \cap \dom \partial^c u$ then $(x,Du(x)) \in \cl \dom \cexp$ and
$\partial^c u(x) = \{\bar x\}$ with $\bar x := \cexp_x' Du(x)$.
Any mountain supporting $u$ to first order at $x$ is
focused at $\bar x$ and supports $u$ globally.
\end{lemma}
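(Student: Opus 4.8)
The plan is to extract from the hypothesis $x\in\dom\partial^c u$ a mountain that supports $u$ \emph{globally} at $x$, to use differentiability of $u$ to collapse the subdifferential of that mountain at $x$ to the single covector $Du(x)$, and then to read the focus off from the extended twist {\bf (A1$'$)}. Concretely, I would first fix some $\bar x_0\in\partial^c u(x)$ (nonempty by hypothesis, with $c(x,\bar x_0)<\infty$). Using the Legendre-type duality $u=(u^{c^*}_{\cl M})^c_{\cl \bar M}$ of Lemma~\ref{L:inherit}(a), the mountain $f_0(\cdot):=-c(\cdot,\bar x_0)-u^{c^*}_{\cl M}(\bar x_0)$ lies pointwise below $u$ on $\cl M$, and the defining inequality of $\partial^c u$ forces $u^{c^*}_{\cl M}(\bar x_0)=-c(x,\bar x_0)-u(x)$, i.e.\ $f_0(x)=u(x)$; thus $f_0$ supports $u$ globally at $x$ and $x\in M\cap\dom f_0$.

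Next I would invoke {\bf (A1$'$)}, which makes $\partial f_0(x)$ nonempty, and show it is a singleton: for any $p^*\in\partial f_0(x)$ the chain $u(y)\ge f_0(y)\ge f_0(x)+p^*(y-x)+o(|y-x|)$, combined with $f_0(x)=u(x)$ and the first-order expansion of $u$ at $x$, yields $(Du(x)-p^*)(y-x)\ge o(|y-x|)$; sending $y\to x$ along each ray forces $p^*=Du(x)$. Hence $\partial f_0(x)=\{Du(x)\}$, whose unique element is trivially extreme, and {\bf (A1$'$)} applied to $f_0$ and this extreme point gives $(x,Du(x))\in\cl(\dom\cexp)$ together with $\bar x_0=\cexp'_x Du(x)=:\bar x$. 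Since $\bar x_0\in\partial^c u(x)$ was arbitrary, $\partial^c u(x)=\{\bar x\}$, which settles the first two claims.

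For the final assertion I would take an arbitrary mountain $g(\cdot)=-c(\cdot,\bar y)+\mu$ supporting $u$ to first order at $x$; then $g(x)=u(x)<\infty$, so $x\in M\cap\dom g$ and $c(x,\bar y)<\infty$. Rerunning the singleton argument of the previous paragraph for $g$ (now invoking only the first-order support inequality) gives $\partial g(x)=\{Du(x)\}$, whence {\bf (A1$'$)} yields $\bar y=\cexp'_x Du(x)=\bar x$: $g$ is focused at $\bar x$. As a mountain is determined by its focus and its value at any single point where $c$ is finite, $g$ coincides with $f_0$ on $\cl M$; and $f_0$ supports $u$ globally by the first step, so $g$ does too.

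The one point that requires care is the singleton reduction $\partial f_0(x)=\{Du(x)\}$. It is essential that {\bf (A1$'$)} is framed in terms of the subdifferential $\partial f_0$ (which it declares nonempty and whose extreme points it controls) and not the superdifferential, and that one compares the first-order expansion of $u$ against a genuine \emph{subgradient} estimate for $f_0$ rather than the bare inequality $u\ge f_0$; this is exactly what lets differentiability of $u$ pin the subdifferential down to a point. Everything else in the argument is soft — no curvature, non-degeneracy, or smoothness of $c$ on $N$ is used, only {\bf (A1$'$)} and the duality of Lemma~\ref{L:inherit}(a).
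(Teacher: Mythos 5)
Your proof is correct and takes essentially the same route as the paper's: both extract a globally supporting mountain from an element of $\partial^c u(x)$, use differentiability of $u$ together with the subdifferentiability guaranteed by {\bf (A1$'$)} to pin the (sub)gradient of any first-order supporting mountain down to $Du(x)$, and then apply {\bf (A1$'$)} to identify the focus as $\cexp_x' Du(x)$ and conclude all such mountains coincide. The detour through Lemma~\ref{L:inherit}(a) in your first step is harmless but unnecessary, since the defining inequality of $\partial^c u$ already exhibits the globally supporting mountain directly.
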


\begin{proof}
Fix a point $x \in M \cap \dom \partial^c u$
where $u \in \mathcal{S}^{-c}_{\cl \bar M}$ is differentiable.
Then $u(x) < +\infty$ and
there exists $\bar x \in \partial^c u(x)$ such that the mountain
$f(\cdot) = -c(\cdot,\bar x) + \lambda$ with $\lambda = c(x,\bar x) +u(x)$
supports $u$ at $x$.  Now let $g(\cdot) = -c(\cdot, \bar y) + \mu$ be
any other mountain which supports $u$ to first order at $x$.  Then
$g(y) \le u(y) +o(|y-x|) = u(x) + Du(x)(y-x) + o(|y-x|)$ is
superdifferentiable at $x$;  since $g$ is subdifferentiable by hypothesis,
we conclude $Dg(x)=Du(x)$.  In this case $\partial g(x) = \{Du(x)\}$,
has $Du(x)$ as an extreme point, so the extended twist hypothesis
implies $g$ is focused at $\bar y = \cexp_x' Du(x)$.
Applying the same argument to $f$ instead of $g$ yields $\bar x = \cexp_x'Du(x)$,
hence $\bar x = \bar y$ and $g=f + const$.  The constant vanishes since $g(x)=f(x)$,
so we conclude any mountain $g$ which supports
$u$ to first order at $x \in \dom Du$ actually supports $u$ globally.
\end{proof}

We are now in a position to extend this lemma to points where $u$ is not differentiable
by applying Theorem~\ref{T:second stab}.  This extension is the key
local-implies-global ingredient required for Loeper's argument.
Our proof of Theorem~\ref{T:supporting c-convex} is similar to the proof
of Proposition~2.12 \cite{Loeper07p}.
Under the stronger hypothesis {\bf (A3s)}, Trudinger \& Wang's corrigendum
contains a different approach to a similar result \cite{TrudingerWang08p}
(but with strictly regular costs and a stronger hypothesis on the domains).

\begin{theorem}\label{T:supporting c-convex}
{\bf (Mountains supporting to first-order support globally)}
Use a continuous cost $c:\cl(M \times \bar M) \longrightarrow \R \cup \{+\infty\}$
which has a twisted and weakly regular {\bf (A0)--(A3w)} restriction
$c \in C^4(N)$ to define a pseudo-metric (\ref{metric}) on a horizontally convex
domain $N \subset M \times \bar M$.
If $c$ is unbounded assume that whenever
$(x_k,\bar x_k) \in \dom c$ converges and $c(x_k,\bar x_k) \to +\infty$ then
\begin{align}\label{cost gradient diverges}
\lim_{k \to \infty} |Dc(x_k,\bar x_k)| = +\infty.
\end{align}
Fix $x \in M$ such that 
$\cl \bar M$ appears convex from $x$,
and suppose $\cap_{0\le t \le 1}N(\bar x(t))$ is
dense in $M$ for each geodesic
$t \in [0,1] \longrightarrow (x,\bar x(t)) \in \{x\} \times \cl \bar M$.
Fix $u \in \mathcal{S}^{-c}_{\cl \bar M}$ for which there is a
coordinate neighbourhood
$U \subset M$ of $x$
contained in $\dom \partial^c u$,  and a constant $C<\infty$ such
that each mountain $f:\cl M \longrightarrow \R$ which supports $u$ to first order at
some $y \in U$ satisfies $|\nabla f|\le C$ and $D^2 f \ge -C \I$ distributionally
in the given coordinates on $U$.
If the extended twist hypothesis {\bf (A1$'$)} holds,
any mountain which supports $u$ to first order at $x$
will be dominated by $u$ throughout $\cl M$.
\end{theorem}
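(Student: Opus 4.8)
The plan is to bootstrap from Lemma~\ref{L:supporting c-convex}, which already settles the assertion wherever $u$ is differentiable, to the possibly singular point $x$, using the convexity of $\partial^c u(x)$ supplied by Theorem~\ref{T:second stab}. First I would observe that the uniform bounds hypothesized on the mountains which support $u$ to first order at points of $U$ force $u$ to be tame on $U$: for each $y\in U\subset\dom\partial^c u$ the supporting mountain realizing the value $u(y)$ lies globally below $u$ and is, on $U$, $C$-Lipschitz with $D^2 f\ge -C\I$ in the given chart, so $u|_U$ is a supremum of uniformly Lipschitz-and-semiconvex mountains and hence itself locally Lipschitz and semiconvex ($D^2 u\ge -C\I$ distributionally), as in Lemma~\ref{L:inherit}. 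Consequently $\partial u(x)\subset T^*_xM$ is a nonempty compact convex set whose extreme points are each realized as a limit $p^*=\lim_k Du(x_k)$ of gradients along some sequence $x_k\to x$ in $U\cap\dom Du$.

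Next I would fix such an extreme point $p^*$ with its approximating sequence and apply Lemma~\ref{L:supporting c-convex} at each $x_k\in M\cap\dom\partial^c u\cap\dom Du$: this gives $(x_k,Du(x_k))\in\cl(\dom\cexp)$, $\partial^c u(x_k)=\{\bar x_k\}$ with $\bar x_k=\cexp'_{x_k}Du(x_k)$ and $-Dc(x_k,\bar x_k)=Du(x_k)$, and a mountain $g_k:=-c(\cdot,\bar x_k)+c(x_k,\bar x_k)+u(x_k)$ dominated by $u$ on all of $\cl M$. Letting $k\to\infty$, closedness of $\cl(\dom\cexp)$ and continuity of $\cexp'$ give $(x,p^*)\in\cl(\dom\cexp)$ and $\bar x_k\to\bar x_\infty:=\cexp'_x p^*$; since $|Dc(x_k,\bar x_k)|=|Du(x_k)|$ stays bounded by $C$, hypothesis~\eqref{cost gradient diverges} rules out $c(x_k,\bar x_k)\to+\infty$, whence $(x,\bar x_\infty)\in\dom c$ and $g_k\to g_\infty:=-c(\cdot,\bar x_\infty)+c(x,\bar x_\infty)+u(x)$ pointwise on $\cl M$ by continuity of $c$ and of $u|_U$. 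From $g_k\le u$ I would conclude $g_\infty\le u$, hence $\bar x_\infty\in\partial^c u(x)$, and also $p^*=-Dc(x,\bar x_\infty)$ by passing to the limit in the cost-exponential identity. Thus every extreme point of $\partial u(x)$ lies in $-Dc(x,\partial^c u(x))$.

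Then I would invoke Theorem~\ref{T:second stab} with $\Omega=\cl M$ and $\bar\Omega=\cl\bar M$ --- whose hypotheses (horizontal convexity of $N$, apparent convexity of $\cl\bar M$ from $x$, density of $\cap_{0\le t\le1}N(\bar x(t))$ along each vertical geodesic through $x$, continuity of $c$ on $\cl N$) are exactly those in force here --- to get that $\partial^c u(x)$ appears convex from $x$; by Lemma~\ref{L:c-segments are geodesics} and the twist condition this makes $-Dc(x,\partial^c u(x))$ a convex subset of $T^*_xM$, and being convex and containing every extreme point of the compact convex set $\partial u(x)$ it must contain all of $\partial u(x)$. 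To finish, given any mountain $f$ supporting $u$ to first order at $x$ and focused at $\bar z\in\cl\bar M$, I note $f(x)=u(x)<+\infty$, so $x\in\dom f$ and {\bf (A1$'$)} makes $\partial f(x)$ a nonempty compact convex set (the bound $|\nabla f|\le C$ applies since $x\in U$); choosing an extreme point $p^*$ of $\partial f(x)$, {\bf (A1$'$)} gives $(x,p^*)\in\cl(\dom\cexp)$ and $\bar z=\cexp'_x p^*$, while first-order support yields $p^*\in\partial f(x)\subset\partial u(x)\subset -Dc(x,\partial^c u(x))$. Writing $p^*=-Dc(x,\bar w)$ with $\bar w\in\partial^c u(x)$ and using that $\cexp'_x$ inverts $-Dc(x,\cdot)$ on $\cl\bar N(x)=\cl\bar M$ one gets $\bar z=\bar w\in\partial^c u(x)$, which by~\eqref{c-subdifferential} is precisely the statement that $u$ dominates $f$ throughout $\cl M$.

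The main obstacle I anticipate is not this logic but the boundary book-keeping it glosses over: one must check that $\bar x_k$, $\bar x_\infty$ and $\bar w$ stay in the closed set $\cl\bar N(x)$ on which the twist condition (extended to $\cl N$) renders $-Dc(x,\cdot)$ injective and invertible by $\cexp'$, that $c$ and $Dc$ extend continuously and $C^1$ respectively up to $\cl N$, that the cost-exponential identity and apparent convexity survive the relevant limits, and that the density hypothesis really does force Theorem~\ref{T:second stab} to deliver apparent convexity of all of $\partial^c u(x)$ rather than of some proper subset. Once these technicalities are dispatched, the interior argument is very short.
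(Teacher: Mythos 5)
Your proposal is correct and follows essentially the same route as the paper's proof: establish Lipschitz/semiconvexity of $u$ on $U$ from the uniform bounds on supporting mountains, show each extreme point $p^*$ of $\partial u(x)$ satisfies $\cexp_x'p^*\in\partial^c u(x)$ by taking limits of the mountains from Lemma~\ref{L:supporting c-convex} along $x_k\to x$ (using \eqref{cost gradient diverges} to control $c(x_k,\bar x_k)$), then use Theorem~\ref{T:second stab} plus the twist condition to make $-Dc(x,\partial^c u(x))$ a convex set swallowing all of $\partial u(x)$, and finally identify the focus of any first-order supporting mountain via an extreme point of its subdifferential. The technical points you flag at the end are exactly those the paper's argument also relies on, so no genuine gap remains.
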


\begin{proof}

Take $u \in \mathcal{S}^{-c}_{\cl \bar M}$ and
$x \in U \subset \dom \partial^c u \setminus \partial M$ as in the theorem.
Assume $x \not\in \dom Du$,  since otherwise the conclusion follows immediately
from Lemma \ref{L:supporting c-convex}.
We see $u$ is Lipschitz and semiconvex on $U$
by Lemma \ref{L:inherit},  after noting that
$u$ agrees throughout $U$ with the supremum of those mountains $f \le u$ which support $u$
on $M$ and make contact with $u$ on $U \subset \dom \partial^c u$.
The subdifferential
$\partial u(x) \subset T^*_x M$ is a non-empty convex set from the semiconvexity of $u$;
it is bounded since $u$ is Lipschitz.
Let us first show that if $p^*$ is an extreme point of $\partial u(x)$
then $\cexp_x' p^* \in \partial^c u(x)$.  Extremality implies $x$
is the limit of a sequence $x_k \in U \cap \dom Du$ such that
$p^* = \lim_{k \to \infty} Du(x_k)$, according to \S 25.6 \cite{Rockafellar72}.
Since $\partial^c u(x_k)$ is non-empty,
Lemma \ref{L:supporting c-convex} yields a sequence of mountains
$f_k(\cdot) = -c(\cdot, \bar x_k) + c(x_k,\bar x_k) + u(x_k)$
supporting $u$ 
and focused at points $\bar x_k = \cexp_{x_k}' Du(x_k)$
which converge to a limit $\bar x := \cexp_x' p^*$.
Since $|Dc(x_k,\bar x_k)|$ is bounded by the Lipschitz constant $C$ of $u$ on $U$,
(\ref{cost gradient diverges}) implies $c(x_k,\bar x_k)$ has a finite limit.
It then follows from $u(\cdot) \ge f_k(\cdot)$
that $f(\cdot) = - c(\cdot, \bar x) + c(x, \bar x) + u(x)$ supports $u$ on $\cl M$.

According to Theorem \ref{T:second stab}, the set $\partial^c u(x)$ of foci
corresponding to mountains $h(\cdot) = -c(\cdot, \bar z) + c(x, \bar z) + u(x)$ which
support $u$ at $x$ appears convex from $x$.  Thus there is a geodesic in $N$ connecting
$(x,\bar x)$ to $(x,\bar z)$.  From the twist condition, this means set of
cotangent vectors $q^*$ such that $\cexp_x' q^* \in \partial^c u(x)$
form a convex set in $T^*_x M$.
This convex set contains
$\partial u(x)$,  since it contains the extreme points of $\partial u(x)$.
Finally,  suppose
a mountain $g(\cdot) = -c(\cdot,\bar y) + \mu$ supports $u$ to first order at $x$.
The semiconvexity of $g(\cdot)$ yields some $q^*$ extremal in $\partial g(x)$
with $\bar y = \cexp_x' q^*$.  Moreover,
$q^* \in \partial u(x)$ a fortiori \eqref{first-order support},  hence there is a
mountain $f$ also focused at $\bar y$ which supports $u$ at $x$.
Now $g=f$ since their foci and values at $x$ coincide,  so the theorem is established.
\end{proof}


\begin{remark}[the sphere, the reflector antenna]
Although the hypotheses of Theorem \ref{T:supporting c-convex} appear complicated,
they are obviously satisfied on any product manifold $N = M \times \bar M$ on which
{\bf (A0)-(A3w)} extend to a larger manifold $\tilde N \supset \supset N$
which contains $N$ compactly.
For the distance squared cost $c=d^2/2$ on a
compact and boundaryless Riemannian manifold $(M=\bar M,g)$,
the cost function is globally Lipschitz and semiconvex
\cite{CorderoMcCannSchmuckenschlager01},  hence all mountains
$f(\cdot) = - d^2(\cdot,\bar y)/2 + \mu$ obey the desired local estimates.
Taking $N \subset M \times \bar M$ to be the complement of the cut locus,
 the only delicacies are bi-convexity of
$N$ and the density of $\cap_{0\le t \le 1}N(\bar x(t))$;
on the round sphere these two properties are satisfied, as Loeper knew and
we discuss elsewhere.
On $\Sphere^n \times \Sphere^n$,
with the reflector antenna cost $c(x,\bar x) = -\log |x -\bar x|$ of
Example \ref{E:reflector antenna},
the mountains are smooth away from their foci,  so choosing the
diameter of $U$ small relative to the Lipschitz constant $C$ of $u$,
(\ref{cost gradient diverges}) combines with Proposition \ref{P:semi-convex reflector antenna}
to force all mountains which support $u$ to first order
in $U$ to have foci well outside of $U$.
\end{remark}

\begin{remark}[products ${\mathbf S}^{n_1} \times \cdots \times {\mathbf S}^{n_k} \times \R^l$ and their Riemannian submersions
]
Theorem~\ref{T:supporting c-convex} holds for the distance squared cost on the Riemannian product
$M = \bar M$ of round spheres ${\mathbf S}^{n_1} \times \cdots \times {\mathbf S}^{n_k}\times \R^l$
or its Riemannian submersions. The necessary hypotheses are satisfied according to
Example~\ref{E:new examples} and Remark~\ref{R:new results on products}.
\end{remark}

\section{Estimating the height of the double mountain relative to the sliding mountain
near their point of coincidence}
\label{S:Taylor}

The following local estimate is the main result of this section;
due to Loeper \cite{Loeper07p}, it is crucial to his argument for
H\"older continuity of optimal mappings.
One can view it as a localized version of the double mountain above sliding mountain
principle (Theorem~\ref{T:maximum principle}),
in which strict regularity of the cost is used to
quantify the relative altitude of the double mountain over the sliding
mountain in a neighbourhood of the point where they coincide.
For this entire section,  we shall work in a single Euclidean coordinate
patch; the norm $|p|^2 = p \cdot p$ and derivatives such as $D^2 c = \{c_{ij}\}$
and $D \bar D c = \{c_{i\bar j}\}$ refer to these coordinates.

\begin{proposition}\label{P:A3S and support}
{\bf (Local double-above-sliding-mountain estimate \cite{Loeper07p})}
Let $c \in C^4(\cl N)$ be strictly regular {\bf (A2)--(A3s)} on the closure of
product $N = M \times \bar M$ of two bounded domains $M$, $\bar M$ $\subset {\bf R}^n$,
so that the pseudo-metric (\ref{metric}) obeys
\begin{align}\label{A3 local}
\sec_{(x,\bar x)} (p \oplus \0) \wedge (\0 \oplus \bar p) \ge 2 C_0 |p|^2 |\bar p|^2
\end{align}
for each $(x, \bar x) \in N$ and null vector
$p \oplus \bar p \in T_{(x, \bar x)} N$, 
the norms $|p|$ and $|\bar p|$ being defined using the Euclidean inner product
on the global coordinates. Then there exist positive constants
$r_0 = r_0(\diam \bar M, \|[D\bar Dc]^{-1}\|_{C^0 (N)}, \|c\|_{C^4(N)}, C_0)$
and $C_1 = C_0  \|2 D \bar Dc \|_{C^0 (N)}^{-2} \|[D \bar Dc]^{-1} \|_{C^0 (N)}^{-2}$
such that:
for each coordinate ball $B_r(x) \subset M$ of radius $0<r<r_0$ and geodesic
$t\in[0,1] \longrightarrow (x, \bar x(t)) \in N$ with $\dot {\bar x}(0) \ne \0$,
the function $f(t,y) :=-c(y,\bar x(t)) + c(x,\bar x(t))$ satisfies
$$
\max [ f(0,y), f(1,y) ]
\ge
f(t,y) + C_1 t(1-t) |\bar x (1)-\bar x (0)|^2 |y- x |^2
- \| c \|_{C^3(N) }|y-x |^3
$$
throughout $(t,y) \in [0,1] \times B_r(x)$.
 \end{proposition}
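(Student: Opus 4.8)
The strategy is to reduce the claimed inequality to a quantitative concavity estimate along the geodesic, and then feed in {\bf (A3s)} through \eqref{A3 local}. Fix the geodesic $t\mapsto(x,\bar x(t))$ and a point $y=x+\xi\in B_r(x)$, and recall the elementary bound $\max\{a,b\}\ge(1-t)a+tb+t(1-t)|a-b|$. I would first expand in $\xi$: writing $q(\tau):=\xi^i\xi^jc_{ij}(x,\bar x(\tau))$, a Taylor expansion with integral remainder of $s\mapsto(1-t)f(0,x+s\xi)+tf(1,x+s\xi)-f(t,x+s\xi)$ about $s=0$ gives
\[
(1-t)f(0,y)+tf(1,y)-f(t,y)=\tfrac12\big[q(t)-(1-t)q(0)-tq(1)\big]+R,\qquad |R|\le\tfrac13\|c\|_{C^3(N)}|y-x|^3,
\]
the zeroth‑ and first‑order terms dropping because the geodesic lies in the totally geodesic leaf $\{x\}\times\bar N(x)$, so $\tau\mapsto Dc(x,\bar x(\tau))$ is affine (Lemma \ref{L:c-segments are geodesics}). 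Thus it suffices to bound the concavity defect $q(t)-(1-t)q(0)-tq(1)$ below by $2C_1\,t(1-t)|\bar x(1)-\bar x(0)|^2|y-x|^2$ in the main case, keeping $t(1-t)|f(0,y)-f(1,y)|$ and $\tfrac23\|c\|_{C^3}|y-x|^3$ in reserve.

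Next I would express this defect through curvature. Applying Lemma \ref{L:mixed fourth derivative} with the given geodesic as the fixed factor–curve (and using affineness of $Dc(x,\bar x(\cdot))$ to replace $q$ by a geodesic $s$–curve of initial velocity $\xi$), one obtains $\ddot q(\tau)=-\tfrac12\sec_{(x,\bar x(\tau))}(\xi\oplus\0)\wedge(\0\oplus\dot{\bar x}(\tau))$, hence, with the nonnegative Green's function $G(t,\tau)=\min(t,\tau)(1-\max(t,\tau))$ (for which $\int_0^1 G(t,\cdot)=\tfrac12 t(1-t)$),
\[
q(t)-(1-t)q(0)-tq(1)=\tfrac12\int_0^1 G(t,\tau)\,\sec_{(x,\bar x(\tau))}(\xi\oplus\0)\wedge(\0\oplus\dot{\bar x}(\tau))\,d\tau.
\]
The decisive structural fact is that $c_{i\bar j}(x,\bar x(\tau))\dot{\bar x}^{\bar j}(\tau)=w_i$ is \emph{independent of $\tau$}, where $w:=Dc(x,\bar x(1))-Dc(x,\bar x(0))$ (again by affineness of $-Dc(x,\bar x(\cdot))$); so $\xi\oplus\dot{\bar x}(\tau)$ is null precisely when $w\cdot\xi=0$, a single fixed hyperplane — the leading‑order valley of indifference between $\bar x(0)$ and $\bar x(1)$, along which the double mountain $\max\{f(0,\cdot),f(1,\cdot)\}$ creases at $x$.

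It remains to do a case analysis on the position of $\xi$ relative to this hyperplane. Write $\xi=\xi_\perp+|w|^{-2}(w\cdot\xi)w$ with $w\cdot\xi_\perp=0$. (i) If $|w\cdot\xi|\le\delta|w||\xi|$, then $\xi_\perp\oplus\dot{\bar x}(\tau)$ is null, so \eqref{A3 local} gives $\sec(\xi_\perp\oplus\0)\wedge(\0\oplus\dot{\bar x}(\tau))\ge 2C_0|\xi_\perp|^2|\dot{\bar x}(\tau)|^2$; controlling the off‑diagonal curvature terms by $\|c\|_{C^4}$ and $\|[D\bar Dc]^{-1}\|_{C^0}$, and using $|\dot{\bar x}(\tau)|\ge\|D\bar Dc\|_{C^0}^{-1}|w|$ together with $|\bar x(1)-\bar x(0)|=|\int_0^1\dot{\bar x}|\le\|[D\bar Dc]^{-1}\|_{C^0}|w|$, the curvature integral — aided if needed by $t(1-t)|f(0,y)-f(1,y)|\ge t(1-t)(|w\cdot\xi|-O(|y-x|^2))$ — dominates $2C_1 t(1-t)|\bar x(1)-\bar x(0)|^2|y-x|^2$ for $\delta$ small (in terms of $C_0$, $\|c\|_{C^4}$, $\|[D\bar Dc]^{-1}\|_{C^0}$); the appearance of $\|2D\bar Dc\|_{C^0}^{-2}$ rather than a cruder $\|D\bar Dc\|_{C^0}^{-2}$ in $C_1$ is exactly what this comparison forces. (ii) If $|w\cdot\xi|>\delta|w||\xi|$ and $|y-x|$ is small relative to $|w|$, then $f(0,y)-f(1,y)=w\cdot\xi+O(|y-x|^2)$ is first order, and since $w$ is $\tau$‑independent, $\frac{d}{d\tau}f(\tau,y)=-(w\cdot\xi)+O(|y-x|^2)$ uniformly in $\tau$; hence $\max\{f(0,y),f(1,y)\}-f(t,y)\gtrsim t(1-t)|w\cdot\xi|$, which beats $C_1 t(1-t)(\diam\bar M)^2|y-x|^2$ once $r_0$ is small. (iii) Otherwise $|\bar x(1)-\bar x(0)|\lesssim r_0^2$, so $C_1 t(1-t)|\bar x(1)-\bar x(0)|^2|y-x|^2-\|c\|_{C^3}|y-x|^3\le 0$ for $r_0$ small and the conclusion reduces to $\max\{f(0,y),f(1,y)\}\ge f(t,y)$, i.e. Theorem \ref{T:maximum principle} on the product domain $N=M\times\bar M$ (where $V(x,\bar x(t))=M$ and $\Lambda=[0,1]\times M$). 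Choosing $r_0$ to meet all thresholds, with the stated dependence, finishes the proof.

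The main obstacle is the passage to non‑null $\xi$. The curvature estimate is sharp only on the null hyperplane $\{w\cdot\xi=0\}$ — there it reproduces $2C_1 t(1-t)|\bar x(1)-\bar x(0)|^2|y-x|^2$ with essentially no slack — so for $\xi$ slightly off it one must recover the lost amount from the first‑order size of $f(0,y)-f(1,y)$, delicately balancing the two against the cubic error, and the three regimes must be stitched together with matched thresholds $\delta$, $r_0$ and a "short‑geodesic" scale. Putting the constants into precisely the form claimed (above all $C_1$) is then a bookkeeping exercise comparing $|w|$, $|\dot{\bar x}(\tau)|$ and $|\bar x(1)-\bar x(0)|$ through $D\bar Dc$ and $[D\bar Dc]^{-1}$; an affine change of coordinates normalizing $D\bar Dc$ at a reference point is the natural way to organize it.
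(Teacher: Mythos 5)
Your argument is essentially the paper's own proof: the affine parametrization $p^*(t)=-Dc(x,\bar x(t))$ kills the first-order Taylor term and fixes a single null hyperplane $\{w\cdot\xi=0\}$, Lemma \ref{L:mixed fourth derivative} converts \eqref{A3 local} into a quantitative concavity defect of $t\mapsto D^2f(t,x)$ on a thin cone about that hyperplane (your case (i) is the paper's Case 1 in rotated coordinates, your Green's function playing the role of its mean value theorem), and off the cone the first-order term $\sim t(1-t)|w\cdot\xi|$ dominates once $r_0$ is small (your case (ii) is its Case 2). The only quibble is your case (iii), where the correct consequence of ``not (ii)'' is $|\bar x(1)-\bar x(0)|\lesssim |y-x|$ rather than $\lesssim r_0^2$ --- with that fix the resulting quartic term is indeed absorbed by the cubic error --- though the paper dispenses with this third regime entirely by bounding $(1-t)|\bar x(1)-\bar x(0)|\le \diam \bar M$ in its Case 2, which then works for all $|y-x|<r_0$ no matter how short the geodesic is.
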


\begin{proof}
Our proof diverges from Loeper's \cite{Loeper07p}, but
has the advantage of producing a radius $r_0>0$ independent of $t \in [0,1]$.

Let $D$ and $D^2$ denote the
gradient and the Hessian operators with respect to the fixed Euclidean coordinates.
For any fixed $w \in T_x M$,  Lemma \ref{L:mixed fourth derivative} asserts
\begin{equation}\label{B sectional curvature}
\frac{d^2}{dt^2} [D^2 f(t,x)] (w, w)
= \frac{1}{2}
\sec_{(x,\bar x (t))} (w \oplus \0) \wedge (\0 \oplus \dot{ \bar x} (t)),
\end{equation}
whence
$$\left|\frac{d^2}{dt^2} [D^2 f(t,x)] (w, w)\right|
\le \frac{1}{2} n^2  |w|^2 |\dot{\bar x}(t)|^2 \sup_{i,\bar j,k,\bar \el} |R_{i\bar j k \bar \el}(x,\bar x(t))|.
$$
from (\ref{sectional curvature}),  and the Cauchy-Schwarz inequality $(\sum_{i=1}^n |w^i|)^2 \le n |w|^2$.

On the other hand, setting $p^*(t) := D f(t,x )=-Dc(x, \bar x (t))$ yields
the linear parameterization
$p^*(t) = p^*(0) + t q^*$ as in (\ref{c-segment}). From
 this, it is easy to see that
$w \oplus \dot{\bar x}(t)$ is an $h$-null vector at $(x, \bar x(t))$
if and only if $q^*_i w^i = 0$ where
\begin{align} \label{def q}
q^*_i = \frac{d p^*_i}{dt}(t) = - c_{i\bar j}(x, \bar x(t)) \dot x^{\bar j}(t).
\end{align}
Note $q^* \ne 0$ because $\dot{\bar x}(0) \ne 0$.
For any vector $w$ in the nullspace of $q^*$,
\eqref{A3 local} and \eqref{B sectional curvature} imply
\begin{align*}
\frac{d^2}{dt^2} [D^2 f(t,x)] (w, w)
 & \ge C_0 |w|^2 |\dot{\bar x}(t)|^2.
\end{align*}

To make these estimates independent of $t$, note \eqref{def q}
combines with nondegeneracy {\bf (A2)} of the cost to imply
that $\dot{\bar x}(t)$ and $q^*$ have comparable Euclidean magnitudes
\begin{align}\label{compare dot x and q}
\frac{1}{\|D\bar D c\|_{C^0 (N)}} |q^*| \le | \dot{\bar x}(t)| \le \|[D \bar D c]^{-1}\|_{C^0(N)} |q^*|.
\end{align}
It follows that
\begin{align}\label{x(1)-x(0)}\nonumber
|\bar x (1)-\bar x(0)|
&\le \int_0^1 |\dot{\bar x}(\tau)| d\tau
\\& \le \|[D \bar D c]^{-1}\|_{C^0(N)} |q^*|.
\end{align}
Combining these estimates, we see
\begin{align}\label{cross-curv and sliding mountain}
\left\{%
 \begin{array}{ll}
\left| \frac{d^2}{dt^2} [D^2 f(t,x)] (w, w) \right|
\le  C_1' |w|^2 |q^*|^2
  & \hbox{ \ \ for general $w$}\\[1ex]
\;\frac{d^2}{dt^2} [D^2 f(t,x)] (w, w)\;
\ge C_0' |w|^2 |q^*|^2
& \hbox{ \ \   for each $w$ with $q^*_i w^i =0$}
  \end{array}%
  \right.
 \end{align}
where
\begin{align}\label{B c0 in proof}
C_0' &=C_0/ \| D \bar D c \|_{C^0 (N)}^2,\\
C_1'
&= \frac{1}{2} n^2 \| [D \bar D c]^{-1}\|_{C^0(N)}^{2} \sup_{i,\bar j,k,\bar \el}\|R_{i\bar j k \bar \el}\|_{C^0(N)}.
\label{B c1 in proof}
\end{align}

In the following, we use \eqref{cross-curv and sliding mountain} and a Taylor expansion argument.
We work in a small ball $B_r (x)\subset M$ of $x$,
whose radius $r$ shall be determined in the course of the proof; note that the condition
$B_r (x) \subset M$ is used only to have the Taylor expansion.
Consider a thin cone
\begin{align*}
K_\theta:=\{ v \in T_x \Omega \mid \Big{|} -\cos\theta \le \frac{q^*_i v^i}{|q^*||v|} \le \cos\theta \}
\end{align*}
around the nullspace of $q^*$.
Here $\pi/4<\theta<\pi/2$ will subsequently be chosen to be large, depending on $C_0'$ and $C_1'$.
We shall establish the theorem by taking $\theta \approx \pi /2$ in case 1 and
$r>0$ small in Case 2 below.

\subsubsection*{Case 1 (Second order):} Let $v \in  B_r(\0)\cap K_\theta$.
Then
\begin{align*}
&\max [ f(0,x+v), f(1,x+v) ] - f(t,x+v) \\
&\ge \ t f(1,x+v) + (1-t) f(0,x+v) - f(t,x+v) \\
&\ge [t D^2 f(1,x) + (1-t) D^2 f(0,x)-D^2 f(t,x) ] (v,v)
- \| c \|_{C^3(N)} |v|^3
\end{align*}
where the first order term $v(t p^*(0) + (1-t) p^*(1) - p^*(t))$ of the Taylor
expansion has vanished due to the linearity of $p^*(t) = p^*(0) + t q^*$.
To bound the second term using \eqref{cross-curv and sliding mountain},
rotate the Euclidean coordinates so that $v=(v_1, V)$,  where
$v_1 = q^*_iv^i/|q^*|$ and $(0,V)$ lies in the nullspace $q^*$.
Note that for $v \in B_r(\0) \cap K_\theta$ we have
$|v_1| < |v|\cos \theta $ and hence $|v|^2\sin^2 \theta \le |V|^2$.
After the mean value theorem, \eqref{cross-curv and sliding mountain} yields
\begin{align*}
&[t D^2 f(1,x) + (1-t) D^2 f(0,x)-D^2 f(t,x) ] (v, v)
\\
&\ge  \frac{1}{2}C_0' t(1-t) |q^*|^2  |V|^2
- \frac{1}{2}C_1'  t(1-t) |q^*|^2  (2|v_1||V|+ |v_1|^2)
\\&\ge \frac{1}{2}\Big{[}C_0' \sin^2 \theta  -3 C_1' \cos \theta \Big{]} t(1-t) |q^*|^2 |v|^2\\
& \ge \frac{1}{4}C_0' t(1-t) \left| \frac{\bar x(1) - \bar x(0)}{\|[D\bar D c]^{-1}\|_{C^0(N)}}\right|^2 |v|^2
\end{align*}
noting \eqref{x(1)-x(0)}, and provided we choose $\pi/4<\theta <\pi/2$ sufficiently large that 
\begin{align}\label{alpha}
\sin^2 \theta \ge \frac{1}{2} + \frac{ 3C_1'}{C_0'} \cos \theta.
\end{align}
This establishes the desired inequality for Case 1 for $\theta$ close to $\pi/2$.

\subsubsection*{Case 2 (First order):} Let $v \in B_r (\0) \setminus K_\theta$,
and recall
\begin{align*}
p^*(1) - p^*(t) &=
(1-t) q^* \\
p^*(0) - p^*(t) & = -t q^*.
\end{align*}
Now either $q^*_i v^i \le -|q^*| |v| \cos\theta$ or $|q^*| |v| \cos\theta \le q^*_i v^i$.
If $q^*_i v^i \le 0$, Taylor expansion yields
\begin{align*}
& \max [ f(0,x+v), f(1,x+v) ] -f(t,x+v)\\
   & \ge    f(0,x+v) - f(t,x+v) \\
    & = (p^*_i(0) - p^*_i(t) ) v^i
    + (D^2 f(0,x) - D^2 f(t,x) )(v,v)  + O (|v|^3) \\
    &  \ge t \cos \theta |v| |q^*|
    - t\| D^2 \bar Dc\|_{C^0 (N)}  \max_{0 < \tau < t} \{|\dot{\bar x}(\tau) |\} |v|^2 - \| c\|_{C^3(N)} |v|^3
\\&  \ge t |v| |q^*| \Big(\cos \theta
   - \| c\|_{C^3 (N)} \|[D\bar D c]^{-1}\|_{C^0(N)} |v| \Big) - \| c\|_{C^3(N)} |v|^3
\\&  \ge t |v| |\bar x(1)-\bar x(0)| \Big(\|[D\bar D c]^{-1}\|_{C^0(N)}^{-1}\cos \theta
   - \| c\|_{C^3 (N)}  |v| \Big) - \| c\|_{C^3(N)} |v|^3
\end{align*}
where \eqref{compare dot x and q}--\eqref{x(1)-x(0)} have been used.
Similarly,  if $q^*_i w^i \ge 0$, then 
\begin{align*}
& \max [ f(0,x+v), f(1,x+v) ]-f(t,x+v)\\
& \ge (1-t) |v| |\bar x(1) -\bar x(0)|
\Big(\|[D\bar D c]^{-1}\|_{C^0(N)}^{-1}\cos \theta - \| c\|_{C^3 (N)}  |v| \Big)
 -\|c\|_{C^3(N)}|v|^3.
\end{align*}
If we assume
\begin{align}\label{r2}
 & |v|< r \leq r_0 := \frac{\| [D\bar D c]^{-1}\|_{C^0 (N)}^{-1} \cos\theta}{
 \| c\|_{C^3 (N)} + C_1 \diam \bar M },
\end{align}
then for $v \in B_r(\0) \setminus K_\theta$ with either sign of $q^*_i v^i$ we find
\begin{align*}
&\max [ f(0,x+v), f(1,x+v) ] - f(t,x+v)
\\\nonumber
& \ge  C_1 t(1-t)|v|^2 |\bar x(1) - \bar x(0)|^2
-\|c\|_{C^3(N)}|v|^3.
\end{align*}

Combining Case 1 with Case 2 establishes the proposition; the constants have the desired dependence
according to \eqref{B c0 in proof},
\eqref{B c1 in proof}, 
\eqref{alpha}, \eqref{r2}, and \eqref{Riemann curvature}.
%
\end{proof}

\section{On fitting sausages into balls}
\label{S:sausage}

Loeper's H\"older continuity argument for optimal mappings $F:\Omega \longrightarrow \bar \Omega$
between domains $M = \Omega \subset \R^n$ and $\bar M = \bar \Omega \subset \R^n$
is based on a localized volume comparison.  To set this up,
for any optimal map which fails to be $C^{1/5}(\Omega;\cl \bar \Omega)$, the next proposition
identifies $\epsilon, \delta>0$ and a null geodesic
$t \in [0,1] \longrightarrow (x,\bar x(t)) \in \Omega \times \bar \Omega$,
such that an $\epsilon$-ball in coordinates around $x$
and a $\delta$-sausage $\mathcal{N}_\delta$ around the middle third of $\bar x(t)$ satisfy
$F^{-1}(\mathcal{N}_\delta) \subset B_\epsilon(x)$.  More precisely, we establish
Proposition \ref{P:sausage construction} for
the potential function $u$ of the mapping $F = \cexp \circ Du$.

We start by defining $\delta$-neighbourhoods and relative $\delta$-neighbourhoods.
As usual, $[\bar x,\bar y]$ denotes the Euclidean convex hull of the two points
$\bar x,\bar y \in \Rn$, and we write 
$Y \subset\subset \bar \Omega$
to indicate that the closure of $Y$ is a compact subset of $\bar \Omega$.

\begin{definition}{\bf (Relative $\delta$-neighbourhoods)}
Given $Y \subset \bar \Omega \subset  \R^n$, denote
\begin{align*}
\mathcal{N}_\delta (Y) = \{ \bar x \in \R^n &\mid
\hbox{there exists $\bar y \in Y$ such that $|\bar x- \bar y| < \delta$} \},\\
\mathcal{N}^{\bar \Omega}_\delta (Y) = \{ x \in \bar \Omega \ \ &\mid
\hbox{there exists $\bar y \in Y$ such that $|\bar x - \bar y| < \delta$}  \\
& \qquad \hbox{and the line segment $[\bar x, \bar y] \subset \bar \Omega$}
\}.
\end{align*}
\end{definition}

\noindent
Note $\mathcal{N}^{\bar \Omega}_\delta(Y) = \bar \Omega \cap \mathcal{N}_\delta(Y)$
if $\bar \Omega \subset \Rn$ is convex.
The point of this section is the following
version of Loeper's Proposition 5.3 \cite {Loeper07p}.
\begin{proposition}{\bf (Fitting sausages into balls)}
\label{P:sausage construction}
Let $c \in C^4(\cl N)$ be strictly regular {\bf (A2)--(A3s)} on the closure of the product
$N=\Omega \times \bar \Omega$ of two bounded domains $\Omega$ and
$\bar \Omega \subset \Rn$.  
Fix $u:\Omega \longrightarrow \R$ semiconvex with constant $C$,  such that
any mountain focused in $\cl \bar \Omega$ and supporting $u$ to first-order 
is dominated by $u$ throughout $\Omega$.
Choose a pair of points $(x_0, \bar x_0)$ and $(x_1,\bar x_1)$
from $\partial^c u \subset \cl(\Omega \times \bar \Omega)$
with $\bar x_0 \ne \bar x_1$ and $[x_0,x_1] \subset \Omega$. From
the definition of $\partial^c u = \partial^c_{\cl \Omega} u$, both mountains
\begin{align}\label{C two mountains}
f_i (\cdot) = - c( \cdot , \bar x_i ) + c (x_i , \bar x_i) + u(x_i), \qquad i=0,1,
\end{align}
support $u$,  hence there exists a point $x$ where the valley of indifference
$f_0 (x) = f_1 (x)$ intersects the line segment $[x_0, x_1] \subset \Omega$.
Suppose a geodesic $t \in ]0,1[ \longrightarrow (x, \bar x(t)) \in N$
for the pseudo-metric \eqref{metric} links $\bar x_0 = \lim_{t \to 0} \bar x(t)$
to $\bar x_1 = \lim_{t \to 1} \bar x(t)$.
Then taking $C', C'' , C'''>0$ sufficiently small
depending only on $r_0$ and $C_1$ from Proposition~\ref{P:A3S and support},
the semiconvexity constant $C$, 
$\|c\|_{C^3(N)}$, 
and $\diam\bar\Omega$, implies: if
$ 
 | x_1 - x_0 |  < (C'C'')^2 |\bar x_0 - \bar x_1 |^5
$ 
then for any $\epsilon>0$ in the non-empty interval
\begin{align}\label{epsilon}
   \frac{1}{C'}  \sqrt{\frac{|x_0 - x_1 |}{|
\bar x_0 - \bar x_1 |}} \le &\epsilon \le C'' |\bar x_0 - \bar x_1|^2, \qquad {\rm and}
\\\label{delta}
& \delta := C''' \, \epsilon \, |\bar x_0 - \bar x_1 |^2,
\end{align}
such that $B_\epsilon(x) \subset \subset \Omega$ we have
$[\cl B_\epsilon(x) \times \{\bar z\}] \cap \partial^c u$ non-empty for each
$\bar z \in \mathcal{N}^{\bar \Omega}_\delta (\{ \bar x(t) \mid
\textstyle \frac{1}{3} \le t \le \frac{2}{3}\})$.
\end{proposition}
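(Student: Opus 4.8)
\emph{Strategy.} For each admissible $\bar z$ the plan is to exhibit a mountain focused at $\bar z$ which supports $u$ \emph{to first order at an interior point of} $B_\epsilon(x)$; the standing hypothesis on $u$ (first-order supporting mountains dominate $u$ globally) then promotes this to a global support, which is exactly a point of $\partial^c u$ lying over $\bar z$ and inside $\cl B_\epsilon(x)$. The candidate mountain is obtained by sliding a mountain focused at $\bar z$ upward until it first meets $u$ on $\cl B_\epsilon(x)$; the whole difficulty is to see the first contact occurs in the open ball, not on its bounding sphere. First I would \textbf{normalize and introduce the sliding family}: put $\mu:=f_0(x)=f_1(x)$ and, for $t\in[0,1]$, set $g_t(\cdot):=-c(\cdot,\bar x(t))+\mu+c(x,\bar x(t))$, a mountain focused at $\bar x(t)$ with $g_t(x)=\mu$; the normalization built into \eqref{C two mountains} gives $g_0=f_0$ and $g_1=f_1$. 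Since $N=\Omega\times\bar\Omega$ is horizontally convex, Theorem~\ref{T:maximum principle} applied to the geodesic $t\mapsto(x,\bar x(t))$ yields $g_t\le\max\{f_0,f_1\}\le u$ throughout $\Omega$ for every $t\in[0,1]$ (endpoint limits equal endpoint values, as $c$ is continuous on $\cl N$ and the geodesic extends continuously to $\bar x_0,\bar x_1$).

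Next comes the \textbf{key comparison}. Fix $t\in[\tfrac13,\tfrac23]$, so $t(1-t)\ge\tfrac29$, and note $\dot{\bar x}(0)\neq\0$ because $\bar x_0\neq\bar x_1$. Proposition~\ref{P:A3S and support} together with $u\ge\max\{f_0,f_1\}$ gives, for $y\in B_\epsilon(x)$ with $\epsilon<r_0$,
\[
u(y)-g_t(y)\ \ge\ C_1\,t(1-t)\,|\bar x_0-\bar x_1|^2\,|y-x|^2-\|c\|_{C^3(N)}\,|y-x|^3 ,
\]
so once $C''$ is small enough that $C''(\diam\bar\Omega)^2<r_0$ and $\|c\|_{C^3(N)}C''<\tfrac{C_1}{9}$, the choice $\epsilon\le C''|\bar x_0-\bar x_1|^2$ forces $u-g_t\ge\tfrac{C_1}{9}|\bar x_0-\bar x_1|^2\epsilon^2$ on $\partial B_\epsilon(x)$. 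On the other hand $|x_0-x|\le|x_0-x_1|$, and taking $C',C''$ small the hypothesis $|x_0-x_1|<(C'C'')^2|\bar x_0-\bar x_1|^5$ makes $|x_0-x_1|<\epsilon$ for every $\epsilon$ in \eqref{epsilon}, so $x_0\in B_\epsilon(x)$; since $(x_0,\bar x_0)\in\partial^c u$ one has $u(x_0)=f_0(x_0)=g_0(x_0)$, hence $(u-g_t)(x_0)=g_0(x_0)-g_t(x_0)$. Now $g_0-g_t$ vanishes at $x$ and its $y$-gradient is $\int_0^t D\bar Dc(y,\bar x(\tau))\dot{\bar x}(\tau)\,d\tau$, bounded by $\|c\|_{C^2(N)}$ times the Euclidean length of the geodesic, which by \eqref{compare dot x and q}--\eqref{x(1)-x(0)} is at most a geometric multiple $K$ of $|\bar x_0-\bar x_1|$; therefore $(u-g_t)(x_0)\le K\|c\|_{C^2(N)}\,|\bar x_0-\bar x_1|\,|x_0-x_1|$. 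Using the \emph{lower} bound $\epsilon\ge\tfrac1{C'}\sqrt{|x_0-x_1|/|\bar x_0-\bar x_1|}$ in \eqref{epsilon}, which gives $\tfrac{C_1}{9}|\bar x_0-\bar x_1|^2\epsilon^2\ge\tfrac{C_1}{9C'^2}|\bar x_0-\bar x_1|\,|x_0-x_1|$, and shrinking $C'$ so that $2K\|c\|_{C^2(N)}<\tfrac{C_1}{9C'^2}$, I obtain $\min_{\cl B_\epsilon(x)}(u-g_t)\le(u-g_t)(x_0)<\tfrac12\min_{\partial B_\epsilon(x)}(u-g_t)$, with room to spare.

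Then I would \textbf{pass to $\bar z$ and conclude}. Given $\bar z\in\mathcal N^{\bar\Omega}_\delta(\{\bar x(t):\tfrac13\le t\le\tfrac23\})$, pick $t\in[\tfrac13,\tfrac23]$ with $|\bar z-\bar x(t)|<\delta$ and $[\bar z,\bar x(t)]\subset\bar\Omega$, and write $u+c(\cdot,\bar z)=(u-g_t)+\psi+\mathrm{const}$ with $\psi(\cdot):=c(\cdot,\bar z)-c(\cdot,\bar x(t))$. Integrating $\bar Dc$ along $[\bar x(t),\bar z]\subset\bar\Omega$ (where $c$ is smooth) gives $|\psi(y)-\psi(y')|\le\|c\|_{C^2(N)}\,\delta\,|y-y'|$, so with $\delta=C'''\epsilon|\bar x_0-\bar x_1|^2$ and $C'''$ small enough that $2\|c\|_{C^2(N)}C'''<\tfrac{C_1}{18}$, the oscillation of $\psi$ over $\cl B_\epsilon(x)$ stays below $\tfrac{C_1}{18}|\bar x_0-\bar x_1|^2\epsilon^2$; combined with the previous step this keeps $\min_{\cl B_\epsilon(x)}[u+c(\cdot,\bar z)]<\min_{\partial B_\epsilon(x)}[u+c(\cdot,\bar z)]$. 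Since $u$ is continuous (being semiconvex), $b_0:=\min_{\cl B_\epsilon(x)}[u+c(\cdot,\bar z)]$ is attained at some $z$, which by this strict inequality lies in the open ball. The mountain $m(\cdot):=-c(\cdot,\bar z)+b_0$ then obeys $m\le u$ on $\cl B_\epsilon(x)$ with $m(z)=u(z)$, hence supports $u$ to first order at the interior point $z$; the standing hypothesis upgrades this to $m\le u$ on all of $\Omega$, i.e.\ $(z,\bar z)\in\partial^c u$ with $z\in B_\epsilon(x)$, so $[\cl B_\epsilon(x)\times\{\bar z\}]\cap\partial^c u\neq\emptyset$. The stated dependence of $C',C'',C'''$ on $r_0$, $C_1$, the semiconvexity constant, $\|c\|_{C^3(N)}$ and $\diam\bar\Omega$ is precisely what was accumulated above.

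\emph{Main obstacle.} The delicate point is the comparison step. The \emph{tempting} comparison point is the valley crossing $x$ itself, but there $(u-g_t)(x)=u(x)-\mu$ need not vanish and is in general only $O(|x_0-x_1|)$ — too crude to beat the $\asymp|\bar x_0-\bar x_1|^2\epsilon^2$ barrier on the sphere once $\epsilon$ is as small as $\tfrac1{C'}\sqrt{|x_0-x_1|/|\bar x_0-\bar x_1|}$. The trick is to compare instead at the genuine contact point $x_0\in B_\epsilon(x)$, where the gap $(u-g_t)(x_0)$ is a \emph{second difference} of $c$ and so carries the small factor $|x_0-x_1|$; the lower endpoint of \eqref{epsilon} is calibrated exactly so that this linear-in-$|x_0-x_1|$ quantity is dominated by $\tfrac{C_1}{9}|\bar x_0-\bar x_1|^2\epsilon^2$. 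Everything else is bookkeeping of the five small constants together with the (routine) fact that the geodesic's Euclidean length is comparable to $|\bar x_0-\bar x_1|$.
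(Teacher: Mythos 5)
Your proof is correct and reaches the stated conclusion, but it diverges from the paper's argument at the key comparison step. The paper pins the mountain focused at $\bar z$ to the value $u(x)$ at the valley point $x$ (i.e.\ works with $g_{\bar z}(\cdot)=-c(\cdot,\bar z)+c(x,\bar z)+u(x)$) and proves $u\ge g_{\bar z}$ on $\partial B_\epsilon(x)$ outright; the price is that the estimate then contains the term $u(x)-\max[f_0(x),f_1(x)]\ge 0$, which must be bounded \emph{above}, and this is exactly where the semiconvexity constant $C$ enters (via $u(x)\le(1-s)u(x_0)+su(x_1)+C|x_0-x_1|^2/2$ followed by a Taylor expansion of $f_0,f_1$ about $x$, using $|Df_0(x)-Df_1(x)|\le\|D\bar Dc\|_{C^0}|\bar x_0-\bar x_1|$). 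You instead leave the $\bar z$-mountain unpinned, slide it to first contact, and compare the boundary values of $u-g_t$ with its value at the genuine contact point $x_0\in B_\epsilon(x)$, where $u=f_0=g_0$ exactly; the interior gap $(g_0-g_t)(x_0)$ is then a pure second difference of the cost of size $O(|\bar x_0-\bar x_1|\,|x_0-x_1|)$ --- the same order the paper extracts at $x$ --- but obtained without any quantitative use of semiconvexity. This is a genuine (if modest) simplification: in your version $C$ is used only to guarantee continuity of $u$ so the minimum is attained. Your "main obstacle" remark slightly mischaracterizes the alternative, however: comparing at $x$ is not actually too crude, since the paper's semiconvexity argument shows $u(x)-\mu=O(|\bar x_0-\bar x_1|\,|x_0-x_1|)+O(|x_0-x_1|^2)$, not merely $O(|x_0-x_1|)$. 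Both routes otherwise share the same skeleton: Proposition~\ref{P:A3S and support} supplies the quadratic barrier on the sphere, the segment condition $[\bar z,\bar x(t)]\subset\bar\Omega$ controls the tilt when replacing $\bar x(t)$ by $\bar z$, and the first-order-implies-global hypothesis converts the interior touching point into a point of $\partial^c u$.

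Two small cautions. First, your opening appeal to Theorem~\ref{T:maximum principle} presumes horizontal convexity of $N=\Omega\times\bar\Omega$, which is \emph{not} among the hypotheses of this proposition; fortunately that step is decorative, since every estimate you actually use comes from Proposition~\ref{P:A3S and support} and from $u\ge\max\{f_0,f_1\}$ (which follows directly from $(x_i,\bar x_i)\in\partial^c u$), so you should simply delete the invocation. Second, your constant $K$ for the Euclidean length of the geodesic rests on $|q^*|\lesssim\|D\bar Dc\|_{C^0(N)}|\bar x_0-\bar x_1|$ together with \eqref{compare dot x and q}; this is legitimate and is the same estimate the paper uses, but it brings $\|[D\bar Dc]^{-1}\|_{C^0(N)}$ into $C'$, which is only implicitly covered by the stated dependence on $r_0$ and $C_1$ --- a bookkeeping imprecision the paper's own proof shares.
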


\begin{proof}
We present a modified version of the original proof of Loeper \cite{Loeper07p}.

In the course of the proof, $\delta >0$  will be determined.
Given
$\bar z \in \mathcal{N}^{\bar \Omega}_\delta (\{ \bar x(t) , t \in [1/3, 2/3]\})$,
define the mountain
\begin{align*}
g_{\bar z} ( \cdot ) := -c(\cdot, \bar z) + c(x , \bar z) + u(x)
\end{align*}
focused at $\bar z$ which agrees with $u(\cdot)$ at $x$ but need not
be a supporting mountain.
Our goal is to specify $\epsilon, \delta >0$ with
$B_\epsilon (x) \subset\subset \Omega$ such that
\begin{align}\label{C boundary estimate}
u(\cdot) \ge  g_{\bar z}(\cdot)  \quad \hbox{throughout $\partial B_\epsilon (x)$}.
\end{align}
Once this is established, it implies the existence of $z$ which
realizes the global minimum of $u(\cdot)  - g_{\bar z}(\cdot)$ on $\cl B_\epsilon(x)$.
Adjusting the height of $f_{\bar z}(\cdot)$ by $u(z) - g_{\bar z}(z)$ yields a
mountain which supports $u(\cdot)$ to first-order at $z$,  hence globally by hypothesis.
This implies $(z,\bar z) \in \partial^c u$ as desired.

By construction, $u \ge \max[f_0, f_1]$ on $\Omega$.
The choice of $\bar z$ implies $t \in [1/3, 2/3]$ exists with
$|\bar z  - \bar x(t) | \le \delta$ and
\begin{align}\label{segment in bar Omega}
\hbox{the line segment} \ [\bar z, \bar x(t)] \subset \bar \Omega.
\end{align}
Assume $f_0(x) =0 = f_1(x)$ without loss of generality, so
\begin{align*}
f_t (\cdot) := -c (\cdot, \bar x (t) ) + c(x, \bar x (t)), \qquad t\in [0,1],
\end{align*}
extends \eqref{C two mountains}.
Let $\epsilon < r_0$ be chosen to satisfy $B_\epsilon (x) \subset \subset \Omega$ later,
where $r_0$ is from Proposition~\ref{P:A3S and support}.
Then for each
$y \in \partial B_\epsilon (x)$,
\begin{align}\label{phi f bar z}\nonumber
u(y) - g_{\bar z} (y)
&\ge \max[f_0 (y) , f_1 (y)] -g_{\bar z} (y) \\
&\ge f_t (y) - g_{\bar z} (y)  + C_1 t(1-t) |\bar x_0 -\bar x_1 |^2 | y-x|^2
- \|c(\cdot, \cdot)\|_{C^3(N)} |y-x|^3.
\end{align}
To achieve \eqref{C boundary estimate}
we want the right hand side nonnegative. First note
\begin{align}\label{f t f bar z}\nonumber
f_t (y) - g_{\bar z} (y)
&= -c(y, \bar x (t)) + c(x , \bar x(t)) + c(y, \bar z) - c(x, \bar z) - u (x) \\
&\ge - \|c\|_{C^2(N)}|y-x | | \bar z-\bar x (t)| - u(x)
\end{align}
by second derivative estimates; this estimate is the only place where we use the condition
\eqref{segment in bar Omega} in the proof.
Recall $x= (1-s) x_0 + s x_1$ for some $s \in [0,1]$. Semi-convexity implies
\begin{align*}
u(x) &\le (1-s)u(x_0 )  + su(x_1) + C |x_0 -x_1 |^2/2
\end{align*}
where
\begin{align*}
& (1-s) u(x_0 ) + s u(x_1)\\
&=(1-s)f_0 (x_0 ) + sf_1 (x_1 )\\
&\le (1-s) D f_0 (x ) (x_0-x ) + sD f_1 (x ) (x_1 -x) + \frac{1}{2}\|c\|_{C^2(N)} (|x_0 - x  |^2 + | x_1 - x |^2)\\
&\le s(1-s) [D f_0 (x )-D f_1(x)](x_0-x_1) + \|c\|_{C^2(N)}  |x_0 - x_1  |^2/2\\
&\le s(1-s)\|D\bar D c\|_{C^0 (N)} |\bar x_0 - \bar x_1| |x_0 - x_1 | + \|c\|_{C^2(N)}  |x_1 - x_0 |^2/2
\end{align*}
Therefore,
\begin{align*}
u(x) &\le \|D\bar D c\|_{C^0 (N)} |\bar x_0 - \bar x_1| |x_0 - x_1 | +
(\|c\|_{C^2(N)} +C) |x_0 -x_1 |^2/2.
\end{align*} From
\eqref{phi f bar z} and \eqref{f t f bar z}, the desired inequality
\eqref{C boundary estimate} reduces to the following:
\begin{align*}
& 0 \le \ C_1 t(1-t) |\bar x_0 -\bar x_1 |^2 \epsilon^2 - \|c\|_{C^2(N)}\epsilon \delta -
\|c\|_{C^2(N)}|\bar x_0 -\bar x_1 ||x_0 - x_1 |\\
& \ \ \ \ \  -(\|c\|_{C^2(N)}+ C) |x_1 - x_0 |^2/2
-\|c\|_{C^3(N)} \epsilon^3
\end{align*}
which can be satisfied if the negative terms are small relative to the positive term
$C_1 t(1-t) |\bar x_0 -\bar x_1 |^2 \epsilon^2 \gtrsim |\bar x_0 -\bar x_1 |^2 \epsilon^2.$
This will be the case if
\begin{align*}
\delta & \lesssim |\bar x_0 - \bar x_1 |^2 \epsilon,\\
|x_0 - x_1 | & \lesssim |\bar x_0 -\bar x_1 | \epsilon^2,\\
\epsilon & \lesssim 1,\\
\epsilon  & \lesssim |\bar x_0 - \bar x_1 |^2
\end{align*}
where all the relevant constants (for $\lesssim$)
depend only on $C$, $C_1$, $\|c\|_{C^2(N)}$, and $\|c\|_{C^3(N)}$.
Fixing suitable constants $C'$, $C''$ and $C'''$ implies
the choices \eqref{epsilon} and \eqref{delta} suffice,
provided $C''$ is taken small enough
depending on $r_0$ 
and $\diam \bar \Omega$ to have $\epsilon < r_0$ 
as required.  For the interval \eqref{epsilon} to be non-empty,  we need
$
|x_0 - x_1 | < (C'C'')^2 |\bar x_0 - \bar x_1 |^5,
$ 
which is enough to ensure compatibility of the preceding four inequalities.
\end{proof}

\section{H\"older continuity of optimal transport by volume comparison}
\label{S:volume comparison}


Let us now show how the H\"older continuity of optimal mappings ---
Loeper's Theorem 3.4 and Proposition 5.5 \cite{Loeper07p} ---
follows from Theorem~\ref{T:supporting c-convex}, 
Proposition~\ref{P:A3S and support},
Proposition~\ref{P:sausage construction},
and Loeper's volume comparison argument, which we now recall.
Note that H\"older continuity of the optimal transport map $F = \cexp \circ Du$
implies H\"older differentiability of the corresponding potential
$u \in \mathcal{S}^{-c}_{\cl \bar M}$.

\begin{theorem}\label{T:Hoelder}{\bf (H\"older continuity \cite{Loeper07p})}
Assume $c \in C^4(\cl N)$ is bi-twisted and strictly regular {\bf (A1)--(A3s)} on $\cl N$, where
$N=\Omega \times \bar \Omega \subset \Rn \times \Rn$ 
is a bounded domain
bi-convex with respect to the pseudo-metric \eqref{metric}.
Fix $m>0$, and let $\rho$ and $\bar \rho$ be probability measures on $\Omega$ and
$\bar \Omega$ with
Lebesgue densities
$d\bar \rho/d\vol \ge m$ throughout $\bar \Omega$ and $d\rho/d\vol \in L^\infty(\Omega)$.
Then there exists a map $F \in C^{1/\max\{5,4n-1\}}_{loc}(\Omega,\cl \bar \Omega)$
between $\rho$ and $\bar \rho$ which is optimal with respect to the transportation cost $c$.
\end{theorem}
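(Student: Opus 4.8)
The plan is to follow Loeper's volume-comparison scheme, now fed by the self-contained ingredients assembled above. First I would establish existence and uniqueness of an optimal map $F = \cexp\circ Du$ together with a $c$-convex potential $u\in\mathcal{S}^{-c}_{\cl\bar\Omega}$: since $c$ is bi-twisted and $\rho$ is absolutely continuous, the standard theory (Gangbo, Carlier, Ma--Trudinger--Wang, as reviewed in \cite{Villani06}) gives $F$ and $u$ with $\partial^c u$ containing the graph of $F$, and $F$ pushes $\rho$ forward to $\bar\rho$. The bi-convexity and bi-twisting hypotheses guarantee $F$ maps into $\cl\bar\Omega$ and that $\partial^c u(x)$ is well-behaved. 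By Corollary \ref{C:Lipschitz/semiconvex} (or Lemma \ref{L:inherit}(c)--(d)), $u$ is locally Lipschitz and semiconvex on $\Omega$ with constants controlled by $\|c\|_{C^2(\cl N)}$, so that $\cl\Omega\subset\dom\partial^c u$ and the hypotheses of Proposition \ref{P:sausage construction} are met. Crucially, Theorem \ref{T:supporting c-convex} (whose hypotheses hold here because $N=\Omega\times\bar\Omega$ is bi-convex, $c$ extends weakly regularly to a neighbourhood, and on a product $N(\bar x)=\Omega$ so the density condition is vacuous) tells us that any mountain supporting $u$ to first order at a point supports $u$ globally on $\cl\Omega$ — this is the local-implies-global fact Proposition \ref{P:sausage construction} requires.

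Next I would run the volume comparison. Suppose $F$ (equivalently $u$) fails to be $C^{1/\max\{5,4n-1\}}_{loc}$ near some point. Then one extracts a sequence of pairs $(x_0,\bar x_0),(x_1,\bar x_1)\in\partial^c u$ with $|x_0-x_1|$ much smaller than the corresponding power of $|\bar x_0-\bar x_1|$; bi-convexity provides a null geodesic $t\mapsto(x,\bar x(t))$ in $(N,h)$ joining $\bar x_0$ to $\bar x_1$ over a point $x$ in the valley of indifference on the segment $[x_0,x_1]$. Proposition \ref{P:sausage construction} then yields $\epsilon,\delta>0$ in the stated ranges with $B_\epsilon(x)\subset\subset\Omega$ and $F^{-1}\big(\mathcal{N}^{\bar\Omega}_\delta(\{\bar x(t):1/3\le t\le 2/3\})\big)\subset \cl B_\epsilon(x)$ — i.e. all the mass sent by $F$ into the $\delta$-sausage around the middle third of the geodesic originates inside a ball of radius $\epsilon$. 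Since $d\bar\rho/d\vol\ge m$, the sausage carries mass at least $c_n\, m\,\delta^{n-1}|\bar x_0-\bar x_1|$ (the sausage has length comparable to $|\bar x_0-\bar x_1|$ and cross-sectional radius $\delta$); since $d\rho/d\vol\le\|d\rho/d\vol\|_\infty$ and $F_\#\rho=\bar\rho$, that same mass is at most $\|d\rho/d\vol\|_\infty\,\omega_n\,\epsilon^n$. Comparing and inserting $\delta\asymp\epsilon|\bar x_0-\bar x_1|^2$ and the admissible $\epsilon\asymp\sqrt{|x_0-x_1|/|\bar x_0-\bar x_1|}$ gives, after arithmetic, a bound of the form $|\bar x_0-\bar x_1|\lesssim |x_0-x_1|^{\beta}$ with $\beta = 1/\max\{5,4n-1\}$ (the two branches coming from the two constraints in \eqref{epsilon}: the lower bound on $\epsilon$ controls the $1/(4n-1)$ regime and the upper bound $\epsilon\le C''|\bar x_0-\bar x_1|^2$ the $1/5$ regime). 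Since $(x_i,\bar x_i)$ range over $\partial^c u$ and $F(x_i)=\bar x_i$ a.e., this is precisely the asserted local H\"older estimate on $F$.

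The main obstacle I anticipate is the bookkeeping in the volume comparison: one must verify that the sausage $\mathcal{N}^{\bar\Omega}_\delta$ genuinely has $(n{-}1)$-dimensional cross-section of size $\delta$ and length comparable to $|\bar x_0-\bar x_1|$ over the middle third (the relative-neighbourhood definition, requiring segments to stay in $\bar\Omega$, is what makes bi-convexity of $\bar\Omega$ essential here), and that the inclusion $F^{-1}(\mathcal N^{\bar\Omega}_\delta)\subset\cl B_\epsilon(x)$ is used correctly — it says the $\rho$-mass in $\cl B_\epsilon(x)$ dominates the $\bar\rho$-mass in the sausage, the right direction for the comparison. A secondary subtlety is ensuring the extraction of the bad pairs $(x_0,\bar x_0),(x_1,\bar x_1)$ from a hypothetical failure of H\"older continuity is uniform (one works at a scale and takes the worst exponent as the scale shrinks), and that $[x_0,x_1]\subset\Omega$ and $B_\epsilon(x)\subset\subset\Omega$ can be arranged — this is where working locally and shrinking the base domain slightly is needed, which is why the conclusion is stated with a $C^{\,\cdot}_{loc}$ subscript. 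Everything else — existence, regularity of $u$, the geodesic, and the local double-above-sliding-mountain estimate — is supplied verbatim by the results proved earlier in the paper.
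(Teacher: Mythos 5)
Your proposal is correct and follows essentially the same route as the paper: Kantorovich duality and Lemma \ref{L:inherit}/Corollary \ref{C:Lipschitz/semiconvex} to obtain a Lipschitz, semiconvex potential $u$ with $\cl\Omega\subset\dom\partial^c u$; Theorem \ref{T:supporting c-convex} to verify the local-implies-global hypothesis of Proposition \ref{P:sausage construction}; and then the sausage-versus-ball volume comparison, with the two exponents $1/(4n-1)$ and $1/5$ arising exactly as you describe from the lower bound on $\epsilon$ and from the non-emptiness of the interval \eqref{epsilon}, respectively. The subtleties you flag (the relative-neighbourhood volume estimate \eqref{CN and N} via convexity of $Dc(x,\bar\Omega)$, the direction of the mass inequality, and localizing to $\Omega_\eta:=\Omega\setminus\mathcal{N}_\eta(\partial\Omega)$ to keep $[x_0,x_1]$ and $B_\epsilon(x)$ inside $\Omega$) are precisely the points the paper's proof attends to.
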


\begin{proof}

Note $c \in C^4(\cl N)$ since strict regularity extends to a neighbourhood of $\cl N$
by hypothesis.
By Kantorovich duality,
there exists $u \in \mathcal{S}^{-c}_{\cl \bar \Omega}$
such that the optimal $\gamma$ in (\ref{Kantorovich})
vanishes outside $\partial^c u = \partial^c_{\cl \Omega} u$
\cite{RachevRuschendorf98} \cite{Villani03}.
Moreover, both $u$ and its transform $\bar u := u^{c^*}_{\cl \Omega}$ are globally
Lipschitz and semiconvex from Lemma \ref{L:inherit} and our hypotheses on the cost.
By Corollary \ref{C:Lipschitz/semiconvex}, $\cl \Omega \subset \dom \partial^c u$.
Since $u(x) + \bar u(\bar x) + c(x,\bar x) \ge 0$ with equality on $\partial^c u$,
the bi-twist condition yields
$\partial^c u \cap (\dom Du \times \dom \bar D \bar u) \subset
\graph (F) \cap \antigraph(G)$ where $F := \cexp \circ Du$ and $G := \cdexp \circ D\bar u$
 \cite{Gangbo95} \cite{Carlier03} \cite{MaTrudingerWang05}.
Here $\antigraph(G) := \{(G(\bar x),\bar x) \mid \bar x \in \dom D\bar u\}$,
and $\dom Du \times \dom \bar D \bar u \subset \Omega \times \bar \Omega$
is a set of full mass for $\rho \otimes \bar \rho$, a fortiori for $\gamma$.
Moreover, $x=G(F(x))$ holds for $\rho$-a.e. $x \in \Omega$,
and $\bar x = F(G(\bar x))$ for $\vol$-a.e.\ $\bar x \in \bar \Omega$.
We shall establish the theorem following \cite{Loeper07p}.

Since $N$ is horizontally-convex and {\bf (A1$'$)} is satisfied,
Theorem~\ref{T:supporting c-convex} ensures the function $u$
satisfies the hypotheses of Proposition~\ref{P:sausage construction}:
namely that that any mountain supporting $u$ to first-order supports $u$ globally.
Fix $\eta>0$, and let $\Omega_\eta := \Omega \setminus \mathcal{N}_{\eta}(\partial \Omega)$.
Then $x_0,x_1 \in \Omega_{\eta}$ with $|x_0 - x_1| < \eta/2$ implies
$[x_0,x_1]\subset \Omega_{\eta/2}$,  so any $\epsilon<\eta/2$
implies $\mathcal{N}_\epsilon([x_0,x_1]) \subset\subset \Omega$.

Pick distinct points $x_0 , x_1 \in \Omega_\eta$ with 
$(x_i,\bar x_i) \in  \partial^c u(x_i) \cap (\dom Du \times \dom \bar D\bar u)$, $i=0,1$,
such that
\begin{align}\label{D good points}
|x_0 - x_1 | & < \min\{
(C'C'')^2 |\bar x_0 - \bar x_1 |^5
,(\frac{\eta C'}{2})^2|\bar x_0 -\bar x_1|
,\frac{\eta}{2}
\}.
\end{align}
If there no such pair of points exists, then
$F \in {C^{1/5}(\Omega_\eta; \cl\bar \Omega)}$
with a H\"older constant depending on $\eta,C',C'',$ and the Euclidean path
diameter of $\Omega_\eta$.
Choosing $x$ as in Proposition~\ref{P:sausage construction},
the vertical-convexity of $N$
yields a geodesic $t \in [0,1] \to (x, \bar x(t)) \in N$
connecting $(x,\bar x_0)$ to $(x,\bar x_1)$.

Unless $(x_0,\bar x_0)$ and $(x_1,\bar x_1)$ can be chosen to make
$\epsilon = (|x_0-x_1|/|\bar x_0 -\bar x_1|)^{1/2}/C'$ and hence
$\delta = C'''\epsilon|\bar x_0 -\bar x_1|^2$
arbitrarily small in \eqref{epsilon}--\eqref{delta},  the map $F$ is Lipschitz
on $\Omega_\eta$. Noting
$0< \epsilon<\eta/2$, Proposition \ref{P:sausage construction} yields
\begin{align}\label{sausage conclusion}
& \mathcal{N}^{\bar \Omega}_\delta (\{ \bar x(t), t \in [1/3, 2/3]\}) \subset \partial^c u(B_\epsilon (x)).
\end{align}
Since the cost $c$ is bi-twisted, 
the map $G := c^*\Exp \circ D \bar u$ is well-defined 
on the set $\dom D\bar u \subset \bar \Omega$ of
differentiability for $\bar u := u^{c^*}_{\cl \Omega}$. 
This set has full Lebesgue measure (its complement has Hausdorff dimension
$n-1$),  and if $(x,\bar x) \in \partial^c u$ with $\bar x \in \dom D\bar u$
then $x=G(\bar x)$.
Now (\ref{sausage conclusion}) asserts
\begin{align}\label{sausage in the image of meat ball}
B_\epsilon (x)
&\supset G \Big( \mathcal{N}^{\bar \Omega}_\delta (\{ \bar x(t), t \in [1/3, 2/3]\})
\cap \dom D\bar u \Big) \cap \dom Du
\\ \nonumber
&= F^{-1} \Big( \mathcal{N}^{\bar \Omega}_\delta (\{ \bar x(t), t \in [1/3, 2/3]\})
\cap \dom D\bar u \Big) \cap \dom Du.
\end{align}
Since $F_\# \rho = \bar \rho$ has density $d\bar \rho / d\vol \ge m$,
integrating $\rho$ over these two sets yields
\begin{align}\label{integrated meatball}
\|\rho\|_{L^\infty(\Omega)} \vol B_\epsilon(x) \ge
m \vol (\mathcal{N}^{\bar \Omega}_\delta (\{ \bar x(t), t \in [1/3, 2/3]\})).
\end{align}
%

For $c\in C^4(N)$ and twisted, 
$\cexp:Dc(x,\bar \Omega) \longrightarrow \bar \Omega$
gives $C^3$ diffeomorphism.
The pre-image of $\{\bar x(t)\}_{t \in [1/3,2/3]}$ under this map is a segment
in $Dc(x,\bar \Omega) \subset T^* \Omega$ according to Lemma \ref{L:c-segments are geodesics}.
Noting that bi-convexity of $N$ implies convexity of $Dc(x,\bar \Omega)$,
it is not hard to see
\begin{align}\label{CN and N}
\vol (\mathcal{N}^{\bar \Omega}_\delta (\{ \bar x(t), t \in [1/3, 2/3]\}))
\gtrsim \vol(\mathcal{N}_\delta (\{ \bar x(t), t \in [1/3, 2/3]\}))
\end{align}
for $\delta> 0$ sufficiently small depending on $c$ and $N$ but not on
$\bar x_0,\bar x_1$ or $x$. Recalling (\ref{delta}) then yields
\begin{align}\label{volume compare}
\epsilon^n \|\rho\|_{L^\infty(\Omega)}
&\gtrsim m \delta^{n-1} |\bar x_0 - \bar x_1| 
\\&= m (C'''\epsilon)^{n-1} |\bar x_0 - \bar x_1|^{2n-1}.
\nonumber
\end{align}
Thus whenever $\epsilon = (|x_0-x_1|/|\bar x_0 -\bar x_1|)^{1/2}/C'$ and hence $\delta$
is sufficiently small we find
$$
\|\rho\|_{L^\infty(\Omega)}^2 |x_0-x_1| \gtrsim (m C')^2(C''')^{2n-2} |\bar x_0 - \bar x_1|^{4n-1}
$$
so $F \in C^{1/(4n-1)}\cup C^{0,1}(\Omega_\eta, \cl \bar \Omega)$.
Since $\eta>0$ was arbitrary,  the proof is complete.
\end{proof}


\begin{remark}[Rougher measures]\label{R:rougher measures}
Note that no smoothness is required of the measures in the
preceding theorem:  $\bar \rho$ need not even be absolutely continuous with
respect to Lebesgue and the support of $\rho$ need not be connected.
Replacing the hypothesis $d\rho/d\vol \in L^\infty(\Omega)$ by
the existence of $p \in ] n, +\infty]$ and $C(\rho) > 0$ such that
\begin{align}\label{p-condition}
&\rho (B_\epsilon (x) ) \le C(\rho) \epsilon^{n-{n}/{p}}
\qquad \hbox{for all $\epsilon \ge 0 ,\quad x \in \Omega$},
\end{align}
Loeper  \cite{Loeper07p} was able to conclude
$F \in C^{1/5}_{loc} \cup C^{(1-\frac{n}{p})/(4n-1-\frac{n}{p})}_{loc}(\Omega, \cl \bar \Omega)$
by assuming $n \ge 2$ and modifying the passage from
\eqref{sausage in the image of meat ball} to \eqref{integrated meatball}
in the obvious way in the argument above.
Replacing \eqref{p-condition} by the even weaker condition that there exists
$f:\R_+ \longrightarrow \R_+$ with $\lim_{\epsilon \to 0} f(\epsilon) = 0$ such that
\begin{align}\label{p=n condition}
&\rho (B_\epsilon (x) ) \le f(\epsilon)\epsilon^{n-1}
\qquad \hbox{for all $\epsilon \ge 0,\quad x \in \Omega$},
\end{align}
Loeper was able to conclude continuity of $F:\Omega \longrightarrow \cl \bar \Omega$ similarly.
In these cases $\rho$ need not be absolutely continuous with respect to Lebesgue measure,
though \eqref{p=n condition} precludes it from
charging rectifiable sets of dimension less than or equal to $n-1$.
\end{remark}

\begin{remark}[H\"older continuity of the boundary map]
Strictly regular and bi-twisted on $\cl N$ means
$N = \Omega \times \bar \Omega$ is compactly contained in a larger domain
$N' \subset \Rn \times \Rn$ on which $c \in C^4(N')$ remains
strictly regular and bi-twisted 
{\bf (A0)--(A3s)}.  If we allow $N' \not\subset \Rn \times \Rn$ to be a manifold
which contains $\cl(N)$ compactly,  we see the boundedness hypotheses on $M=\Omega$
and $\bar M = \bar \Omega$ become unnecessary in Propositions
\ref{P:A3S and support}--\ref{P:sausage construction} and Theorem \ref{T:Hoelder}.
If a bi-convex product domain
$N'=\Omega' \times \bar \Omega \subset \subset \tilde N$
exists with $\Omega \subset\subset \Omega'$, then for $n \ge 2$
Theorem \ref{T:Hoelder} asserts $F \in C^{1/(4n-1)}_{loc}(\Omega',\cl \bar \Omega)$,
hence $F \in C^{1/(4n-1)}(\Omega,\cl \bar \Omega)$.
\end{remark}

\section{Continuity of optimal transport on the sphere}
\label{S:spherical continuity}

This section presents
a continuity result of optimal transport on the sphere as an \emph{a priori} step
toward H\"older continuity.
%
%
%
It is known from the work of Cordero-Erausquin, McCann \& Schmuckenschl\"ager
\cite{CorderoMcCannSchmuckenschlager01} that
on a compact $n$-dimensional Riemannian manifold $(M=\bar M, g)$
with the Riemannian distance squared cost $c=d^2/2$ of Example \ref{E:Riemannian},
the optimal map stays away from cut locus for $\rho$-almost all points,
when the measures $\rho$ and $\bar \rho$ do not charge rectifiable sets of Hausdorff dimension
$n-1$.  On the round sphere,  Delano\"e \& Loeper provided a quantitative
estimate of the distance separating $F(x)$ from the cut locus of $x$ in terms of
$\|\log d\rho/d\vol\|_{L^\infty(\Sphere^n)}$ and
$\|\log d \bar \rho/ d\vol\|_{L^\infty(\Sphere^n)}$.
In Corollary~\ref{C:UAC} we use the preceding ideas to recover
a form of this statement which, though less sharp than the result of
\cite{DelanoeLoeper06},  still suffices for our purpose: namely, to
give a self-contained proof of Loeper's H\"older continuity result on the sphere,
Theorem \ref{T:regular sphere}.
It is not known whether optimal maps with respect to strictly regular costs
$c=d^2/2$ on other Riemannian manifolds stay uniformly away from the cut locus,
however.

\begin{theorem}\label{T:C 0 sphere}{\bf (Continuity of maps on the sphere)}
Let $M=\partial \Ball_1(\0) \subset \R^{n+1}$ be the Euclidean unit sphere,
also denoted $\Sphere^n$, equipped with either cost:
\begin{itemize}
\item[(a)] $c(x,\bar x) = \dist^2(x , \bar x)/2$, where $\dist$
denotes the great circle distance in $\Sphere^n$,
\item[(b)] $c(x, \bar x) = -\log |x -\bar x|$ where $|\cdot|$ denotes the Euclidean distance of $\R^{n+1}$.
\end{itemize}
Fix probability measures $\rho, \bar \rho$ on the round sphere $\Sphere^n$,
one satisfying \eqref{p=n condition} and
the other having a density $d \bar \rho / d\vol \ge m>0$ 
with respect to Riemannian volume 
on $\Sphere^n$.
Then some continuous map $F$ on $\Sphere^n$ between $\rho$ and
$\bar \rho$ is optimal with respect to the cost function $c$ and
takes the form $F = \cexp \circ Du$ with $u \in C^1(\Sphere^n) \cap \mathcal{S}^{-c}_{\Sphere^n}$.
\end{theorem}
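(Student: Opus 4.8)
The plan is to run the volume-comparison argument of Appendix~\ref{S:volume comparison} in the simpler setting where only continuity (not a H\"older exponent) is sought, so that one can afford to work at a single point where continuity might fail and derive a contradiction with the density hypothesis \eqref{p=n condition}. First I would invoke Kantorovich duality to produce $u \in \mathcal{S}^{-c}_{\Sphere^n}$ whose $c$-subdifferential $\partial^c u$ carries the optimal plan $\gamma$; here I must check that the relevant hypotheses apply to both costs. For (a), the Riemannian distance squared on a compact boundaryless manifold is globally Lipschitz and semiconcave by Cordero-Erausquin, McCann \& Schmuckenschl\"ager \cite{CorderoMcCannSchmuckenschlager01}, so $u$ is Lipschitz and semiconvex by Corollary~\ref{C:Lipschitz/semiconvex}; for (b) the same conclusions about $u$ follow from Proposition~\ref{P:semi-convex reflector antenna}, once one notes $u$ cannot be a single mountain (else $\rho$ would be a point mass, contradicting \eqref{p=n condition}). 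In both cases {\bf (A0)--(A3s)} hold on the complement $N$ of the cut locus (the diagonal in case (b)), $N$ is bi-convex, and $\cap_{0\le t\le1} N(\bar x(t))$ is dense for each $h$-geodesic --- these are recorded in Examples~\ref{E:reflector antenna}, \ref{E:sphere} and \ref{E:Riemannian}. The extended twist {\bf (A1$'$)} holds in both cases as discussed after its definition. Hence Theorem~\ref{T:supporting c-convex} applies: every mountain supporting $u$ to first order supports $u$ globally.

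Next I would set up the argument for continuity of $F = \cexp \circ Du$ by contradiction. If $F$ (equivalently $Du$) fails to be continuous at some $x_*$, one obtains sequences $x_0^{(j)}, x_1^{(j)} \to x_*$ with foci $\bar x_0^{(j)}, \bar x_1^{(j)}$ staying a definite distance $\ge \mu > 0$ apart; passing to subsequences, $|\bar x_0^{(j)} - \bar x_1^{(j)}|$ is bounded below while $|x_0^{(j)} - x_1^{(j)}| \to 0$. Because $N$ is bi-convex, I get an $h$-geodesic $t \in [0,1] \mapsto (x^{(j)},\bar x^{(j)}(t))$ joining $(x^{(j)},\bar x_0^{(j)})$ to $(x^{(j)},\bar x_1^{(j)})$, where $x^{(j)}$ lies on the valley of indifference $f_0 = f_1$ on $[x_0^{(j)},x_1^{(j)}]$ as in Proposition~\ref{P:sausage construction}. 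Applying Proposition~\ref{P:sausage construction} (whose hypotheses on $u$ are guaranteed by Theorem~\ref{T:supporting c-convex} and whose domain boundedness becomes irrelevant since $\Sphere^n$ is compact, as in the closing Remark of Appendix~\ref{S:volume comparison}), I obtain radii $\epsilon_j \to 0$ and $\delta_j = C''' \epsilon_j |\bar x_0^{(j)} - \bar x_1^{(j)}|^2 \gtrsim \epsilon_j$ such that the $\delta_j$-sausage around the middle third of the geodesic $\bar x^{(j)}(t)$ lies inside $\partial^c u(B_{\epsilon_j}(x^{(j)}))$. Using bi-twisting to replace $\partial^c u$ by the map $G = \cdexp \circ D\bar u$ almost everywhere, this says $F^{-1}$ of that sausage sits inside $B_{\epsilon_j}(x^{(j)})$ up to a $\rho$-null set.

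Now I would compare masses: since $F_\#\rho = \bar\rho$ and $d\bar\rho/d\vol \ge m$, the $\bar\rho$-mass of the sausage is $\gtrsim m \delta_j^{\,n-1} |\bar x_0^{(j)} - \bar x_1^{(j)}| \gtrsim m \epsilon_j^{\,n-1}$ (the relative neighbourhood has comparable volume to the absolute one by \eqref{CN and N}, using convexity of $Dc(x,\bar\Omega)$ and that $\cexp$ is a $C^3$ diffeomorphism by Lemma~\ref{L:c-segments are geodesics}), while the $\rho$-mass of $B_{\epsilon_j}(x^{(j)})$ is $\le f(\epsilon_j)\epsilon_j^{\,n-1}$ by \eqref{p=n condition}. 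Thus $f(\epsilon_j) \gtrsim m$, which contradicts $f(\epsilon_j) \to 0$ as $\epsilon_j \to 0$. Therefore $Du$ is continuous, hence $u \in C^1(\Sphere^n)$ and $F = \cexp \circ Du$ is continuous and optimal. The main obstacle I expect is purely bookkeeping: verifying uniformly (along the sequence, and for both cost functions) that the mountains supporting $u$ to first order near $x_*$ obey the local Lipschitz and semiconvexity bounds required by Propositions~\ref{P:A3S and support} and \ref{P:sausage construction} --- for case (b) this needs the quantitative statement of Proposition~\ref{P:semi-convex reflector antenna} together with choosing the coordinate neighbourhood $U$ small relative to $\Lip(u,U)$ so that no supporting mountain is focused inside $U$ (so \eqref{cost gradient diverges} is not an issue), and for case (a) it needs the locally uniform semiconcavity of $d^2/2$ away from the cut locus. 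Everything else is a direct transcription of Loeper's volume-comparison scheme with the H\"older exponent extraction replaced by a limiting argument.
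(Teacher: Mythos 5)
Your overall scheme --- Kantorovich duality, Lipschitz/semiconvexity of $u$, Theorem~\ref{T:supporting c-convex} to upgrade first-order support to global support, then Proposition~\ref{P:sausage construction} plus \eqref{p=n condition} in a volume comparison that forces a contradiction --- is exactly the paper's strategy, and your quantitative bookkeeping (with $|\bar x_0^{(j)}-\bar x_1^{(j)}|\ge\mu$ bounded below and $\epsilon_j\to 0$) would close the argument in the configuration you describe. The gap is in how you extract that configuration from the failure of the conclusion. You reduce to ``$F$ (equivalently $Du$) discontinuous at $x_*$'' and from this produce two sequences of contact pairs whose \emph{foci} stay a distance $\mu>0$ apart. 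But the theorem asserts $u\in C^1(\Sphere^n)$, and for cost (a) the implication ``$\partial u(x_*)$ multivalued $\Rightarrow$ nearby contact points with separated foci'' fails: the cost exponential $\cexp'_{x_*}$ collapses the entire boundary sphere $\{|p^*|=\pi\}$ of its domain to the single antipodal point $-x_*$. If all extreme points of the (multivalued) convex set $\partial u(x_*)$ have magnitude $\pi$ --- e.g.\ $\partial u(x_*)$ is a chord of, or the whole, closed ball of radius $\pi$, as happens when $u$ is locally a mountain focused at the antipode --- then every supporting mountain obtained as a limit of gradients from nearby differentiability points is focused at $-x_*$, the images $F(x_i^{(j)})$ all converge to $-x_*$, and no separated foci exist. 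In that scenario $F$ may even be continuous at $x_*$ while $u$ fails to be differentiable there, so your contrapositive does not reach the claimed conclusion; and the full strength $u\in C^1$ is what Corollary~\ref{C:UAC} later uses to exclude precisely this degenerate picture, so it cannot be waved away.

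The paper's proof avoids this by never asking the foci of the limiting mountains to be distinct. It fixes two distinct \emph{extreme covectors} $p^*\ne q^*$ of $\partial u(x_*)$, notes (``whether or not these mountains are distinct'') that both $\cexp'_{x_*}p^*$ and $\cexp'_{x_*}q^*$ lie in $\partial^c u(x_*)$, and then invokes Theorem~\ref{T:second stab} to place the whole curve $\bar z(t')=\cexp_{x_*}[(1-t')p^*+t'q^*]$ inside $\partial^c u(x_*)$. The points $\bar z(1/4)\ne\bar z(3/4)$ are genuinely distinct interior points of $\bar N(x_*)$ (the open segment lies strictly inside the ball of radius $\pi$), so Proposition~\ref{P:sausage construction} can be applied in the degenerate configuration $x_0=x_1=x_*$, where \eqref{epsilon} permits $\epsilon$ to be sent to zero and \eqref{p=n condition} yields the contradiction. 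To repair your argument you would need to add exactly this branch: either show directly that the all-antipodal-extreme-points case cannot occur (which is essentially Corollary~\ref{C:UAC} and is circular here), or run the sausage argument at the single point $x_*$ with foci taken from the interior of the contact geodesic supplied by Theorem~\ref{T:second stab}. For cost (b) your reduction is sound, since the Lipschitz bound of Proposition~\ref{P:semi-convex reflector antenna} confines $\partial u(x_*)$ to a compact subset of the interior of $\dom\cexp_{x_*}$, where distinct covectors do give distinct foci.
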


\begin{proof}
Let $N \subset \Sphere^n \times \Sphere^n$ denote the set of points where $c$ is smooth,
and $\Delta = \Sphere^n \times \Sphere^n \setminus N$ its complement, the singular set:
\begin{itemize}
\item for cost (a), $\Delta= \{ (x, -x) \mid \hbox{$-x$ is the antipode of $x \in \Sphere^n$} \}$
\item for cost (b), $\Delta= \{ (x, x) \mid x \in \Sphere^n \}$.
\end{itemize}
Note the pseudo-Riemannian metric $h$ of \eqref{metric} is strictly regular on
$N$, and $N$ is bi-convex with respect to $h$. Moreover, this bi-convexity is $2$-uniform
(a fortiori strict) in the sense that $Dc(x,\bar N(x)) \subset T_x^* \Sphere^n$ is either
(a) a disc of radius $\pi = \diam \Sphere^n$ or (b) the entire cotangent space.

The form $F = \cexp \circ Du$ of the optimal map and Lipschitz potential
$u \in \mathcal{S}^{-c}_{\Sphere^n}$ can be found in (a) McCann \cite{McCann01},
where $u$ is semiconvex according to Cordero-Erausquin, McCann \& Schmuckenschl\"ager
\cite{CorderoMcCannSchmuckenschlager01}.  The same form can be deduced
for cost (b) analogously,  using the Lipschitz and semiconvexity from
Proposition \ref{P:semi-convex reflector antenna}.
To derive a contradiction,
suppose $u$ is not differentiable at some point $x \in \Sphere^n$.
Since $u$ is globally Lipschitz and semiconvex
this means the compact convex set $\partial u (x) \subset T^*_x \Sphere^n$
consists of more than one point.  Choose two distinct co-vectors
$p^* \ne q^*$ extremal in $\partial u (x)$.
Extremality yields sequences $x_k \to x$ and $y_k \to x$ in $\dom Du$
such that $Du(x_k) \to p^*$ and $Du(y_k) \to q^*$ by \S 25.6 \cite{Rockafellar72}.
Lemma \ref{L:supporting c-convex} asserts $(x_k,Du(x_k)) \in \cl \dom \cexp$
and provides a
supporting mountain $-c(\cdot, \bar x_k)+c(x_k,\bar x_k) + u(x_k)$ 
for $u$ at $x_k$ which is focused at $\bar x_k = \cexp_{x_k}' Du(x_k)$.
The limit $k \to \infty$ yields a mountain supporting $u$ at $x$ and focused
at $\bar x = \cexp_x' p^*$; for the same reason, another mountain supporting $u$ at $x$
is focused at $\bar y=\cexp_x' q^*$.
%
%
Whether or not these mountains are distinct,  a geodesic
$t' \in ]0,1[ \longrightarrow (x,\bar z(t'))$ in $\{x\} \times \bar N(x)$
which extends from $(x,\bar x)$ to $(x,\bar y)$
is defined by $\bar z(t') := \cexp_x[(1-t')p^* + t' q^*]$ according to
Lemma \ref{L:c-segments are geodesics}.
Applying Theorem~\ref{T:second stab} to (a) Example~\ref{E:sphere} and
(b) Example~\ref{E:reflector antenna} yields
$\bar z(t') \in  \partial^c u(x)$ for all $ t'\in [0,1]$.

To exploit Proposition~\ref{P:sausage construction},
first localize the domains to subdomains of $\R^n$ in the following way. From
the strict vertical convexity of $\Sphere^n\times \Sphere^n \setminus \Delta$,
there is a subdomain $\bar \Omega \subset\subset \bar N (x)$
containing $\{\bar z (t')\}_{t' \in [1/4, 3/4]}$.
Reparameterize this portion of the curve to get a geodesic
$t \in [0,1] \longrightarrow (x,\bar x(t))$ with distinct endpoints
$\bar x (0) = \bar z (1/4)$ and $\bar x(1) = \bar z (3/4)$.
Choose a coordinate neighborhood $\Omega \subset \Sphere^n$ of $x$ which corresponds
to a bounded domain $\Omega \subset \subset \R^n$; $\bar \Omega$ can be mapped to
bounded domain in $\R^n$ by using its diffeomorphic preimage under $\cexp_x$.

The rest of proof is an easy application of Proposition~\ref{P:sausage construction}
to this case $x_0 = x = x_1$ and $\bar x_0 = \bar x(0) \ne \bar x_1 = \bar x(1)$;
we employ the notation of that proposition.
Note that $u$ satisfies the hypothesis of Proposition~\ref{P:sausage construction}
according to 
Theorem~\ref{T:supporting c-convex}.

We apply Proposition~\ref{P:sausage construction} to
$\graph F \subset \partial^c u$.
First notice from \eqref{epsilon} that $\epsilon$ may
be chosen arbitrarily in the range $]0, C'' | \bar x_0 - \bar x_1|^2]$.
To compare the
volume of the ball $B_\epsilon(x)$ to the sausage
$\mathcal{N}^{\bar \Omega}_\delta (\{ \bar x(t), t \in [1/3, 2/3]\})$,
we repeat steps \eqref{sausage in the image of meat ball}--\eqref{volume compare}
from the proof of Theorem \ref{T:Hoelder} to obtain
\begin{align*}
m (C'''\epsilon)^{n-1} |\bar x_0 - \bar x_1|^{2n-1}
&=m \delta^{n-1} |\bar x_0 - \bar x_1| 
\\&\lesssim \rho[B_\epsilon(x)]
\\&= o(\epsilon^{n-1}) \qquad {\rm as}\ \epsilon \to 0,
\nonumber
\end{align*}
where the last equality is given by \eqref{p=n condition}.
The limit $\epsilon \to 0$ forces $\bar x_0=\bar x_1$ --- a contradiction!
We therefore conclude $u$ is differentiable,  which implies
$u \in C^1(\Sphere^n)$ by the closedness of $\partial u$ noted after (\ref{subdifferential}).
The continuity of $F=\cexp \circ Du$
follows immediately.
\end{proof}


\begin{corollary}\label{C:UAC}{\bf (Uniformly Away from Singular set \cite{DelanoeLoeper06})}
Assume the hypotheses of Theorem~\ref{T:C 0 sphere}, namely $(M=\Sphere^n,\dist,\vol)$ is the round
sphere and $F:\Sphere^n \longrightarrow \Sphere^n$ is an optimal map between a probability
measure $\rho$ satisfying (\ref{p=n condition}) and one satisfying
$d \bar \rho / d\vol \ge m>0$
for the (a) Riemannian distance squared or (b) negative logarithm of the Euclidean cost.
Let $N \subset \Sphere^n \times \Sphere^n$ be the set where $c$ is smooth.
There exists a constant $\delta = \delta(\rho,\bar \rho)>0$
such that $\rho$-a.e.\ point $x \in \Sphere^n$ satisfies
\begin{align}\label{N delta}
\dist(F(x),\Sphere^n \setminus \bar N(x)) \ge \delta.
\end{align}
\end{corollary}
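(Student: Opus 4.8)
The plan is to argue by contradiction using the continuity of $F$ already established in Theorem~\ref{T:C 0 sphere}, exactly reversing the logical order of \cite{DelanoeLoeper06}: rather than proving the stay-away estimate first and deducing continuity from it, I would deduce the stay-away estimate from continuity. First I would record the qualitative fact, due to Cordero-Erausquin, McCann \& Schmuckenschl\"ager \cite{CorderoMcCannSchmuckenschlager01} in case (a) and available analogously in case (b) from Proposition~\ref{P:semi-convex reflector antenna}, that the optimal map $F = \cexp\circ Du$ takes $\rho$-a.e.\ point $x$ to a point $F(x) = \cexp_x Du(x)$ lying in $\bar N(x)$; that is, $\rho$-a.e.\ $x$ has $(x,F(x))\in N$, so the left-hand side of \eqref{N delta} is strictly positive $\rho$-a.e. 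The content of the corollary is the \emph{uniform} lower bound.

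Next I would suppose, for contradiction, that no such $\delta>0$ exists. Then there is a sequence of points $x_k \in \Sphere^n$ (each in the full-measure set where $F$ is defined and $(x_k,F(x_k))\in N$) with $\dist(F(x_k), \Sphere^n\setminus \bar N(x_k)) \to 0$. By compactness of $\Sphere^n\times\Sphere^n$ we may pass to a subsequence so that $x_k \to x_\infty$ and $F(x_k) \to \bar y_\infty$; since $F$ is continuous on all of $\Sphere^n$ by Theorem~\ref{T:C 0 sphere}, in fact $\bar y_\infty = F(x_\infty)$. The distance condition passes to the limit: $\dist(F(x_\infty), \Sphere^n \setminus \bar N(x_\infty)) = 0$, i.e.\ $F(x_\infty)$ lies on the topological boundary of $\bar N(x_\infty)$ inside $\Sphere^n$, equivalently $(x_\infty, F(x_\infty)) \in \cl N \setminus N$ is a singular pair --- in case (a) the point $F(x_\infty)$ is antipodal to $x_\infty$, in case (b) it equals $x_\infty$. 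So it suffices to rule out the possibility that the continuous optimal map sends a point into the singular set $\Delta$.

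To close this gap I would invoke the structure of the potential and the support property once more. Since $u \in C^1(\Sphere^n)$ by Theorem~\ref{T:C 0 sphere}, the mountain $f(\cdot) = -c(\cdot,\bar x_\infty) + c(x_\infty,\bar x_\infty) + u(x_\infty)$ focused at $\bar x_\infty := F(x_\infty)$ supports $u$ to first order at $x_\infty$ (this is how $F$ was produced from $Du$ via the twist condition, cf.\ Remark~\ref{R:twist identifies focus} and Lemma~\ref{L:supporting c-convex}). If $(x_\infty,\bar x_\infty)\in\Delta$ then this ``mountain'' would have $f(x_\infty)=-\infty$ in case (b) --- impossible since $u(x_\infty)$ is finite --- or in case (a) would be focused at the cut point where $-c(\cdot,\bar x_\infty)$ fails to be $C^1$, so its gradient at $x_\infty$ cannot equal the finite covector $Du(x_\infty)$; in either case the identification $\bar x_\infty = \cexp_{x_\infty} Du(x_\infty) \in \bar N(x_\infty)$ forces $(x_\infty,\bar x_\infty)\in N$, contradicting $(x_\infty,\bar x_\infty)\in\cl N\setminus N$. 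Alternatively, and perhaps more cleanly, one observes that the full-measure set $S := \{x : (x,F(x))\in N\}$ together with continuity of $F$ and the fact that $N$ is open forces $S$ to be open; but then its complement is closed of zero $\rho$-measure, and the sequence $x_k\in S$ with $x_k\to x_\infty$, $(x_k,F(x_k))\to (x_\infty,F(x_\infty))$, would need $(x_\infty,F(x_\infty))\in \cl N$; the remaining task is to show it actually lands in $N$, which is the step using finiteness of $u$. I expect this final step --- excluding $F(x_\infty)\in\Delta$, i.e.\ showing a continuous optimal potential cannot develop a contact point on the singular set --- to be the main obstacle, and the cleanest route is via the finiteness/smoothness obstruction to a supporting mountain focused in $\Delta$, since $c(x,\bar x)\to+\infty$ (case (b)) or $-c(\cdot,\bar x)$ loses $C^1$ regularity (case (a)) exactly there, while $Du$ is a genuine finite continuous covector field. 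Having ruled this out, the contradiction is complete and \eqref{N delta} holds with some $\delta = \delta(\rho,\bar\rho)>0$.
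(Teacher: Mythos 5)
Your proposal is correct and follows essentially the same route as the paper: deduce the uniform stay-away estimate from the continuity of $F$ and the $C^1$-regularity of $u$ established in Theorem~\ref{T:C 0 sphere}, via compactness and a contradiction at a limiting singular contact point. The only differences are cosmetic: the paper dispatches case (b) directly from the uniform bound on $|Du|$ together with $|Dc(x,\bar x)|\to\infty$ as $\bar x\to x$ (no contradiction argument needed), and in case (a) it makes your ``gradient cannot match'' step precise by noting that the limiting supporting mountain focused at the antipode of $x_\infty$ is radially symmetric about $x_\infty$, so its subdifferential there is the full ball $\{p^*\in T^*_{x_\infty}\Sphere^n \mid |p^*|\le\pi\}\subseteq\partial u(x_\infty)$, contradicting $u\in C^1(\Sphere^n)$.
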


\begin{proof}
Since the map $F$ is unique (up to $\rho$-negligible sets) \cite{McCann01},
Theorem \ref{T:C 0 sphere} asserts it costs no generality to take
$F=\cexp \circ Du$ continuous on $\Sphere^n$ and $u^{c^*c} = u \in C^1(\Sphere^n)$.
Recall for cost (a) $\bar N(x) = \{\bar x \in \Sphere^n \mid \dist(x,\bar x) < \pi = \diam_{\Sphere^n} \Sphere^n \}$,
while for cost (b) $\bar N(x) = \{\bar x \in \Sphere^n \mid \bar x \ne x \}$.
In case (b) the desired estimate follows directly from the Lipschitz bound
on $Du$ provided by Proposition \ref{P:semi-convex reflector antenna},
since $|Dc(x,\bar x)| \to \infty$ as $\bar x \to x$.
If the corollary fails in case (a),  there is a sequence of points $x_k \in \Sphere^n$
with $\dist(F(x_k),\Sphere^n \setminus \bar N(x_k)) \to 0$ as $k \to \infty$.
Extracting a subsequential limit yields $x_k \to x \in \Sphere^n$ with
$F(x) \not \in \bar N(x) = \ran \cexp_x$ antipodal to $x$.  Due to the symmetry of the sphere,
the convex set $\partial u(x)$ cannot then consist of a single point of non-zero magnitude;
instead $\partial u(x)= \{p^* \in T^*_x \Sphere^n \mid |p^*| \le \pi \}$ contradicting the
claim $u \in C^1(\Sphere^n)$.
\end{proof}



Finally, we collect the ingredients above to conclude our direct proof of
(a) Loeper's result \cite{Loeper07p},  and (b) his analogous improvements of
Caffarelli, Guti\'errez \& Huang's result for the reflector
antenna \cite{CaffarelliGutierrezHuang}.

\begin{theorem}{\bf (H\"older continuity of optimal maps on the sphere
\cite{Loeper07p}, \cite{CaffarelliGutierrezHuang})}
\label{T:regular sphere}
As in Theorem~\ref{T:C 0 sphere}, assume $\rho$ and $\bar \rho$
are probability measures on the round sphere $(M=\Sphere^n,\dist,\vol)$
satisfying \eqref{p-condition} for some $p \in ]n,\infty]$
and $d\bar \rho(x) / d\vol \ge m>0$ for all $x\in \Sphere^n$.
For either cost
(a) $c(x,\bar x) = \dist^2(x,\bar x)/2$ or (b) $c(x,\bar x) = -\log|x-\bar x|$
the optimal map $F$ between $\rho$ and $\bar \rho$ lies in
$C^{1/5}\cup C^{(1-\frac{n}{p})/(4n-1-\frac{n}{p})}(\Sphere^n, \Sphere^n)$.
\end{theorem}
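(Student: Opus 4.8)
The plan is to bootstrap from the continuity results already in hand, reducing the problem to the Euclidean situation covered by Theorem~\ref{T:Hoelder}. Since $p>n$, condition \eqref{p-condition} implies \eqref{p=n condition}: writing $\rho(B_\epsilon(x))\le C(\rho)\epsilon^{n-n/p}=[C(\rho)\epsilon^{1-n/p}]\,\epsilon^{n-1}$, the bracketed factor tends to $0$ as $\epsilon\to 0$. Hence Theorem~\ref{T:C 0 sphere} applies to either cost and produces a continuous $c$-optimal map $F=\cexp\circ Du$ with $u\in C^1(\Sphere^n)\cap\mathcal{S}^{-c}_{\Sphere^n}$; this $u$ is globally Lipschitz and semiconvex --- for cost (a) by \cite{CorderoMcCannSchmuckenschlager01}, for cost (b) by Proposition~\ref{P:semi-convex reflector antenna} --- and it enjoys the ``first-order supporting mountain $\Rightarrow$ global supporting mountain'' property of Theorem~\ref{T:supporting c-convex}. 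Corollary~\ref{C:UAC} then supplies a constant $\delta_0=\delta_0(\rho,\bar\rho)>0$ with $\dist(F(x),\Sphere^n\setminus\bar N(x))\ge\delta_0$ for $\rho$-a.e.\ $x$, so that on a set of full $\rho$-measure $\graph F$ stays in a compact subset of the open set $N\subset\Sphere^n\times\Sphere^n$ on which $c$ is smooth --- the complement of the cut-locus in case (a), of the diagonal in case (b).

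Next I would localize. Fix $x_0\in\Sphere^n$ and set $\bar y_0:=F(x_0)$. Since $N$ is $2$-uniformly bi-convex for the pseudo-metric \eqref{metric} (as recorded in the proof of Theorem~\ref{T:C 0 sphere}), one can choose small geodesically convex neighbourhoods $x_0\in\Omega$ and $\bar y_0\in\bar\Omega$ so that $N':=\Omega\times\bar\Omega$ has compact closure in $N$ and is bi-convex, $c\in C^4(\cl N')$ is strictly regular and bi-twisted there (Examples~\ref{E:sphere} and~\ref{E:reflector antenna}), and --- after composing with the cost-exponential charts $\cexp_x$ and $\cdexp_{\bar x}$ --- the sets $\Omega$ and $\bar\Omega$ become bounded domains of $\R^n$ with $Dc(x,\bar\Omega)$ and $\bar Dc(\Omega,\bar x)$ convex. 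Crucially, the constants $r_0,C_1$ of Proposition~\ref{P:A3S and support} and the thresholds $C',C'',C'''$ of Proposition~\ref{P:sausage construction} may be taken uniform in $x_0$, as they depend only on $\|c\|_{C^4}$, the bi-convexity moduli of $N$, the lower bound $m$, the constant $C(\rho)$, and $\delta_0$ --- all fixed global data.

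With this in place, the argument of Theorem~\ref{T:Hoelder} runs essentially verbatim near $x_0$. Given distinct $x_0',x_1'$ close to $x_0$ with $(x_i',\bar x_i)\in\partial^c u\cap(\dom Du\times\dom\bar D\bar u)$, the $2$-uniform vertical convexity of $N$ joins $(x,\bar x_0)$ to $(x,\bar x_1)$ by a geodesic $t\in[0,1]\longrightarrow(x,\bar x(t))\in N'$ through the valley-of-indifference point $x$, Proposition~\ref{P:sausage construction} fits a $\delta$-sausage around the middle third of $\bar x(\cdot)$ inside $F(B_\epsilon(x))$ with $\epsilon\sim\bigl(|x_0'-x_1'|/|\bar x_0-\bar x_1|\bigr)^{1/2}$ and $\delta=C'''\epsilon|\bar x_0-\bar x_1|^2$, and $F_\#\rho=\bar\rho$ together with $d\bar\rho/d\vol\ge m$ gives
$$
m\,\delta^{n-1}\,|\bar x_0-\bar x_1|\;\lesssim\;\bar\rho\bigl(\mathcal N_\delta^{\bar\Omega}\bigr)\;=\;\rho\bigl(F^{-1}(\mathcal N_\delta^{\bar\Omega})\bigr)\;\le\;\rho\bigl(B_\epsilon(x)\bigr)\;\le\;C(\rho)\,\epsilon^{n-n/p},
$$
where $\mathcal N_\delta^{\bar\Omega}$ abbreviates $\mathcal N_\delta^{\bar\Omega}(\{\bar x(t):1/3\le t\le 2/3\})$. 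Solving for $|x_0'-x_1'|$ in terms of $|\bar x_0-\bar x_1|$ exactly as in Remark~\ref{R:rougher measures} yields $F\in C^{1/5}\cup C^{(1-n/p)/(4n-1-n/p)}$ on a neighbourhood of $x_0$ (for $n\ge 2$; the case $n=1$ is elementary since $\Sphere^1$ is one-dimensional). Finally, compactness of $\Sphere^n$ gives a finite subcover by such neighbourhoods; because the exponents coincide and the local constants are uniform, the local estimates patch --- along chains of overlapping charts, with the trivial bound $|F(x_0')-F(x_1')|\le\diam\Sphere^n$ absorbing well-separated pairs --- into a single global H\"older bound of the stated exponent on $(\Sphere^n,\Sphere^n)$. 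Cases (a) and (b) proceed in parallel; the only difference is that in (b) the separation in Corollary~\ref{C:UAC} is forced by the Lipschitz bound on $Du$ of Proposition~\ref{P:semi-convex reflector antenna} (which gives $\dist(F(x),x)\ge\delta_0$ since $|Dc(x,\bar x)|\to\infty$ as $\bar x\to x$).

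I expect the main obstacle to be the uniformity of the localization: one must check that the product neighbourhoods $\Omega\times\bar\Omega$ can be chosen, uniformly over $x_0\in\Sphere^n$, as bounded bi-convex Euclidean domains carrying a $C^4$ cost that is strictly regular and bi-twisted, so that none of the constants entering Propositions~\ref{P:A3S and support} and~\ref{P:sausage construction} degenerate. It is precisely the a priori bound $\delta_0>0$ of Corollary~\ref{C:UAC} that prevents degeneration as $(x_0,\bar y_0)$ approaches the singular set $\Sphere^n\times\Sphere^n\setminus N$, and hence makes the patched bound a genuine global H\"older estimate rather than a family of unrelated local ones.
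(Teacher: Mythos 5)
Your overall strategy --- continuity via Theorem~\ref{T:C 0 sphere}, the cut-locus bound of Corollary~\ref{C:UAC}, then a localization feeding into the volume-comparison argument of Theorem~\ref{T:Hoelder} --- is exactly the paper's, and your reduction of \eqref{p-condition} to \eqref{p=n condition} and your handling of the global-support property (which, as you note, no longer needs horizontal convexity once $u\in C^1(\Sphere^n)$) are correct. The gap is in the localization itself, which you rightly flag as ``the main obstacle'' but do not resolve. You take $\bar \Omega$ to be a \emph{small} neighbourhood of $\bar y_0=F(x_0)$ and assert that $\Omega\times\bar\Omega$ can be made bi-convex with uniform constants. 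But the step that actually matters --- that $\bar\Omega$ appears convex from every $x\in\Omega$, i.e.\ that $Dc(x,\bar\Omega)\subset T_x^*\Sphere^n$ is convex --- is not automatic for a small geodesic ball viewed from a distant point $x$: the set $\exp_x^{-1}(B_\eta(\bar y_0))$ is the image of a ball under a nonlinear map and its convexity is a genuinely nontrivial property of the round sphere, not something granted by shrinking $\eta$. Without it you have neither the vertical geodesic $t\mapsto(x,\bar x(t))$ staying in $N'$ nor the volume comparison \eqref{CN and N}, and the bound $\delta_0$ from Corollary~\ref{C:UAC} does not by itself supply it.

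The paper sidesteps this entirely by localizing the other way around: it fixes $z\in\Sphere^n$ and takes $\bar\Omega=B_R(\pm z)$ with $\pi-\delta<R<\pi$ --- a \emph{large} ball, nearly all of $\Sphere^n$ --- whose apparent convexity from $z$ is immediate because $Dc(z,B_R(\pm z))$ is a round ball of radius $R$ in $T^*_z\Sphere^n$ (indeed $2$-uniformly convex), and this persists for $x\in B_r(z)$ with $r$ small by compactness. Corollary~\ref{C:UAC} is then used not merely to keep $\graph F$ off the singular set but to guarantee the inclusion $F(B_r(z))\subset B_R(\pm z)$ whenever $r<\delta+R-\pi$, so no appeal to the (not yet quantified) modulus of continuity of $F$ is needed to confine the targets. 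With that choice the constants are uniform in $z$ by the symmetry of the sphere, and the rest of Theorem~\ref{T:Hoelder} runs as you describe. To repair your write-up, either adopt this large-ball localization or supply a proof that small geodesic balls appear convex from all nearby base points with uniformly controlled convexity moduli.
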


\begin{proof}
We first set up a domain $N$ to which we shall apply the proof of Theorem~\ref{T:Hoelder}.
Let $g_{ij}$ denote the standard metric tensor on the round sphere $\Sphere^n$.
For each $z \in \Sphere^n$, let $-z$ denote its antipodal point.
Observe that each $g$-geodesic ball $B_R (z)$
appears convex from $z \in \Sphere^n$ for the Riemannian distance squared cost (a),
and from $-z$ for the negative logarithmic cost (b); see
Definition \ref{D:c-convexity}.  In both cases,  the convexity is $2$-uniform
in the sense that $Dc(z, B_R(\pm z))$ is a $2$-uniform subset of $T_z \Sphere^n$.

Let $z$ be an arbitrary point in $\Sphere^n$ and
$\delta = \delta(\rho,\bar \rho)$ as in \eqref{N delta}.
Fix $\pi -\delta < R < \pi$. By compactness, we can choose $0< r < \delta + R -\pi$
sufficiently small that the set $N= B_r (z) \times B_R(\pm z)$
remains vertically-convex,  choosing the plus sign for cost (a) and the minus sign for cost (b).
If we use $g$-geodesic polar coordinates we can regard the geodesic balls as Euclidean balls
in the tangent spaces $T_z \Sphere^n$ and $T_{\pm z} \Sphere^n$.

We now apply the volume comparison argument of the proof of Theorem~\ref{T:Hoelder}
to the domain $N$. A few points should be clarified.
Horizontal convexity of $N$ was used in Theorem~\ref{T:Hoelder} only to be able to
apply Theorem~\ref{T:supporting c-convex}.  But in the present context,  we need not
apply Theorem~\ref{T:supporting c-convex} since we have its conclusion
for $u$ on the whole $\Sphere^n$ directly by combining Theorem~\ref{T:C 0 sphere} with
Lemma \ref{L:supporting c-convex}.  Also $F$ satisfies
$\{ (x, F(x)) \mid x \in B_r (z)\} \subset N$ by Corollary~\ref{C:UAC}.
The remainder of the proof of Theorem~\ref{T:Hoelder} goes through in the present case, yielding
$F \in C^{1/5}_\loc \cup C_{\mathop{\rm loc}}^{(1-\frac{n}{p})/(4n-1-\frac{n}{p})}(B_r(z),\Sphere^n)$.
This local estimate becomes global by compactness of $\Sphere^n$, so the proof is complete.
\end{proof}


\bibliography{../newbib}
\bibliographystyle{plain} 
\end{document}